\renewcommand{\epsilon}{\varepsilon}
\numberwithin{equation}{section}
\newtheoremstyle{thmlemcorr}{10pt}{10pt}{\itshape}{}{\bfseries}{.}{10pt}{{\thmname{#1}\thmnumber{ #2}\thmnote{ (#3)}}}
\newtheoremstyle{thmlemcorr*}{10pt}{10pt}{\itshape}{}{\bfseries}{.}\newline{{\thmname{#1}\thmnumber{ #2}\thmnote{ (#3)}}}
\newtheoremstyle{defi}{10pt}{10pt}{\itshape}{}{\bfseries}{.}{10pt}{{\thmname{#1}\thmnumber{ #2}\thmnote{ (#3)}}}
\newtheoremstyle{remexample}{10pt}{10pt}{}{}{\bfseries}{.}{10pt}{{\thmname{#1}\thmnumber{ #2}\thmnote{ (#3)}}}
\newtheoremstyle{ass}{10pt}{10pt}{}{}{\bfseries}{.}{10pt}{{\thmname{#1}\thmnumber{ A#2}\thmnote{ (#3)}}}
\theoremstyle{thmlemcorr}
\newtheorem{theorem}{Theorem}
\numberwithin{theorem}{section}
\newtheorem{lemma}[theorem]{Lemma}
\newtheorem{corollary}[theorem]{Corollary}
\newtheorem{proposition}[theorem]{Proposition}
\theoremstyle{thmlemcorr*}
\newtheorem{theorem*}{Theorem}
\newtheorem{lemma*}[theorem]{Lemma}
\newtheorem{corollary*}[theorem]{Corollary}
\newtheorem{proposition*}[theorem]{Proposition}
\newtheorem{problem*}[theorem]{Problem}
\newtheorem{conjecture*}[theorem]{Conjecture}
\theoremstyle{defi}
\newtheorem{definition}[theorem]{Definition}
\theoremstyle{remexample}
\newenvironment{remark}
  {\pushQED{\qed}\remarkx}
  {\popQED\endremarkx}
\newtheorem{example}[theorem]{Example}
\theoremstyle{ass}
\newcommand{\Gcal}{\mathcal{G}}
\newcommand{\Hcal}{\mathcal{H}}
\newcommand{\Scal}{\mathcal{S}}
\DeclareMathOperator{\diam}{diam}
\DeclareMathOperator{\dist}{dist}
\DeclareMathOperator{\supp}{supp}
\newcommand{\floor}[1]{\lfloor #1 \rfloor}
\newcommand{\normB}[1]{\Bigl\|#1\Bigr\|}
\newcommand{\abslr}[1]{\left|#1\right|}
\newcommand{\dprlr}[1]{\left\langle #1 \right\rangle}
\newcommand{\N}{\mathbb{N}}
\newcommand{\R}{\mathbb{R}}
\newcommand{\C}{\mathbb{C}}
\newcommand{\Sn}{\mathbb{S}^{n-1}}
\newcommand{\eps}{\epsilon}
\DeclareMathOperator{\Div}{div}
\def\XXint#1#2#3{{\setbox0=\hbox{$#1{#2#3}{\int}$}
\vcenter{\hbox{$#2#3$}}\kern-.5\wd0}}
\DeclarePairedDelimiter\abs{\lvert}{\rvert}
\DeclarePairedDelimiter{\norm}{\lVert}{\rVert}
\DeclarePairedDelimiter{\inner}{\langle}{\rangle}
\newcommand{\parent}[1]{\left( #1 \right)} 
\newcommand{\Rn}{\R^{n}}
\renewcommand{\phi}{\varphi}
\newcommand{\dy}{\, dy}
\newcommand{\weakto}{\rightharpoonup}
\def\XXint#1#2#3{{\setbox0=\hbox{$#1{#2#3}{\int}$}
     \vcenter{\hbox{$#2#3$}}\kern-.5\wd0}}
\g@addto@macro\bfseries{\boldmath}
\DeclareMathOperator{\diver}{div}
\DeclareMathOperator{\loc}{loc}
\renewcommand{\O}{\Omega}
\renewcommand{\a}{\alpha}
\renewcommand{\d}{\delta}
\newcommand{\s}{\sigma}
\newcommand{\p}{\partial}
\newcommand{\e}{\varepsilon}
\newcommand{\mc}{\mathcal}
\def\XXint#1#2#3{{\setbox0=\hbox{$#1{#2#3}{\int}$}
		\vcenter{\hbox{$#2#3$}}\kern-.5\wd0}}
\title[Nonlocal gradients]{Nonlocal gradients: fundamental theorem of calculus, Poincar\'e inequalities and embeddings}
\author{J. C. Bellido, C. Mora-Corral and H. Sch\"onberger}
\address[J. C. Bellido]{E.T.S.I.\ Industriales, Department of Mathematics, Universidad de Castilla-La Mancha,
13071-Ciudad Real, Spain.}
\email{JoseCarlos.Bellido@uclm.es}
\address[C. Mora-Corral]{Departamento de Matem\'aticas, Universidad Aut\'onoma de Madrid, 28049 Madrid, Spain and Instituto de Ciencias Matem\'aticas, CSIC-UAM-UC3M-UCM, 28049 Madrid, Spain.}
\email{carlos.mora@uam.es}
\address[H. Sch\"onberger]{Mathematisch-Geographische Fakult\"{a}t, Katholische Universit\"at Eichst\"att-Ingolstadt, 85072 Eichst\"att, Germany.}
\email{hidde.schoenberger@ku.de}
\begin{document}

\begin{abstract}
We address the study of nonlocal gradients defined through general radial kernels $\rho$.
Our investigation focuses on the properties of the associated function spaces, which
 depend on the characteristics of the kernel function.
Specifically, even with minimal assumptions on $\rho$, we establish Poincar\'e inequalities and compact embeddings into Lebesgue spaces.
Additionally, we present a fundamental theorem of calculus that enables us to recover a function from its nonlocal gradient through a convolution. This is used to demonstrate embeddings into Orlicz spaces and spaces of continuous functions that mirror the well-known Sobolev and Morrey inequalities for classical gradients.
Finally, we establish conditions for inclusions and equality of spaces associated to different kernels.

\end{abstract}

\keywords{Nonlocal gradients, Nonlocal Sobolev spaces, Fundamental theorem of calculus, Poincar\'e inequalities}

\subjclass{
Primary: 
26A33, 
46E35, 
47G20. 
Secondary:
42B35, 
74A70. 
}

\maketitle
\thispagestyle{empty}


\section{Introduction}

Many phenomena in nature, science and engineering are modeled with differential equations or local variational principles.
Locality in this context means that the behaviour of an object depends only on its immediate neighborhood.
However, there are situations where long-range interactions have to be taken into account.
This gives rise to nonlocal models involving integro-differential equations or integral operators.
The study of nonlocal models has proliferated in the last decades, as they provide effective ways to bridge between different length scales and lead to refined predictions.
Areas that have benefited from nonlocal modeling include materials science, diffusion processes, imaging and machine learning.

 An initial motivation for this work is \emph{peridynamics}, which is a new nonlocal approach to solid mechanics \cite{Silling2000} that has experienced huge progress and has led to a substantial literature; see, e.g., the books \cite{madenci_oterkus, Gerstle, BoFoGeSi17, DEDuGu19}, or the more mathematically-oriented articles \cite{BoBrMi2001, BeMC14, MeD16}.
Even though bond-based peridynamics, based on double-integral energies, is among the most widespread nonlocal models in mechanics, it was demonstrated in~\cite{BeCuMCBondBased, MeSc23} that it is incompatible with the classical model of nonlinear elasticity in the nonlocal-to-local limit.

To remedy this shortcoming of bond-based peridynamics, the authors of \cite{BeCuMC} adopted a model similar to the classical one (\cite{Ball77}) that involves, instead of the classical gradient, the \emph{Riesz fractional gradient} $D^s$ for $s\in (0,1)$, which
is defined for smooth functions $u : \Rn \to \R$ as
\begin{equation}\label{eq:Dsu}
 D^s u (x) = c_{n,s} \int_{\Rn} \frac{u(x)-u(y)}{|x-y|^{n+s}} \frac{x-y}{|x-y|} \, dy .
\end{equation}
%
%
The Riesz fractional gradient is a paradigmatic example of a nonlocal gradient.
It was Shieh \& Spector \cite{ShS2015, ShS2018} who brought it to the attention of the PDE and calculus of variations communities.
They introduced the function spaces associated to the Riesz gradient, with the key observation that they are equivalent to the \emph{Bessel potential spaces} $H^{s,p}(\R^n)$. Moreover, a series of useful inequalities and embeddings hold.
These fundamental insights laid the basis for an analysis of the equations and minimization problems related to the fractional gradient, which has led to the understanding of fractional function spaces \cite{Ponce16, COMI2019}, the existence of solutions in variational problems \cite{KrSc22, BeCuMC}, and the relationship with local models \cite{bellido2020convergence}.


A drawback of the Riesz fractional gradient for certain applications is the fact that it requires integration over the whole space for its computation. To be able to work on bounded domains, as is desirable, for instance, for realistic materials modeling, the previous approach was modified in~\cite{BeCuMo22, CuKrSc23} by incorporating a horizon parameter.
This was implemented by truncating the Riesz fractional gradient. 
To be more explicit, the nonlocal gradient 
for a fractional parameter $s \in (0,1)$, a horizon $\delta>0$ and an appropriate non-negative, smooth, radial cut-off function $w_{\delta}$ supported in the ball $B_{\delta}(0)$ 
is defined as
\begin{equation}\label{Ddeltas}
D^{s}_\delta u (x) = \int_{B_{\delta}(x)} \frac{u(x)-u(y)}{\abs{x-y}} \frac{x-y}{\abs{x-y}}\frac{w_{\delta}(x-y)}{\abs{x-y}^{n+s-1}}\,dy \quad \text{for $x \in \Omega$.}
\end{equation}
Note that $u$ needs to be defined in the larger region $\Omega_{\delta}=\{x \in \R^n\,:\, \dist
(x,\Omega) < \delta\}$. 
As in the case of Sobolev spaces (and in the fractional case above), there are two ways to define the space $H^{s,p,\delta} (\Rn)$ associated with nonlocal gradients: as a completion of the smooth compactly supported functions under the norm $\left\| u \right\|_{L^p (\Rn)} + \left\| D^s_\delta u \right\|_{L^p (\Rn)}$, and through a distributional definition, which is based on a suitable integration by parts formula.

 Analyses of the variational problems based on $D^s_\d u$ can be found in \cite{BeCuMo22, CuKrSc23, BellidoCuetoMoraCorral+2023, KrS24}.
Two techniques established in those papers are the \emph{translation method}~\cite{CuKrSc23, BeCuMo22} and the \emph{nonlocal fundamental theorem of calculus}~\cite{BeCuMo22}. The translation property is based on the observation that every nonlocal gradient is a classical gradient. Precisely, 
\begin{equation}\label{eq:translation}
D^s_\delta u= \nabla (Q_\delta^s\ast u),
\end{equation}  
where $Q_\delta^s$ is an integrable kernel supported in the ball $B_\delta(0)$. The nonlocal fundamental theorem of calculus refers to the representation formula
\begin{equation}\label{eq:NFTC}
 u = V_\delta^s \ast D_\delta^s u,  
\end{equation}
where $V_\delta^s$ is a locally integrable function implicitly given via Fourier transform. The identity \eqref{eq:NFTC} can be used to prove various embeddings and inequalities related to the nonlocal gradient $D^s_\d$ and the function spaces $H^{s,p,\d}(\R^n)$. Both \eqref{eq:translation} and \eqref{eq:NFTC} are modifications of analogous results in the fractional setting~\cite{ShS2015, Ponce16, COMI2019, KrSc22}. \smallskip

Expressions \eqref{eq:Dsu} and \eqref{Ddeltas} lead naturally to the central objects in this paper, which are general nonlocal gradients of the form 
\begin{equation*} \label{eq: general nonlocal gradient}
	D_\rho u(x)= \int_{\R^n} \frac{u(x)-u(y)}{|x-y|} \frac{x-y}{|x-y|} \rho(x-y)\, dy 
\end{equation*}
for some kernel $\rho$, typically with a singularity at the origin. 
Despite the fact that the axiomatic properties (invariance by translation and rotation, homogeneity and continuity \cite{Silhavy20}) of the Riesz gradient are desirable in many contexts, in some situations the use of other kernels $\rho$ presents advantages.
Perhaps the most relevant kernels for applications are those with compact support, since they allow for modeling phenomena in bounded domains.
We denote by $H^{\rho,p}(\R^n)$ the set of $L^p (\Rn)$ functions with an $L^p (\Rn)$ distributional nonlocal gradient, while, given an open $\O \subset \Rn$, the set $H^{\rho,p}_0(\Omega)$ comprises those $H^{\rho,p}(\R^n)$ functions vanishing in the complement of $\Omega$.


References on general nonlocal gradients are \cite{DGLZ,Delia, EGM22}, where vector calculus for nonlocal gradients is addressed, and \cite{MeS}, on localization properties of nonlocal gradients.
More precisely, this article benefits from the works \cite{LeDu20} for non-radial kernels, \cite{BeCuMo22, CuKrSc23} for the operator $D^s_{\d}$, and \cite{HaT23} for kernels supported in a half-ball.
In fact, while this article was being written, we became aware of the preprint \cite{Arr23}, which studies general nonlocal operators similar to ours, but focuses on different aspects: nonlocal-to-local estimates, localization, quasiconvexity and fine properties.


In this work we examine radial kernels, which model isotropic interactions, and its aim is to ascertain the assumptions on $\rho$ that are necessary to develop a satisfactory theory for nonlocal gradients and its associated function spaces.
In particular, we establish a set of hypotheses on $\rho$ for which the main structural properties of the Riesz gradient carry over to general gradients. This lays the basis for the study of PDEs and variational problems based on nonlocal gradients. \smallskip
%

We give here an overview of our main results. The first key property that we establish is an analogue of the translation method in \eqref{eq:translation}, that is, we identify a locally integrable function $Q_\rho$ such that
\begin{equation}\label{eq:rhotranslation}
D_\rho u = \nabla (Q_\rho * u) \quad \text{for $u \in C_c^{\infty}(\R^n)$},
\end{equation}
see~Proposition~\ref{pr:Qhat}. Beyond the simple representation that \eqref{eq:rhotranslation} provides, we use this formula to gain information about the operator $D_\rho$ from the Fourier perspective by studying the Fourier transform of $Q_\rho$. This enables us to prove Poincar\'{e} inequalities and compact embeddings for the spaces $H^{\rho,p}_0(\Omega)$ with $\Omega \subset \R^n$ open and bounded. Precisely, beyond technical assumptions on $\rho$, we show that if
\begin{equation}\label{eq:cond1}
\liminf_{x \to 0}\, \abs{x}^{n-1}\rho(x) >0,
\end{equation}
then there is a $C>0$ such that
\[
\norm{u}_{L^p(\O)} \leq C \norm{D_\rho u}_{L^p(\R^n,\R^n)} \quad \text{for all $u \in H^{\rho,p}_0(\O)$},
\]
whereas $H^{\rho,p}_0(\O)$ is compactly embedded into $L^p(\R^n)$ if 
\begin{equation}\label{eq:cond2}
\lim_{x \to 0}\, \abs{x}^{n-1}\rho(x) =\infty,
\end{equation}
see Theorem~\ref{th:l2bound} for the case $p=2$ and Theorem~\ref{th:lpbound} for the general case $p \in (1,\infty)$; the setting with $p \in (1,\infty)$ requires an additional smoothness assumption on $\rho$ in order to apply the Mihlin-H\"{o}rmander multiplier theorem. Remarkably, we show in Proposition~\ref{pr:Poincareconverse} that the conditions \eqref{eq:cond1} and \eqref{eq:cond2} are essentially optimal when $p=2$, which indicates that a singularity in the kernel $\rho$ is unavoidable.

Under additional assumptions on $\rho$, we can push the Fourier analysis of $D_\rho$ further and prove a nonlocal fundamental theorem of calculus as in \eqref{eq:NFTC},
\begin{equation}\label{eq:rhoNFTC}
 u = V_\rho \ast D_\rho u \quad \text{for all $u \in H^{\rho,p}_0(\O)$},  
\end{equation}
with a locally integrable function $V_\rho$ whose singularity at the origin is related to that of $\rho$, see~Theorem~\ref{th:NTFC} and Corollary~\ref{cor:ftoc}. As an application, we use \eqref{eq:rhoNFTC} to show embeddings of $H^{\rho,p}_0(\O)$ into Orlicz spaces and spaces with prescribed modulus of continuity, where the behavior of the Orlicz function and modulus of continuity are linked to the singularity of $\rho$ at the origin, cf.~Theorem~\ref{th:HLS} and Theorem~\ref{th:Morrey}. This provides a refinement and generalization of the fractional Sobolev and Morrey inequalities for $D^s$ and $D^s_\d$ \cite{ShS2015, BeCuMo22} that is not restricted to the scale of Lebesgue or H\"{o}lder spaces.

As is apparent from the previous paragraphs, not all results require the same assumptions on $\rho$.
In fact, during the development of the theory, we will increasingly impose more conditions on the kernel. 
We will see several examples of $\rho$ along the paper, but we anticipate a few of them for which the general theory holds:
\[
\frac{\chi(x)}{\abs{x}^{n+s-1}}, \qquad \frac{\chi(x) (- \log \abs{x})}{\abs{x}^{n+s-1}} , \qquad \frac{\chi(x)}{\abs{x}^{n+s-1} (- \log \abs{x})} , \qquad \frac{\chi(x)}{\abs{x}^{n+ s(\abs{x})-1}} ,
\]
with $\chi \in C_c^{\infty}(\R^n)$ a non-negative, non-zero, radial function with some weak property of being decreasing (and supported in the unit ball in the second and third examples), $0 < s < 1$ and, in the last example, $s : (0, \infty) \to (0,1)$ a smooth function.
All those kernels have compact support; we will also see that this can be assumed without loss of generality, since only the behaviour of $\rho$ near zero is relevant for the properties of the function spaces $H^{\rho,p} (\Rn)$ and $H^{\rho,p}_0(\O)$, see~Proposition~\ref{prop:carryover}.

We finish this introduction with the comment that the community has not reached a consensus on the name or the notation for the spaces related to nonlocal gradients.
In the introduction of \cite{campos2023unilateral}, there is an interesting discussion on the origin of the spaces $H^{s,p} (\Rn)$, which are commonly called \emph{Bessel potential spaces}, but for which the authors propose the name \emph{Lions-Calder\'on spaces}.
Of course, the spaces $H^{\rho,p}(\Rn)$ generalize Bessel potential spaces, which, in turn, are nonlocal versions of Sobolev spaces.
Actually, in the case of Bessel potential spaces, those are obtained through complex interpolation between Sobolev $W^{1,p}$ and Lebesgue $L^p$ spaces. Hence, an appropriate name for $H^{s,p}(\Rn)$ could be {\it fractional Sobolev spaces}, but this name is commonly reserved for the \emph{Gagliardo} (or \emph{Sobolev-Slobodeckij}) spaces $W^{s,p}(\Rn)$.
In contrast to $H^{s,p}(\Rn)$, Gagliardo spaces are obtained by real interpolation between Sobolev and Lebesgue spaces.
From our point of view, $H^{s,p}(\Rn)$ deserves the name of fractional Sobolev space in more right than $W^{s,p}(\Rn)$, as for the former there is a fractional differential object (the Riesz fractional gradient), whereas for the latter there is just a fractional seminorm.
Although $\rho$-\emph{Bessel space} could be a sensible name for $H^{\rho,p}(\Rn)$, we propose to call it \emph{$\rho$-nonlocal Sobolev space}, since there is no immediate connection with the Bessel potentials. 

The outline of this article is as follows.
In Section \ref{se:first}, we establish the basic assumptions on $\rho$ and properties of $D_{\rho}$ including the translation method of \eqref{eq:rhotranslation}. An analysis of the associated spaces $H^{\rho,p} (\Rn)$ and $H^{\rho,p}_0(\O)$ is performed in Section \ref{se:function}, providing a distributional definition of nonlocal gradients, a  Leibniz rule and density results. In addition, Proposition \ref{prop:carryover} establishes a simple sufficient condition for the equality of spaces associated to different kernels, and for carrying over Poincar\'e inequality from one gradient to the other. Section~\ref{sec:poinc} is devoted to the proof of the Poincar\'{e} inequalities and compact embeddings under the assumptions \eqref{eq:cond1} and \eqref{eq:cond2}. The nonlocal fundamental theorem of calculus as in \eqref{eq:rhoNFTC} is proven in Section~\ref{se:fundamental}, and the subsequent embeddings into Orlicz spaces and spaces of continuous function are presented in Section~\ref{se:embedding}. Finally, in Section~\ref{se:inclusion}, we establish conditions for inclusions and equality of spaces associated to different kernels, and show that the conditions in \eqref{eq:cond1} and \eqref{eq:cond2} are almost optimal in order to have Poincar\'{e} inequalities and compact embeddings in $L^2$, respectively.

\section*{Notation}

We fix the dimension $n \in \N$ of the space and an open set $\O \subset \Rn$. The vectors of the canonical basis of $\Rn$ are $e_j$, $j = 1, \ldots, n$. The characteristic function of $A \subset \Rn$ is denoted by $\mathbbm{1}_A$. The complement of a subset $A$ in $\Rn$ is denoted by $A^c$, its closure by $\bar{A}$ and its boundary by $\p A$.
We write $B_r (x)$ for the open ball centred at $x \in \Rn$ of radius $r >0$.
We also set $B_r = B_r (0)$, $\Sn = \p B_1$ and $\Sn_+ = \{ z \in \Sn : z_1 > 0 \}$.
The surface area in integrals is indicated by $\mathcal{H}^{n-1}$, while we set $\s_{n-1} = \mathcal{H}^{n-1} (\Sn)$.

We will use an exponent $p \in [1, \infty]$ of integrability; its conjugate exponent $p' = \frac{p}{p-1}$.
The notation for Lebesgue $L^p$ and Sobolev $W^{1,p}$ spaces is standard.
So is the notation for functions that are of class $C^k$, for $k$ an integer or infinity.
Their version of compact support are $C^k_c$.
The support of a function is indicated by $\supp$. 
We will indicate the domain and target of the functions, as in $L^p (\O, \Rn)$.
The target is omitted if it is $\R$. We will use the abbreviation \emph{a.e.} for \emph{almost everywhere} or \emph{almost every}. For $\a \in \N^n$, we give the standard meaning to the partial derivative $\p^{\a}$ and the size $|\a|$; see \cite[Sect.\ 2.2]{Gra14a}. The operation of convolution is denoted by $*$.

The convention for the Fourier transform of a function $f \in L^1 (\Rn)$ is
\begin{equation*}
\widehat{f}(\xi)=\int_{\Rn} f(x) \, e^{-2\pi i x \cdot \xi} \, dx .
\end{equation*}
This definition is extended by continuity and duality to other function and distribution spaces.
The Schwartz space is denoted by $\mathcal{S}$ and the space of tempered distributions by $\mathcal{S}'$.
The variable in the Fourier space is generically taken to be $\xi$.
The inverse Fourier transform is denoted by $f^{\vee}$.

A function $f:\Rn \rightarrow \R$ is \emph{radial} if there exists $\bar{f}:[0,\infty) \to  \R$ (the \emph{radial representation} of $f$) such that $f(x)=\bar{f}(|x|)$ for every $x \in \Rn$.
A radial function $f:\Rn \rightarrow \R$ is \emph{radially decreasing} if its radial representation is decreasing.
A function $\phi:\Rn \rightarrow \Rn$ is \emph{vector radial} if there exists $\bar{\phi}:[0,\infty)\to  \R$ such that $\phi(x)=\bar{\phi}(|x|)x$ for every $x \in \Rn$.
	

The words \emph{increasing} and \emph{decreasing} are meant in their wide (not strict) sense.
In contrast, we use \emph{positive} and \emph{negative} with their strict meaning.

From Section~\ref{se:fundamental}, we will use the notion of \emph{almost increasing} and \emph{almost decreasing}.
A function $f : I \to \Rn$ is almost increasing in the interval $I$ if there exists a $C>0$ such that for any $x_1, x_2 \in I$ with $x_1 \leq x_2$, we have $f(x_1) \leq C f (x_2)$.
An analogous definition is given for almost decreasing. We will denote by $C, C_k\ldots$ generic positive constants, which may vary from line to line.

For convenience of the reader, we collect here the assumptions made on the radial kernel $\rho:\Rn \to [0,\infty]$ that appear throughout the paper:
\begin{enumerate}[label = (H\arabic*)]
 
\item  The function $f_\rho:(0,\infty)\to \R, \ t \mapsto t^{n-2}\overline{\rho}(t)$ is decreasing, and there is a $0<\mu<1$ such that $\mu f_\rho(t/2) \geq  f_\rho(t)$ for $t \in (0,\epsilon)$;

\item  the function $f_\rho$ is smooth in $(0,\infty)$, and for $t \in (0,\epsilon)$,
\[
-C \frac{d}{dt}f_\rho(t) \geq \frac{f_\rho(t)}{t} \quad \text{and} \quad \abslr{\frac{d^k}{dt^k}f_\rho(t)} \leq C_k \frac{f_\rho(t)}{t^k} \quad \text{for $k \in \N$};
\]

\item  the function $g_\rho:(0,\infty) \to \R, \ t \mapsto t^{n+\sigma-1}\overline{\rho}(t)$ is almost decreasing on $(0,\epsilon)$ for some $\sigma \in (0,1)$; 

\item  the function $h_\rho:(0,\infty) \to \R, \ t \mapsto t^{n+\gamma-1}\overline{\rho}(t)$ is almost increasing on $(0,\epsilon)$ for some $\gamma \in (0,1)$.
\end{enumerate}

\section{First properties of $\mathcal{G}_{\rho}$}\label{se:first}

In this section we show some principal properties of the $\rho$-derivative $\mathcal{G}_{\rho} u$ and its Fourier transform for $u \in C^{\infty}_c (\Rn)$.

We always make the following basic assumptions on $\rho$:
\begin{enumerate}[label = (H\arabic*)]
\setcounter{enumi}{-1}
\item \label{itm:h0} $\begin{cases} \text{$\rho:\R^n \to [0,\infty]$ is radial with $\rho (x) \in \R$ for all $x \in \Rn \setminus \{ 0 \}$,} \\
\text{$\rho \in L^1_{\loc}(\R^n)$ with $\displaystyle \int_{\R^n} \min\{1,\abs{x}^{-1}\}\rho(x)\,dx < \infty$,} \\
\text{$\displaystyle \inf_{B_{\epsilon}}\rho >0$ for some $\epsilon>0$.} \end{cases}$
\end{enumerate}
Similar assumptions to \ref{itm:h0} have appeared in the literature; e.g., \cite{HaT23}.
Note that we consider $\rho$ as a function everywhere defined in $\Rn$, and not as an equivalence class of functions defined a.e.

Clearly, \ref{itm:h0} implies that
\begin{equation}\label{eq:H0cor}
 \int_{B_r} \rho (x) \, dx + \int_{B_r^c} \frac{\rho (x)}{\abs{x}} \, dx < \infty , \qquad r > 0 ,
\end{equation}
which, in terms of the radial representation $\overline{\rho}$, can be written as
\begin{equation}\label{eq:H0corbarrho}
 \int_0^r \overline{\rho} (t) t^{n-1} \, dt + \int_r^{\infty} \overline{\rho} (t) t^{n-2} \, dt < \infty , \qquad r > 0 .
\end{equation}
In fact, under the assumption \ref{itm:h0}, we have the equivalence
\begin{equation}
\begin{split}\label{eq:integrabilityQL1}
 \rho \in L^1 (\Rn) &\iff \int_r^{\infty} \overline{\rho} (t) t^{n-1} \, dt < \infty \text{ for some } r > 0\\
 & \iff \int_r^{\infty} \overline{\rho} (t) t^{n-1} \, dt < \infty \text{ for all } r > 0 .
 \end{split}
\end{equation}

\begin{example}\label{ex:H0}
Classes of kernels $\rho$ satisfying \ref{itm:h0} are:
\begin{enumerate}[label=(\alph*)]
\item\label{item:Riesz} Given $s \in (0,1)$,
\begin{equation}\label{eq:Rieszkernel}
 \rho (x) = \frac{1}{\abs{x}^{n+s-1}} .
\end{equation}

\item Given a continuous $s: [0, \infty) \to (0,1)$ with $\inf_{[0,\infty)} s>0$,
\[
 \rho (x) = \frac{1}{\abs{x}^{n + s(\abs{x}) - 1}} .
\]

\item Given $0 \leq \alpha < n$ and $\beta > n-1$,
\[
 \rho (x) = \frac{\mathbbm{1}_{B_1}(x)}{\abs{x}^{\alpha}} +  \frac{\mathbbm{1}_{B_1^c}(x)}{\abs{x}^{\beta}} .
\]


\item If $\rho$ satisfies \ref{itm:h0} and $\chi \in L^{\infty} (\Rn)$ is radial with $\chi \geq 0$ and $\inf_{B_{\e}} \chi > 0$ then $\chi \rho$ satisfies \ref{itm:h0}.

\item If $\rho_1, \rho_2$ satisfy \ref{itm:h0} and $\alpha_1, \alpha_2>0$ then $\alpha_1 \rho_1 + \alpha_2 \rho_2$ satisfies \ref{itm:h0}.

\item If $\rho_1, \rho_2$ satisfy \ref{itm:h0} then any measurable radial $\rho$ with $\rho_1 \leq \rho \leq \rho_2$ satisfies \ref{itm:h0}.

\end{enumerate}
\end{example}

\begin{definition} \label{def:nl gradient}
For $u\in C_c^\infty(\Rn)$, we define the nonlocal gradient of $u$ as
\begin{equation}\label{eq:Grux}
 \mathcal{G}_\rho u(x)=\int_{\R^n} \frac{u(x)-u(y)}{\abs{x-y}}\frac{x-y}{\abs{x-y}}\rho(x-y)\,dy , \qquad x \in \Rn .
\end{equation}
\end{definition}

The following result shows that the nonlocal gradient defines an integrable and bounded function, which can be deduced from the more general statement in~\cite[Prop.\ 1]{EGM22}, but we provide the details for the reader's convenience.

\begin{lemma}\label{le:GuL1Linfty}
Let $u \in C^{\infty}_c (\Rn)$.
Then $\mathcal{G}_{\rho} u \in L^1 (\Rn,\Rn) \cap L^{\infty} (\Rn,\Rn)$ and the integral \eqref{eq:Grux} is absolutely convergent for each $x \in \Rn$.
\end{lemma}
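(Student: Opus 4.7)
The plan is to bound the integrand $\frac{|u(x)-u(y)|}{|x-y|}\rho(x-y)$ in two complementary regimes, exploiting the mean value theorem for small displacements and the boundedness of $u$ for large displacements, and then to read off the $L^\infty$ bound and the $L^1$ bound from assumption \ref{itm:h0}.

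After the change of variables $z=x-y$, the integral defining $\mathcal{G}_\rho u(x)$ has absolute value at most
\[
\int_{\Rn} \frac{|u(x)-u(x-z)|}{|z|}\rho(z)\,dz.
\]
Splitting the $z$-integral at $|z|=1$, I would use the uniform Lipschitz bound $|u(x)-u(x-z)|\leq \|\nabla u\|_{L^\infty}|z|$ on $B_1$ and the trivial bound $|u(x)-u(x-z)|\leq 2\|u\|_{L^\infty}$ on $B_1^c$. This yields, uniformly in $x$,
\[
|\mathcal{G}_\rho u(x)|\leq \|\nabla u\|_{L^\infty}\int_{B_1}\rho(z)\,dz+2\|u\|_{L^\infty}\int_{B_1^c}\frac{\rho(z)}{|z|}\,dz,
\]
and both integrals are finite by \eqref{eq:H0cor}. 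This simultaneously establishes absolute convergence of the integral defining $\mathcal{G}_\rho u(x)$ for every $x\in\Rn$ and the bound $\mathcal{G}_\rho u\in L^\infty(\Rn,\Rn)$.

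For the $L^1$ bound, I would apply Tonelli's theorem to interchange integrals and then estimate the inner $L^1$-norm of a finite difference in two ways: by the fundamental theorem of calculus, $\|u(\cdot)-u(\cdot-z)\|_{L^1}\leq |z|\,\|\nabla u\|_{L^1}$, and by the triangle inequality, $\|u(\cdot)-u(\cdot-z)\|_{L^1}\leq 2\|u\|_{L^1}$. Applying the first bound for $|z|\leq 1$ and the second for $|z|>1$ gives
\[
\int_{\Rn}|\mathcal{G}_\rho u(x)|\,dx\leq \|\nabla u\|_{L^1}\int_{B_1}\rho(z)\,dz+2\|u\|_{L^1}\int_{B_1^c}\frac{\rho(z)}{|z|}\,dz,
\]
which is finite by \eqref{eq:H0cor}, proving $\mathcal{G}_\rho u\in L^1(\Rn,\Rn)$.

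There is no real obstacle here: the only point of care is justifying the use of Tonelli, which is immediate once pointwise absolute convergence of the defining integral is in hand, and verifying that the two integrals of $\rho$ appearing in the estimates are exactly the ones controlled by the integrability hypothesis in \ref{itm:h0}.
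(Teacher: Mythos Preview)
Your proof is correct. The $L^\infty$ part is essentially identical to the paper's: both bound the difference quotient by a Lipschitz constant on $B_1$ and by $2\|u\|_\infty/|z|$ on $B_1^c$, then invoke \ref{itm:h0}.

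For the $L^1$ part you take a different route. The paper argues via the compact support of $u$: since $\mathcal{G}_\rho u$ is already bounded, it suffices to check integrability on $K_\delta^c$ where $K=\supp u$; there $u(x)=0$, so the integrand reduces to $\int_K \frac{|u(y)|}{|x-y|}\rho(x-y)\,dy$, and a Fubini swap against $\int_{B_\delta^c}\rho(z)/|z|\,dz$ finishes. Your argument instead applies Tonelli globally and uses the translation estimate $\|u-u(\cdot-z)\|_{L^1}\le |z|\,\|\nabla u\|_{L^1}$ together with the trivial bound $2\|u\|_{L^1}$. Your approach is arguably cleaner because it mirrors the $L^\infty$ argument exactly, replacing sup-norm bounds by $L^1$ bounds, and it yields an explicit estimate of $\|\mathcal{G}_\rho u\|_{L^1}$ in terms of $\|u\|_{W^{1,1}}$ (foreshadowing Proposition~\ref{prop: W1p embed}). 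The paper's approach, on the other hand, makes the role of compact support more visible and avoids the fundamental theorem of calculus step for the $L^1$ translation bound.
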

\begin{proof}
Let $L>0$ be a Lipschitz constant of $u$, then we can bound
\begin{align*}
 \int_{\R^n} \left| \frac{u(x)-u(y)}{\abs{x-y}}\frac{x-y}{\abs{x-y}}\rho(x-y) \right| dy &\leq \max\{L,2\norm{u}_\infty\}\int_{\R^n} \min\{1,\abs{x-y}^{-1}\}\rho(x-y)\,dy\\
 &= \max\{L,2\norm{u}_\infty\} \int_{\R^n} \min\{1,\abs{z}^{-1}\}\rho(z)\,dz.
\end{align*}
Thus, $\mathcal{G}_{\rho} u$ is bounded thanks to \ref{itm:h0}.

Next, let $K = \supp u$, $\d>0$, and define $K_{\d} := K + B_{\d}$. In order to prove that $\mathcal{G}_{\rho} u \in L^1 (\Rn,\Rn)$, it is enough to show that $\mathcal{G}_{\rho} u \in L^1 (K_{\d}^c,\Rn)$. For $x \notin K_{\d}$ we have
\[
 \abs{\mathcal{G}_{\rho} u (x)} \, dx \leq \int_K \frac{\abs{u(y)}}{\abs{x-y}}\rho(x-y) \, dy ,
\]
so
\[
 \int_{K_{\d}^c} \abs{\mathcal{G}_{\rho} u (x)} \, dx \leq \int_K \abs{u(y)} \int_{K_{\d}^c} \frac{1}{\abs{x-y}}\rho(x-y) \, dx \, dy \leq \norm{u}_{1} \int_{B_{\d}^c} \frac{\rho (z)}{\abs{z}} \, dz,
\]
which is finite thanks to \eqref{eq:H0cor}.
\end{proof}

Define $\lambda_\rho : \Rn \to \C^n$ as
\[
 \lambda_\rho(\xi) := \int_{\R^n} \frac{\rho(x)x}{\abs{x}^2}(e^{2\pi i \xi \cdot x}-1)\,dx .
\]
Similarly as in \cite[Lemma~2.13]{HaT23} (see also \cite[Lemma~1.2]{LeDu20}), the following result shows that $\lambda_\rho$ is the Fourier multiplier associated to the operator $\Gcal_\rho$.

\begin{lemma}\label{le:lamda}
If $u \in C^{\infty}_c (\Rn)$ then
\begin{equation}\label{eq:Gmultiplier}
 \widehat{\Gcal_\rho u} = \lambda_{\rho} \widehat{u}  .
\end{equation}
Moreover, 
\[
\lambda_\rho(\xi) = \frac{i \xi}{\abs{\xi}}\int_{\R^n}\frac{\rho(x)x_1}{\abs{x}^2}\sin(2\pi\abs{\xi}x_1)\,dx , \qquad \xi \in \Rn \setminus \{ 0 \}
\]
and $\abs{\lambda_\rho(\xi)}<\infty$ for all $\xi \in \Rn$.
\end{lemma}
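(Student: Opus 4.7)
The strategy is the standard one for translation-invariant operators: rewrite $\mathcal{G}_\rho u$ as a difference-convolution and push the Fourier transform through. Starting from Definition~\ref{def:nl gradient}, I would change variables $z=x-y$ to rewrite
\[
\mathcal{G}_\rho u(x) = \int_{\Rn} \frac{\rho(z)\,z}{\abs{z}^2}\bigl(u(x)-u(x-z)\bigr)\,dz.
\]
Applying the Fourier transform in $x$ and exchanging the order of integration (discussed below) gives
\[
\widehat{\mathcal{G}_\rho u}(\xi) = \widehat{u}(\xi)\int_{\Rn}\frac{\rho(z)\,z}{\abs{z}^2}\bigl(1-e^{-2\pi i z\cdot\xi}\bigr)\,dz,
\]
and the substitution $z\mapsto -z$ (using that $\rho$ is radial) turns the bracketed factor into $e^{2\pi i z\cdot\xi}-1$ while flipping the sign of the vector $z$, producing exactly $\lambda_\rho(\xi)\,\widehat{u}(\xi)$, which establishes~\eqref{eq:Gmultiplier}.

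The exchange of integrals must be justified by an absolute-integrability bound. Using Lipschitz continuity of $u$ for $\abs{z}\le 1$ and $\abs{u(x)-u(x-z)}\le 2\norm{u}_\infty \mathbbm{1}_{\supp u \cup (\supp u+z)}$ for $\abs{z}>1$, the double integral is controlled by
\[
C\int_{\Rn}\min\{1,\abs{z}^{-1}\}\rho(z)\,dz,
\]
which is finite by hypothesis~\ref{itm:h0}. This is essentially the same estimate that was used in Lemma~\ref{le:GuL1Linfty}.

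To obtain the alternative formula, I would split $e^{2\pi i x\cdot\xi}-1=(\cos(2\pi\xi\cdot x)-1)+i\sin(2\pi\xi\cdot x)$ in the definition of $\lambda_\rho$. The integrand associated to the cosine part is odd in $x$ (the vector factor $x/\abs{x}^2$ is odd while $\rho$ and $\cos(2\pi\xi\cdot x)-1$ are even), so that contribution vanishes, leaving
\[
\lambda_\rho(\xi)=i\int_{\Rn}\frac{\rho(x)\,x}{\abs{x}^2}\sin(2\pi\xi\cdot x)\,dx.
\]
For $\xi\neq 0$, pick a rotation $R$ with $R\xi=\abs{\xi}e_1$ and change variables $x=R^{T}y$. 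Since $\rho$ is radial the kernel is unchanged and $\xi\cdot x=\abs{\xi}y_1$, so the integral equals $R^{T}$ applied to $\int_{\Rn}\frac{\rho(y)\,y}{\abs{y}^2}\sin(2\pi\abs{\xi}y_1)\,dy$. The components $y_k$ with $k\ge 2$ integrate to zero by oddness in $y_k$, leaving only the first-coordinate term; since $R^{T}e_1=\xi/\abs{\xi}$, this gives the claimed formula.

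For the finiteness of $\abs{\lambda_\rho(\xi)}$, the plan is the elementary bound $\abs{e^{2\pi i\xi\cdot x}-1}\le \min\{2\pi\abs{\xi}\abs{x},2\}$, which splits the integral over $B_1$ and $B_1^c$ into $2\pi\abs{\xi}\int_{B_1}\rho(x)\,dx+2\int_{B_1^c}\rho(x)\abs{x}^{-1}\,dx$; both terms are finite by \eqref{eq:H0cor}. The only non-routine point in the whole argument is the Fubini justification, and that reduces to the same integrability of $\min\{1,\abs{x}^{-1}\}\rho$ that is already built into~\ref{itm:h0}.
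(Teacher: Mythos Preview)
Your proposal is correct and follows essentially the same route as the paper: rewrite $\mathcal{G}_\rho u$ via the substitution $z=x-y$, justify Fubini through the $\min\{1,\abs{z}^{-1}\}\rho(z)$ bound from~\ref{itm:h0}, then split off the cosine part by oddness and reduce to the $e_1$-direction by rotation invariance. The only cosmetic differences are that the paper first establishes the covariance $\lambda_\rho(R\xi)=R\lambda_\rho(\xi)$ and then specializes, whereas you change variables inside the integral directly; and the paper notes explicitly that the cosine integral is absolutely convergent before invoking oddness, which you might state as well.
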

\begin{proof}
Thanks to Lemma \ref{le:GuL1Linfty}, $\Gcal_\rho u \in L^1 (\R^n,\Rn)$ and the following calculation is valid for each $\xi \in \Rn$:
\begin{align*}
 \widehat{\Gcal_\rho u} (\xi) & = \int_{\Rn} \int_{\Rn} \frac{u(x)-u(y)}{\abs{x-y}}\frac{x-y}{\abs{x-y}}\rho(x-y) e^{-2\pi i x \cdot \xi} \, dy \, dx \\
 & = - \int_{\Rn} \int_{\Rn} \frac{u(y - z) - u(y)}{\abs{z}}\frac{z}{\abs{z}}\rho(z) e^{-2\pi i (y - z) \cdot \xi}  \, dy \, dz \\
 & = - \int_{\Rn} \left[ \int_{\Rn} u(y - z)  e^{-2\pi i (y - z) \cdot \xi} \, dy - e^{2\pi i z \cdot \xi} \int_{\Rn} u(y) e^{-2\pi i y \cdot \xi} \, dy \right] \frac{z}{\abs{z}^2} \rho(z) \, dz \\
  & = - \int_{\Rn} \left[  \widehat{u}(\xi) - e^{2\pi i z \cdot \xi} \widehat{u}(\xi) \right] \frac{z}{\abs{z}^2} \rho(z) \, dz \\
  & = \widehat{u}(\xi) \int_{\Rn} \left( e^{2\pi i z \cdot \xi} - 1 \right) \frac{z}{\abs{z}^2} \rho(z) \, dz ,
\end{align*}
so \eqref{eq:Gmultiplier} is proved.
The alternative expression for $\lambda_{\rho}$ is obtained as follows.
The integral 
\[
  \int_{\R^n} \frac{\rho(x)x}{\abs{x}^2}( \cos( 2\pi \xi \cdot x)-1)\,dx
\]
is zero since it is absolutely convergent with an odd integrand.
Therefore,
\[
 \lambda_\rho(\xi) = i \int_{\R^n}\frac{\rho(x)x}{\abs{x}^2}\sin(2\pi \xi \cdot x)\,dx.
\]
Now, as $\rho$ is radial, if $R \in \mathrm{SO}(n)$ is a rotation then
\[
 \lambda_\rho(R \xi) = i \int_{\R^n}\frac{\rho(x)x}{\abs{x}^2}\sin(2\pi R \xi \cdot x)\,dx = i \int_{\R^n} \frac{\rho(x)x}{\abs{x}^2}\sin(2\pi \xi \cdot R^T x)\,dx =R \lambda_\rho(\xi) .
\]
Now, given $\xi \in \Rn$, choose $R \in \mathrm{SO}(n)$ such that $R \xi = \abs{\xi} e_1$.
Then
\begin{align*}
 \lambda_\rho(\xi) & = R^T \lambda_\rho(\abs{\xi} e_1) = i R^T \int_{\R^n}\frac{\rho(x)x}{\abs{x}^2}\sin(2\pi \abs{\xi} x_1) \,dx = i R^T \int_{\R^n}\frac{\rho(x)x_1 e_1 }{\abs{x}^2}\sin(2\pi \abs{\xi} x_1) \,dx \\
 & = \frac{i \xi}{\abs{\xi}} \int_{\R^n}\frac{\rho(x)x_1}{\abs{x}^2}\sin(2\pi\abs{\xi}x_1)\,dx ,
\end{align*}
as desired.

Finally, using the bound $\abs{\sin t} \leq \min \{ 1, \abs{t}\}$ ($t \in \R$), we find that
\[
 \abs{\lambda_\rho(\xi)} \leq \int_{\R^n} \frac{\rho(x)}{\abs{x}} \abs{\sin(2\pi\abs{\xi}x_1)} \,dx \leq \int_{\R^n} \rho(x) \min \{ \abs{x}^{-1}, 2\pi\abs{\xi} \} \,dx < \infty ,
\]
in light of \eqref{eq:H0cor}.
\end{proof}

Now we employ a strategy described in \cite[Prop.\ 4.3]{BeCuMo22} (itself based on \cite[Lemma 15.9]{Ponce16}) consisting of studying a potential of $x\mapsto - \frac{\overline{\rho}(\abs{x})}{\abs{x}} \frac{x}{\abs{x}}$.
For this, we define the function
\[
 Q_{\rho}(x) := \int_{\abs{x}}^\infty \frac{\overline{\rho}(t)}{t}\,dt , \qquad x \in \Rn \setminus \{ 0 \},
\]
which is well defined and finite due to \eqref{eq:H0corbarrho}. We present some of its immediate properties.

\begin{lemma}\label{le:Q}
The following statements hold:
\begin{enumerate}[label = (\roman*)]
\item\label{item:Q1} For each $0 < a < b$ we have
\[
 Q_{\rho} \in W^{1,1} (B_b \setminus B_a) , \quad Q_{\rho} \in L^1 (B_b) \quad \text{and} \quad \nabla Q_{\rho} \in L^1 (B_a^c,\R^n),
\]
and for a.e.\ $x \in \Rn \setminus\{ 0 \}$,
\[
\nabla Q_{\rho}(x) = - \frac{\overline{\rho}(\abs{x})}{\abs{x}} \frac{x}{\abs{x}}.
\]

\item\label{item:Q2}
For every $M>0$ there is a $C=C(n,M)>0$ such that
\[
 \abs{Q_\rho(x)} \leq \frac{C}{\abs{x}^{n-1}} \quad \text{for $\abs{x} \geq M$}.
\]

\item\label{item:Q3}
If $\rho \in L^1 (\Rn)$ then $Q_{\rho} \in L^1 (\Rn)$.
Moreover, when $\rho$ has compact support then $Q_\rho$ lies in $L^1(\R^n)$ and also has compact support.

\end{enumerate}
\end{lemma}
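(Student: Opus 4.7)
The plan is to work throughout with the radial profile $\bar{Q}_\rho(r)=\int_r^\infty\bar\rho(t)/t\,dt$, which is well defined and finite for each $r>0$ by \eqref{eq:H0corbarrho}, and reduce every claim to integration in polar coordinates combined with Fubini. The one ingredient I will invoke repeatedly is the elementary inequality $1/t\leq t^{n-2}/r^{n-1}$ for $t\geq r$, which controls the tail $\int_r^\infty\bar\rho(t)/t\,dt$ by $r^{-(n-1)}\int_r^\infty\bar\rho(t)t^{n-2}\,dt$, exactly the quantity appearing in the second half of \eqref{eq:H0corbarrho}.

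For (i), since $\bar\rho(t)/t\in L^1_{\loc}((0,\infty))$, the profile $\bar Q_\rho$ is absolutely continuous on every compact subinterval of $(0,\infty)$ with $\bar Q_\rho'(r)=-\bar\rho(r)/r$ a.e.; the standard fact that a radial function with absolutely continuous profile has weak gradient $\bar Q_\rho'(|x|)\,x/|x|$ then yields the stated formula for $\nabla Q_\rho$. For the $L^1$ bounds, Fubini gives
\[
\int_{B_b}Q_\rho(x)\,dx=\sigma_{n-1}\int_0^b r^{n-1}\int_r^\infty\frac{\bar\rho(t)}{t}\,dt\,dr=\frac{\sigma_{n-1}}{n}\left[\int_0^b\bar\rho(t)t^{n-1}\,dt+b^n\int_b^\infty\frac{\bar\rho(t)}{t}\,dt\right],
\]
whose two summands are controlled by the two terms of \eqref{eq:H0corbarrho} (the second via the tail inequality), while polar coordinates yield $\int_{B_a^c}|\nabla Q_\rho(x)|\,dx=\sigma_{n-1}\int_a^\infty\bar\rho(r)r^{n-2}\,dr<\infty$. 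The inclusion $Q_\rho\in W^{1,1}(B_b\setminus B_a)$ is then immediate since $B_b\setminus B_a\subset B_b\cap B_a^c$.

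For (ii) the tail inequality directly yields $|Q_\rho(x)|\leq|x|^{-(n-1)}\int_M^\infty\bar\rho(t)t^{n-2}\,dt$ for $|x|\geq M$, with the remaining integral finite by \eqref{eq:H0corbarrho}. For (iii), if $\rho\in L^1(\R^n)$, then \eqref{eq:integrabilityQL1} gives $\int_0^\infty\bar\rho(t)t^{n-1}\,dt<\infty$, and the same Fubini swap over all of $\R^n$ yields $\int_{\R^n}Q_\rho\,dx=(\sigma_{n-1}/n)\int_0^\infty\bar\rho(t)t^{n-1}\,dt<\infty$. When $\rho$ has compact support in $\bar B_R$, then $\bar\rho\equiv 0$ on $(R,\infty)$ forces $\bar Q_\rho\equiv 0$ on $[R,\infty)$, so $\supp Q_\rho\subset\bar B_R$; and $\rho\in L^1_{\loc}$ with compact support gives $\rho\in L^1(\R^n)$, reducing to the previous case. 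There is no substantive obstacle beyond the careful splitting in the Fubini swap so that each resulting one-dimensional integral matches one of the two summands in \eqref{eq:H0corbarrho}.
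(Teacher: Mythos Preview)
Your proof is correct and follows essentially the same approach as the paper: work with the radial profile, use absolute continuity to get the derivative formula, pass to polar coordinates plus Fubini for the $L^1$ bounds, and use the tail estimate $\bar Q_\rho(r)\le r^{-(n-1)}\int_r^\infty\bar\rho(t)t^{n-2}\,dt$ for part~(ii). The only cosmetic difference is in (iii): the paper bounds $\int_{B_b^c}Q_\rho$ separately via $\sigma_{n-1}\int_b^\infty\frac{\bar\rho(r)}{r}\frac{r^n-b^n}{n}\,dr$, whereas you do a single Fubini swap over all of $\R^n$ to obtain $\int_{\R^n}Q_\rho=(\sigma_{n-1}/n)\int_0^\infty\bar\rho(t)t^{n-1}\,dt$, which is slightly cleaner.
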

\begin{proof}
Part \ref{item:Q1}.
Let $0 < a < 1$.
Then
\begin{equation}\label{eq:intrhot}
 \int_a^1 \frac{\overline{\rho}(t)}{t}\,dt + \int_1^{\infty} \frac{\overline{\rho}(t)}{t}\,dt \leq \frac{1}{a^n} \int_a^1 \overline{\rho}(t) t^{n-1} \,dt + \int_1^{\infty} \overline{\rho}(t) t^{n-2} \,dt < \infty ,
\end{equation}
in view of \eqref{eq:H0corbarrho}.
Consequently, the radial representation $\overline{Q}_{\rho}$ of $Q_{\rho}$ is locally Sobolev in $(0, \infty)$ (see, e.g., \cite[Thms.\ 3.29 and 7.16]{Leoni09}). 
An argument similar to \cite[Lemma 4.1]{Ball82} shows that $Q_{\rho}$ is locally Sobolev in $\Rn \setminus \{ 0\}$ and its distributional derivative coincides with its classical derivative a.e.

A straightforward calculation based on the coarea formula and Fubini's theorem shows that
\[
 \int_{B_b} Q_{\rho} (x) \, dx = \frac{\s_{n-1}}{n} \left[ \int_0^b \overline{\rho}(t) t^{n-1} \, dt + b^n \int_b^{\infty} \frac{\overline{\rho}(t)}{t} \, dt \right] < \infty ,
\]
thanks to \eqref{eq:H0corbarrho} and \eqref{eq:intrhot}.

As $\overline{Q}_{\rho}$ is absolutely continuous,
\begin{equation*}\label{eq:Qprime}
 \overline{Q}'_{\rho} (t) = - \frac{\overline{\rho}(t)}{t} , \qquad \text{for a.e. } t > 0 ,
\end{equation*}
so
\begin{equation*}\label{eq:nablaQ}
 \nabla Q_{\rho} (x) = \overline{Q}'_{\rho} (\abs{x}) \frac{x}{\abs{x}} , \qquad \text{for a.e. } x \in \Rn \setminus \{ 0 \} .
\end{equation*}
With this we clearly have $\nabla Q_{\rho} \in L^1 (B_a^c,\Rn)$, due to \eqref{eq:H0cor}. \smallskip

Part \ref{item:Q2}. For any $t \geq M>0$ we have
\begin{equation*}
 \overline{Q}_\rho (t) = \int_{t}^\infty \frac{\overline{\rho}(r)}{r}\,dr \leq \frac{1}{t^{n-1}} \int_{t}^\infty \overline{\rho}(r) r^{n-2}\,dr \leq \frac{1}{t^{n-1}} \int_{M}^\infty \overline{\rho}(r) r^{n-2}\,dr 
\end{equation*}
and \ref{item:Q2} is concluded thanks to \eqref{eq:H0corbarrho}. \smallskip

Part \ref{item:Q3}.
Assume $\rho \in L^1 (\Rn)$.
In order to show that $Q_{\rho} \in L^1 (\Rn)$ it is enough to check that $Q_{\rho} \in L^1 (B_b^c)$, due to \ref{item:Q1}.
A straightforward calculation shows that
\[
 \int_{B_b^c} Q_{\rho} (x) \, dx = \s_{n-1} \int_b^{\infty} \frac{\overline{\rho} (r)}{r} \frac{r^n - b^n}{n} \, dr \leq \s_{n-1} \int_b^{\infty} \overline{\rho} (r) r^{n-1} \, dr < \infty ,
\]
in view of \eqref{eq:integrabilityQL1}.
If, in addition, $\rho$ has compact support then so does $Q_\rho$, and, hence, $Q_{\rho} \in L^1(\R^n)$.
\end{proof}


As a consequence of part \ref{item:Q3} of Lemma \ref{le:Q}, if $\rho \in L^1 (\Rn)$ then $\widehat{Q}_\rho$ is a continuous function, which is analytic if $\rho$ has compact support.
In fact, in the development of the theory, we will often assume that $\supp \rho = \overline{B_\delta}$ for some $\d>0$.
In this case, for an open set $\O \subset \Rn$ we define $\Omega_\delta=\Omega+B_\d$, and note that $\Gcal_\rho \phi$ is supported in $\Omega_\d$ for $\phi \in C_c^{\infty}(\Omega)$.

We now show that the nonlocal gradient can be written as the convolution of $Q_\rho$ with the classical gradient, and derive a formula for $\widehat{Q}_{\rho}$.

\begin{proposition}\label{pr:Qhat}
The following two statements hold:
\begin{enumerate}[label = (\roman*)]
\item\label{item:Qhat1}
For $u \in C^{\infty}_c (\Rn)$, we have that $\mathcal{G}_\rho u \in C^{\infty} (\Rn, \Rn)$ and 
\begin{equation*}
\mathcal{G}_\rho u = Q_\rho * \nabla u = \nabla (Q_\rho * u).
\end{equation*}

\item\label{item:Qhat2} If, in addition, $\rho \in L^1 (\Rn)$, then
\[
 \widehat{\Gcal_\rho u} (\xi) = 2\pi i \xi \widehat{Q}_\rho(\xi)\widehat{u}(\xi) \quad \text{and} \quad \lambda_\rho(\xi) = 2\pi i \xi \widehat{Q}_{\rho}(\xi) , \qquad \xi \in \Rn
\]
and 
\begin{equation}\label{eq:Qhatrho}
 \widehat{Q}_\rho(\xi) = \frac{1}{2\pi\abs{\xi}}\int_{\R^n}\frac{\rho(x)x_1}{\abs{x}^2}\sin(2\pi\abs{\xi}x_1)\,dx , \qquad \xi \in \Rn \setminus \{ 0 \} .
\end{equation}

\end{enumerate}
\end{proposition}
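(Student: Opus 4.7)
The plan is to derive part \ref{item:Qhat1} from the pointwise identity $\nabla Q_\rho(z) = -\rho(z)\,z/\abs{z}^2$ of Lemma~\ref{le:Q}\ref{item:Q1} via a truncated integration by parts, and to obtain part \ref{item:Qhat2} by taking Fourier transforms of the resulting identity and comparing with Lemma~\ref{le:lamda}.

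For part \ref{item:Qhat1}, I would first rewrite $\Gcal_\rho u(x) = \int_{\Rn} [u(x) - u(x-z)]\, \rho(z)\, z/\abs{z}^2\, dz$ via the change of variable $y = x-z$, and set
\[
A_\epsilon(x) := \int_{B_\epsilon^c} [u(x) - u(x-z)]\, \frac{\rho(z)\, z}{\abs{z}^2}\, dz.
\]
The bound from the proof of Lemma~\ref{le:GuL1Linfty} combined with dominated convergence gives $A_\epsilon(x) \to \Gcal_\rho u(x)$. On $B_\epsilon^c$ I would substitute $\rho(z)\, z/\abs{z}^2 = -\nabla Q_\rho(z)$ and integrate by parts componentwise, using $\partial_{z_j}[u(x) - u(x-z)] = (\partial_j u)(x-z)$ together with the compact support of $u$ to kill any contribution at infinity, obtaining
\[
A_\epsilon(x) = \int_{B_\epsilon^c} Q_\rho(z)\, \nabla u(x-z)\, dz + \int_{\partial B_\epsilon} [u(x) - u(x-z)]\, Q_\rho(z)\, \frac{z}{\abs{z}}\, d\mathcal{H}^{n-1}(z).
\]
Since $Q_\rho \in L^1_{\loc}(\Rn)$ by Lemma~\ref{le:Q}\ref{item:Q1} and $\nabla u$ has compact support, the first term converges to $(Q_\rho * \nabla u)(x)$, while the boundary term is bounded in modulus by $L\,\s_{n-1}\, \epsilon^n\, \overline{Q}_\rho(\epsilon)$, with $L$ a Lipschitz constant of $u$.

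The main technical point is therefore to verify that $\epsilon^n\, \overline{Q}_\rho(\epsilon) \to 0$ as $\epsilon \to 0$; this is where the hypotheses on $\rho$ enter in a non-trivial way. I would split $\overline{Q}_\rho(\epsilon) = \int_\epsilon^1 \overline{\rho}(t)/t\, dt + \int_1^\infty \overline{\rho}(t)/t\, dt$: the tail is finite by \eqref{eq:H0corbarrho}, so its contribution is trivially $o(1)$, whereas for the near-origin part the elementary inequality $\epsilon^n/t \leq t^{n-1}$, valid for $t \geq \epsilon$, produces an integrand dominated by $\overline{\rho}(t)\, t^{n-1}$, which is integrable on $(0,1)$ by \eqref{eq:H0corbarrho}; dominated convergence then gives the required decay. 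Passing to the limit $\epsilon \to 0$ yields $\Gcal_\rho u = Q_\rho * \nabla u$. Because $Q_\rho \in L^1_{\loc}(\Rn)$ and $u \in C_c^{\infty}(\Rn)$, the convolution $Q_\rho * u$ is smooth and differentiation commutes with it, so $Q_\rho * \nabla u = \nabla(Q_\rho * u)$, which simultaneously establishes the chain of equalities and the smoothness of $\Gcal_\rho u$.

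For part \ref{item:Qhat2}, once $\rho \in L^1(\Rn)$ is assumed Lemma~\ref{le:Q}\ref{item:Q3} gives $Q_\rho \in L^1(\Rn)$, so $\widehat{Q}_\rho$ is a bounded continuous function. Taking the Fourier transform of the identity from part \ref{item:Qhat1} yields $\widehat{\Gcal_\rho u}(\xi) = 2\pi i \xi\, \widehat{Q}_\rho(\xi)\, \widehat{u}(\xi)$, and comparing this with Lemma~\ref{le:lamda}, while choosing for each $\xi$ a $u \in C_c^{\infty}(\Rn)$ with $\widehat{u}(\xi) \neq 0$, forces $\lambda_\rho(\xi) = 2\pi i \xi\, \widehat{Q}_\rho(\xi)$ for every $\xi \in \Rn$. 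Inserting the alternative expression for $\lambda_\rho$ from Lemma~\ref{le:lamda} and dividing by $2\pi i \xi$ for $\xi \neq 0$ then delivers formula \eqref{eq:Qhatrho}.
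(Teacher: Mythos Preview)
Your proof is correct and follows essentially the same route as the paper: a truncated integration by parts exploiting $\nabla Q_\rho(z) = -\rho(z)\,z/|z|^2$, followed by taking Fourier transforms and comparing with Lemma~\ref{le:lamda}. Your dominated-convergence argument for $\epsilon^n\,\overline{Q}_\rho(\epsilon) \to 0$ (via $\epsilon^n/t \le t^{n-1}$ for $t \ge \epsilon$) is in fact cleaner than the paper's, which only extracts $\liminf_{\epsilon \downarrow 0}\epsilon^n\,\overline{Q}_\rho(\epsilon)=0$ from $Q_\rho \in L^1_{\loc}$ and then passes to a subsequence; one small point to make explicit is that killing the boundary contribution at infinity uses not just the compact support of $u$ but also the radial symmetry of $Q_\rho$ (for the constant-in-$z$ term $u(x)$), exactly as the paper does.
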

\begin{proof}
Part \ref{item:Qhat1}.
We consider $x, e \in \Rn$ with $|e| = 1$ and the vector field
	\[
	\beta : \Rn \setminus \{ x\} \rightarrow \Rn , \qquad \beta(y)=(u(x)-u(y)) \, Q_\rho (x-y) \, e.
	\] 
For any $0 < a <b$, by Lemma \ref{le:Q}, $\beta \in W^{1,1} (B_b (x) \setminus B_a (x),\R^n)$.
By the divergence theorem (e.g., \cite[Th.\ 18.1]{Leoni09})
\begin{equation}
\begin{split}\label{eq:divergencebeta}
  \int_{B_b (x) \setminus B_a (x)} \diver \beta (y) \, dy &= \int_{\partial B_b (x)} \beta (y) \cdot \nu_{B_b (x)} (y) \, d \mathcal{H}^{n-1} (y) \\
  &\qquad - \int_{\partial B_a (x)} \beta (y) \cdot \nu_{B_b (x)} (y) \, d \mathcal{H}^{n-1} (y) ,
 \end{split}
\end{equation}
where $\nu_{B_r (x)}$ is the exterior normal to $B_r (x)$, for $r=a,b$.
In fact,
	\begin{equation*} \label{eq: divergence of the integrand}
	\diver \beta(y) = - Q_\rho (x-y) \, \nabla u(y) \cdot e  - (u(x) - u(y)) \nabla Q_\rho (x-y) \cdot e , \qquad \text{a.e. } y \in \Rn .
	\end{equation*}
It turns out that both terms of the right-hand side of the formula above are in $L^1 (\Rn)$.
Indeed, by Lemma \ref{le:Q} and the fact that $u$ has compact support, we have that the map $y \mapsto Q_\rho (x-y) \, \nabla u(y)$ is in $L^1 (\Rn,\Rn)$.
Analogously, the map $y \mapsto (u(x) - u(y)) \nabla Q_\rho (x-y)$ is in $L^1 (B_a (x)^c,\Rn)$, and, as $u$ is Lipschitz, we have
\begin{align*}
 \int_{B_a (x)} \abs{(u(x) - u(y)) \nabla Q_\rho (x-y)} \, dy & \leq \norm{\nabla u}_{\infty} \int_{B_a (x)} \abs{x - y} \abs{\nabla Q_\rho (x-y)} \, dy \\
 & = \norm{\nabla u}_{\infty} \s_{n-1} \int_0^a t^{n-1 }\overline{\rho} (t) \, dt < \infty ,
\end{align*}
in view of Lemma~\ref{le:Q} and \eqref{eq:H0corbarrho}.
In particular,
\begin{equation}\label{eq:limab}
\begin{split}
 \lim_{\substack{a \searrow 0 \\ b \to \infty}} \int_{B_b (x) \setminus B_a (x)} \diver \beta (y) \, dy &= - \int_{\Rn} Q_\rho (x-y) \, \nabla u(y) \cdot e \, dy \\
 &\qquad- \int_{\Rn} (u(x) - u(y)) \nabla Q_\rho (x-y) \cdot e \, dy .
 \end{split}
\end{equation}
By Lemma \ref{le:Q}, $Q_{\rho} \in L^1 (B_r)$ for all $r>0$, so
\[
 \int_0^r \overline{Q}_{\rho} (t) t^{n-1} \, dt < \infty,
\]
which implies that $\liminf_{a \downarrow 0} a^n \overline{Q}_{\rho} (a) = 0$.
Let $\{ a_j \}_{j \in \N}$ be a sequence of positive numbers tending to zero such that $\lim_{j \to \infty} a_j^n \overline{Q}_{\rho} (a_j) = 0$.
As $u$ is Lipschitz,
\begin{align*}
 \left|\int_{\partial B_{a_j} (x)} \beta \cdot \nu_{B_{a_j} (x)} \, d \mathcal{H}^{n-1} \right| \leq \int_{\partial B_{a_j} (x)} \abs{\beta} \, d \mathcal{H}^{n-1} \leq \left\| \nabla u \right\|_{\infty} \s_{n-1} a_j^n \overline{Q}_{\rho} (a_j) \to 0 \quad \text{as } j \to \infty ,
\end{align*}
and, as $u$ has compact support, if $b$ is big enough,
\[
 \int_{\partial B_b (x)} \beta \cdot \nu_{B_b (x)} \, d \mathcal{H}^{n-1} =  u(x) \int_{\partial B_b (x)} Q_\rho (x-y) \, e \cdot \nu_{B_b (x)} \, d \mathcal{H}^{n-1} (y) = 0
\]
by symmetry.  
Together with \eqref{eq:divergencebeta} and \eqref{eq:limab}, this yields
\[
 \int_{\Rn} Q_\rho (x-y) \, \nabla u(y) \cdot e \, dy = - \int_{\Rn} (u(x) - u(y)) \nabla Q_\rho (x-y) \cdot e \, dy .
\]
As this is true for every $e \in \Rn$ with $|e|=1$, we conclude that
\[
 \int_{\Rn} Q_\rho (x-y) \, \nabla u(y) \, dy = - \int_{\Rn} (u(x) - u(y)) \nabla Q_\rho (x-y) \, dy .
\]
In light of Lemma~\ref{le:Q} \ref{item:Q1}, this equality shows that $Q_\rho * \nabla u (x) = \mathcal{G}_\rho u (x)$.
Naturally, we also have $Q_\rho * \nabla u = \nabla (Q_\rho * u)$, since $u \in C^{\infty}_c (\Rn)$ and $Q_{\rho} \in L^1_{\loc} (\Rn)$.
In particular, $\mathcal{G}_\rho u \in C^{\infty} (\Rn, \Rn)$. \smallskip

Part \ref{item:Qhat2}.
If $\rho \in L^1 (\Rn)$ we have $Q_{\rho} \in L^1 (\Rn)$ thanks to Lemma \ref{le:Q}.
Consequently, taking Fourier transforms in the expression $\mathcal{G}_\rho u  = Q_\rho * \nabla u$, we conclude that $\widehat{\Gcal_\rho u} (\xi) = 2\pi i \xi \widehat{Q}_\rho(\xi)\widehat{u}(\xi)$ for all $\xi \in \Rn$.
Comparing this expression with that of \eqref{eq:Gmultiplier}, we obtain the correspondence $\lambda_\rho(\xi) = 2\pi i \xi \widehat{Q}_{\rho}(\xi)$ and, hence, the equality
\[
 \widehat{Q}_\rho(\xi) = \frac{-i\xi}{2\pi\abs{\xi}^2}\cdot\lambda_\rho(\xi)= \frac{1}{2\pi\abs{\xi}}\int_{\R^n}\frac{\rho(x)x_1}{\abs{x}^2}\sin(2\pi\abs{\xi}x_1)\,dx , \qquad \xi \in \Rn \setminus \{ 0 \}
\]
holds thanks to Lemma \ref{le:lamda}.
\end{proof}
\begin{remark}\label{rem:QrhoBessel}
If $Q_\rho \in L^1(\R^n)$ and $\xi \in \R^n\setminus\{0\}$, we can use the formula for the Fourier transform of a radial function, see~\cite[Appendix~B.5]{Gra14a}, to find the following alternative expression
\begin{align}\label{eq:Qrhoalternative}
\begin{split}
\widehat{Q}_\rho(\xi) &=\frac{2\pi}{\abs{\xi}^{\frac{n-2}{2}}}\int_0^\infty \int_r^\infty \frac{\overline{\rho}(t)}{t}\,dt \,J_{\frac{n}{2}-1}(2\pi \abs{\xi}r)r^{\frac{n}{2}}\,dr\\
&=\frac{2\pi}{\abs{\xi}^{\frac{n-2}{2}}}\int_0^\infty \frac{\overline{\rho}(t)}{t} \int_0^t\,J_{\frac{n}{2}-1}(2\pi \abs{\xi}r)r^{\frac{n}{2}}\,dr\,dt= \frac{1}{\abs{\xi}^{\frac{n}{2}}}\int_0^\infty \overline{\rho}(t)t^{\frac{n}{2}-1}J_{\frac{n}{2}}(2\pi\abs{\xi}t)\,dt,
\end{split}
\end{align}
with $J_{\nu}$ for $\nu>0$ the Bessel function of the first kind. In the last line we used
\[
\int_0^t J_{\frac{n}{2}-1}(2\pi \abs{\xi}r)r^{\frac{n}{2}}\,dr = \frac{t^{\frac{n}{2}}}{2\pi\abs{\xi}}J_{\frac{n}{2}}(2\pi t\abs{\xi}),
\]
which follows from the identity in \cite[Appendix~B.3]{Gra14a}. The integral in \eqref{eq:Qrhoalternative} also appears in \cite{Arr23} through different methods.
\end{remark}

Part \ref{item:Qhat2} of Proposition \ref{pr:Qhat} formally shows that, for the nonlocal gradient $\Gcal_\rho$, the fundamental theorem of calculus in Fourier space looks like
\begin{equation}\label{eq:ftocfourier}
\widehat{u}(\xi) = \frac{-i\xi}{2\pi\abs{\xi}^2\widehat{Q}_\rho(\xi)}\cdot\widehat{\Gcal_\rho u}(\xi), \qquad \xi \in \Rn \setminus \{ 0 \},
\end{equation}
which motivates the further study of the Fourier transform of $Q_\rho$. This will be carried out in Section \ref{sec:poinc}.

\section{Function spaces}\label{se:function}

In this section we establish the definition and first properties of the spaces $H^{\rho,p}(\R^n)$ and $H^{\rho,p}_0(\Omega)$, including density results. Then, we show a sufficient condition for the equivalence of spaces associated with different kernels.

\subsection{Definition and first properties}
The aim of this section is to introduce the space of $L^p$-functions whose nonlocal gradient is an $L^p$-function in analogy to Sobolev spaces, and derive some of its general properties. We start by defining the nonlocal divergence.

\begin{definition}\label{def:nl diver} For a vector field $\Phi \in C^{\infty}_c (\Rn, \Rn)$, we define its nonlocal divergence as
\[
 \diver_{\rho} \Phi (x) = \int_{\R^n} \frac{\Phi(x) - \Phi(y)}{\abs{x-y}} \cdot \frac{x-y}{\abs{x-y}}\rho(x-y)\,dy , \qquad x \in \Rn .
\]
\end{definition}

The following proposition states that the nonlocal divergence is the dual operator of the nonlocal gradient $\Gcal_\rho$ in the sense of integration by parts.
Different proofs of analogous results have appeared in the literature, for instance \cite{BeCuMC, COMI2019, MeS, Silhavy20}.
We do not include the proof here and refer the interested reader to any of those references. 

\begin{proposition}\label{prop: IBP I}
Let $u \in L^1(\Rn)+L^{\infty}(\Rn)$ be such that
\begin{equation}\label{eq: hyp IBP}
\int_K\int_{\Rn} \frac{|u(x)-u(y)|}{|x-y|} \rho(x-y)\,dx\dy< \infty
\end{equation}
for any compact set $K\subset \Rn$. Then, for any $\Phi \in C^{\infty}_c (\Rn, \Rn)$, the integration by parts formula
\[
 \int_{\Rn} \mathcal{G}_{\rho} u(x) \cdot \Phi (x) \, dx = - \int_{\Rn} u(x) \diver_{\rho} \Phi (x) \, dx 
\]
holds.
\end{proposition}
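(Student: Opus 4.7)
The plan is to verify the formula by expanding both sides as absolutely convergent double integrals, symmetrizing them via the antisymmetry of the kernel $K(x,y):=(x-y)\rho(x-y)/|x-y|^{2}$ (which follows from $\rho$ being radial), and then eliminating the remaining integral through a principal-value cancellation.

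First I would establish absolute convergence. Writing $u=u_{1}+u_{\infty}$ with $u_{1}\in L^{1}(\Rn)$ and $u_{\infty}\in L^{\infty}(\Rn)$, and taking the compact set to be $\supp\Phi$ in hypothesis \eqref{eq: hyp IBP}, one obtains
\[
\iint|\Phi(x)|\,\frac{|u(x)-u(y)|}{|x-y|}\rho(x-y)\,dx\,dy\leq\|\Phi\|_{\infty}\int_{\supp\Phi}\int_{\Rn}\frac{|u(x)-u(y)|}{|x-y|}\rho(x-y)\,dx\,dy<\infty,
\]
so Fubini turns $\int\mathcal{G}_{\rho}u\cdot\Phi\,dx$ into an absolutely convergent double integral. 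For the right-hand side, the Lipschitz estimate $|\Phi(x)-\Phi(y)|/|x-y|\leq\min\{\|\nabla\Phi\|_{\infty},2\|\Phi\|_{\infty}/|x-y|\}$, together with \ref{itm:h0} and the compact support of $\Phi$, shows that $\diver_{\rho}\Phi\in L^{1}(\Rn)\cap L^{\infty}(\Rn)$, so $\int u\,\diver_{\rho}\Phi\,dx$ is well defined and likewise extends to an absolutely convergent double integral.

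Next I would symmetrize. Since $K(y,x)=-K(x,y)$, swapping variables in each double integral and averaging yields
\begin{align*}
\int\mathcal{G}_{\rho}u\cdot\Phi\,dx&=\tfrac{1}{2}\iint(u(x)-u(y))(\Phi(x)+\Phi(y))\cdot K(x,y)\,dx\,dy,\\
-\int u\,\diver_{\rho}\Phi\,dx&=-\tfrac{1}{2}\iint(u(x)+u(y))(\Phi(x)-\Phi(y))\cdot K(x,y)\,dx\,dy.
\end{align*}
The algebraic identity $(u(x)-u(y))(\Phi(x)+\Phi(y))+(u(x)+u(y))(\Phi(x)-\Phi(y))=2(u(x)\Phi(x)-u(y)\Phi(y))$ then reduces the proposition to showing
\[
\iint\bigl[u(x)\Phi(x)-u(y)\Phi(y)\bigr]\cdot K(x,y)\,dx\,dy=0.
\]

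Finally, for each $\epsilon>0$ the truncated integral over $\{|x-y|>\epsilon\}$ is absolutely convergent, and Fubini splits it into two pieces in which each inner integral equals, after translation, $\int_{|z|>\epsilon}z\,\rho(z)/|z|^{2}\,dz$; this vanishes componentwise by oddness in $z$ and the evenness of $\rho$. Dominated convergence as $\epsilon\to 0$, using the envelope supplied by the first step, then yields the identity. The main obstacle is precisely this singular behavior of $K$ at $x=y$: since $z\mapsto z\,\rho(z)/|z|^{2}$ need not be integrable near the origin, the cancellation must be read as a principal value, which the truncation argument makes rigorous without requiring a Cauchy-principal-value interpretation of either side of the final identity.
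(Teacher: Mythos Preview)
Your proof is correct and follows the standard symmetrization argument. The paper does not actually supply its own proof of this proposition; it explicitly states that the proof is omitted and refers the reader to \cite{BeCuMC, COMI2019, MeS, Silhavy20}. Your approach---absolute convergence via \eqref{eq: hyp IBP} and Lemma~\ref{le:GuL1Linfty}, symmetrization through the antisymmetry of $(x-y)\rho(x-y)/|x-y|^{2}$, and the principal-value cancellation for the residual term---is precisely the argument found in those references (see, e.g., \cite[Th.~1.4]{MeS} or \cite[Prop.~2.4]{Silhavy20}), so there is nothing to compare.

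One minor clarification worth making explicit: when you say ``$\diver_{\rho}\Phi\in L^{1}\cap L^{\infty}$, so $\int u\,\diver_{\rho}\Phi\,dx$ \ldots\ extends to an absolutely convergent double integral'', what you actually need (and what your Lipschitz/compact-support estimate delivers) is that the function $x\mapsto\int|\Phi(x)-\Phi(y)|\,\rho(x-y)/|x-y|\,dy$ lies in $L^{1}\cap L^{\infty}$, not merely $\diver_{\rho}\Phi$ itself. This is immediate from the same bounds, but it is the correct hypothesis for Fubini.
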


Notice that the hypothesis \eqref{eq: hyp IBP} guarantees that $\Gcal_\rho u (x)$ exists as a Lebesgue integral for a.e.\ $x\in \Rn$, and $\mathcal{G}_\rho u \in L^1_{\loc}(\Rn,\Rn)$; the assumption $u \in L^1(\Rn)+L^\infty(\Rn)$ combined with Lemma~\ref{le:GuL1Linfty} ensures that $u\, \diver_{\rho} \Phi$ is integrable. The integration by parts formula leads naturally to the definition of the distributional nonlocal gradient. 

\begin{definition}\label{def: wnl gradient}
Given $u\in L^1(\Rn)+L^\infty(\Rn)$, we define its distributional, or weak, nonlocal gradient $D_\rho u$ as the distribution
\[
 \langle D_\rho u,\Phi\rangle=-\int_{\Rn} u(x) \diver_\rho \Phi(x)\,dx , \qquad \Phi \in C^{\infty}_c (\Rn, \Rn) .
\]
\end{definition}
Checking that $D_\rho u$ is a distribution is elementary, given Lemma~\ref{le:GuL1Linfty}.
Thanks to Proposition \ref{prop: IBP I}, if $u \in L^1(\Rn)+L^\infty(\Rn)$ satisfies \eqref{eq: hyp IBP}, its nonlocal gradient and its distributional nonlocal gradient coincide: $\Gcal_\rho u= D_\rho u$. 

\begin{definition}\label{def: FuncSpace}
Let $p \in [1, \infty]$.
We define the $\rho$-nonlocal Sobolev space $H^{ \rho,p}(\R^n)$ as the set of functions $u \in L^p (\Rn)$ such that $D_{\rho} u \in L^p (\Rn, \Rn)$, equipped with the norm
\[\|u\|_{H^{\rho,p}(\R^n)}=
\begin{cases}
 \left( \norm{u}_{L^p (\Rn)} + \norm{D_{\rho} u}_{L^p (\Rn, \Rn)} \right)^{\frac{1}{p}} & \text{for } p \in [1, \infty) , \\
 \max \{ \norm{u}_{L^\infty (\Rn)}, \norm{D_{\rho} u}_{L^\infty (\Rn, \Rn)} \} & \text{for } p = \infty .
\end{cases}
\]
For an open set $\O\subset \Rn$, we define the closed subspace
\[
H^{\rho,p}_0(\Omega):=\{u \in H^{\rho,p}(\R^n)\,:\, u=0 \ \text{a.e.~in $\Omega^c$}\}.
\]
We also denote $H^\rho(\Rn)= H^{\rho,2}(\Rn)$ and $H^{\rho}_0(\Omega) = H^{\rho,2}_0(\Omega)$.
\end{definition}

Standard arguments show that $H^{\rho,p}(\Rn)$ is a Banach space, which is separable for $p \in [1, \infty)$, reflexive for $p \in (1, \infty)$ and Hilbert for $p=2$ (see, e.g., \cite[Prop.\ 8.1]{Brezis}, \cite[Th.\ 2.1]{MeD} or \cite[Prop.\ 3.4]{BeCuMo22}). We note that the choice $\rho (x) = \frac{c_{n,s}}{|x|^{n+s-1}}$ for $s\in (0,1)$ and $p \in (1,\infty)$ gives rise to the usual Bessel-potential space $H^{s,p} (\Rn)$, see~e.g., \cite[Th.\ 1.7]{ShS2015} and \cite[Th.\ A.1]{Comi3}. Moreover, we deduce immediately from the definition that $H^{\rho,p}_0(\Rn)=H^{\rho,p}(\Rn)$.


The following result shows the embedding of classical Sobolev spaces into $H^{\rho,p}(\Rn)$.
\begin{proposition}\label{prop: W1p embed}
Let $p \in [1 , \infty]$.
Assume $\rho$ satisfies \ref{itm:h0}.
Then, $W^{1,p}(\Rn)\subset H^{\rho,p}(\Rn)$ with continuous embedding, i.e., there exists a constant $C=C(n,p,\rho)>0$ such that for any $u\in W^{1,p}(\Rn)$,
\[
 \| u\|_{H^{\rho,p}(\Rn)} \leq C \| u\|_{W^{1,p}(\Rn)}.
\]
Moreover, $D_\rho u= \Gcal_\rho u$. 
\end{proposition}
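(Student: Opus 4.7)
The plan is to establish the $L^p$-bound on $\Gcal_\rho u$ directly via Minkowski's integral inequality applied to the defining formula, and then to verify the distributional identification $D_\rho u = \Gcal_\rho u$ by checking the hypothesis of Proposition~\ref{prop: IBP I}. After the change of variables $z = x-y$, the pointwise definition reads
\[
\Gcal_\rho u(x) = \int_{\Rn} \frac{u(x) - u(x-z)}{\abs{z}}\frac{z}{\abs{z}}\rho(z)\,dz,
\]
and the key classical fact to be invoked is that, for $u \in W^{1,p}(\Rn)$ with $p \in [1,\infty]$, one has the translation estimate $\norm{u(\cdot) - u(\cdot - z)}_{L^p(\Rn)} \leq \abs{z}\, \norm{\nabla u}_{L^p(\Rn)}$, while trivially $\norm{u(\cdot) - u(\cdot - z)}_{L^p(\Rn)} \leq 2 \norm{u}_{L^p(\Rn)}$.

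First I would bound $\norm{\Gcal_\rho u}_{L^p(\Rn,\Rn)}$ by Minkowski's integral inequality (for $p=\infty$ directly by the triangle inequality) to obtain
\[
\norm{\Gcal_\rho u}_{L^p(\Rn,\Rn)} \leq \int_{\Rn} \frac{\rho(z)}{\abs{z}} \norm{u(\cdot) - u(\cdot - z)}_{L^p(\Rn)}\,dz.
\]
Splitting the integral over $B_1$ and $B_1^c$ and applying the two translation estimates respectively gives
\[
\norm{\Gcal_\rho u}_{L^p(\Rn,\Rn)} \leq \norm{\nabla u}_{L^p(\Rn)} \int_{B_1}\rho(z)\,dz + 2\norm{u}_{L^p(\Rn)}\int_{B_1^c}\frac{\rho(z)}{\abs{z}}\,dz,
\]
and both integrals are finite by \eqref{eq:H0cor}. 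Together with the trivial estimate $\norm{u}_{L^p(\Rn)} \leq \norm{u}_{W^{1,p}(\Rn)}$, this yields $\norm{u}_{H^{\rho,p}(\Rn)} \leq C\norm{u}_{W^{1,p}(\Rn)}$ with $C$ depending only on $n,p,\rho$.

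Finally, to identify $D_\rho u$ with $\Gcal_\rho u$, I would invoke Proposition~\ref{prop: IBP I}, which requires checking \eqref{eq: hyp IBP}. Applying Fubini and H\"{o}lder's inequality on a compact $K \subset \Rn$,
\[
\int_K\int_{\Rn}\frac{\abs{u(x)-u(y)}}{\abs{x-y}}\rho(x-y)\,dy\,dx = \int_{\Rn}\frac{\rho(z)}{\abs{z}}\int_K\abs{u(x)-u(x-z)}\,dx\,dz \leq \abs{K}^{1/p'}\int_{\Rn}\frac{\rho(z)}{\abs{z}}\norm{u(\cdot)-u(\cdot-z)}_{L^p(\Rn)}\,dz,
\]
which is finite by exactly the same split as above. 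Thus $u$ satisfies \eqref{eq: hyp IBP}, and Proposition~\ref{prop: IBP I} yields $D_\rho u = \Gcal_\rho u$. I do not foresee a real obstacle: the only subtlety is making sure the translation estimate is applied on the right domain and that both the cases $p<\infty$ and $p=\infty$ are covered, the latter following either by the same argument or via the Lipschitz bound $\abs{u(x)-u(y)} \leq \norm{\nabla u}_{L^\infty}\abs{x-y}$.
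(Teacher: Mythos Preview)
Your proposal is correct and follows essentially the same approach as the paper: Minkowski's integral inequality after the change of variables $z=x-y$, the split over $B_1$ and $B_1^c$, the translation estimate $\norm{u-u(\cdot-z)}_{L^p}\le |z|\norm{\nabla u}_{L^p}$ on $B_1$, and the trivial $2\norm{u}_{L^p}$ bound on $B_1^c$, with the identification $D_\rho u=\Gcal_\rho u$ via Proposition~\ref{prop: IBP I}. The only cosmetic difference is that the paper treats $p=\infty$ by a direct pointwise argument rather than folding it into the Minkowski step, and it leaves the verification of \eqref{eq: hyp IBP} implicit whereas you spell it out.
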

\begin{proof}
Let $u\in W^{1,p}(\Rn)$.
In the case $p=\infty$, we notice that for any $x \in \Rn$,
\begin{align*}
 | \Gcal_\rho u(x) | & \leq 
 \int_{B_1}\frac{|u(x+h)-u(x)|}{|h|} \rho(h)\,dh+ \int_{B_1^c}\frac{|u(x+h)-u(x)|}{|h|} \rho(h)\,dh \\
&\leq \|\nabla u\|_{L^\infty(\Rn,\Rn)} \int_{B_1} \rho(h)\,dh+2\|u\|_{L^\infty(\Rn)} \int_{B_1^c} \frac{\rho(h)}{|h|}\,dh 
 \leq C \|u\|_{W^{1,\infty}(\Rn)},
\end{align*}
where we have used \ref{itm:h0}. Thus, $\| \Gcal_\rho u\|_{L^\infty(\Rn,\Rn)} \leq C \|u\|_{W^{1,\infty}(\Rn)}$.

For $p\in [1,\infty)$ we follow the proof of \cite[Prop.\ 2.7]{bellido2020convergence}, which we include here for the reader's convenience.
Clearly, 
\begin{align*} \|\Gcal_\rho u\|_{L^p(\Rn,\Rn)} \leq &\left\|  \int_{B_1} \frac{\abs{u(\cdot +h)-u(\cdot)}}{|h|} \rho(h)\,dh\right\|_{L^p(\Rn)} 
+ \left\| \int_{B_1^c} \frac{\abs{u(\cdot+h)-u(\cdot)}}{|h|} \rho(h)\,dh\right\|_{L^p(\Rn)}.
\end{align*}
Applying Minkowski's integral inequality to the first term on the right-hand side of this inequality yields
\begin{align*}
 \left\|  \int_{B_1} \frac{\abs{u(\cdot+h)-u(\cdot)}}{|h|} \rho(h)\,dh \right\|_{L^p(\Rn)} & 
 \leq \int_{B_1} \frac{\rho(h)}{|h|} \left(\int_{\Rn} |u(x+h)-u(x)|^p\, dx\right)^\frac{1}{p} dh \\
  & \leq C\|\nabla u\|_{L^p(\Rn,\Rn)} \int_{B_1}\rho(h)\,dh\leq C \| \nabla u \|_{L^p(\Rn,\Rn)} ,
  \end{align*}
where we have used \ref{itm:h0} and \cite[Prop.\ 9.3]{Brezis}.
For the second term, applying Fubini's theorem and \ref{itm:h0}, we find
\begin{align*}
 \left\|  \int_{B_1^c} \frac{\abs{u(\cdot+h)-u(\cdot)}}{|h|} \rho(h)\,dh \right\|_{L^p(\Rn)} \! & \leq \left\| \int_{B_1^c} |u(\cdot+h)| \frac{\rho(h)}{|h|} \,dh\right\|_{L^p(\Rn)} \! + \left\|\int_{B_1^c} |u(\cdot)| \frac{\rho(h)}{|h|} \,dh\right\|_{L^p(\Rn)} \\
& \leq 2 \left(\int_{B_1^c} \frac{\rho(h)}{|h|}\,dh \right)  \| u\|_{L^p(\Rn)}  \leq C\|u\|_{L^p(\Rn)}.
\end{align*}
Consequently, 
$\|\Gcal_\rho u\|_{L^p(\Rn,\Rn)}\leq C \| u\|_{W^{1,p}(\Rn)}$.

Notice that the previous arguments show that, in both cases $p = \infty$ and $p < \infty$, any $u\in W^{1,p}(\Rn)$ satisfies the hypothesis of Proposition \ref{prop: IBP I}, and, hence, $\Gcal_\rho u=D_\rho u$.
\end{proof}

The next result shows that the gradient operator commutes with convolution, see also~\cite[Lemma~3.7]{HaT23}.
\begin{lemma}\label{le:convolutiongradient} 
Let $p \in [1, \infty]$ and let $\rho$ satisfy \ref{itm:h0}.
Let $\phi \in C^{\infty}_c (\Rn)$ and $u \in H^{\rho, p}(\Rn)$.
Then, $\phi * u \in  C^{\infty} (\Rn)$ and $D_{\rho} (\phi * u) = \phi * D_{\rho} u$.
\end{lemma}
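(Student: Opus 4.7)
The plan is to argue by duality: test both sides of the claimed identity against an arbitrary $\Psi \in C_c^\infty(\Rn,\Rn)$ and shift the convolution with $\phi$ onto $\Psi$ via Fubini, relying on the fact that $\diver_\rho$ commutes with convolution by a test function.

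Standard convolution estimates give $\phi*u \in C^\infty(\Rn)$ with $\partial^\alpha(\phi*u)=(\partial^\alpha\phi)*u\in L^p(\Rn)$ for every multi-index (by Young's inequality), so $\phi*u \in W^{1,p}(\Rn)$. Proposition~\ref{prop: W1p embed} then places $\phi*u$ in $H^{\rho,p}(\Rn)$ with $D_\rho(\phi*u) = \mathcal{G}_\rho(\phi*u)$. The right-hand side $\phi*D_\rho u$ also lies in $L^p(\Rn,\Rn)$ by Young's inequality (componentwise), so both sides of the target identity are $L^p$ functions and it suffices to verify equality against an arbitrary $\Psi\in C_c^\infty(\Rn,\Rn)$. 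Writing $\tilde\phi(x):=\phi(-x)$, using Definition~\ref{def: wnl gradient}, noting that Lemma~\ref{le:GuL1Linfty} applied componentwise yields $\diver_\rho\Psi \in L^1(\Rn)\cap L^\infty(\Rn)\subset L^{p'}(\Rn)$, and applying Fubini (valid because $|\phi|*|u|\in L^p(\Rn)$), I would rewrite the pairing as
\[
\langle D_\rho(\phi*u),\Psi\rangle = -\int_{\Rn}(\phi*u)(x)\,\diver_\rho\Psi(x)\,dx = -\int_{\Rn} u(y)\,(\tilde\phi*\diver_\rho\Psi)(y)\,dy.
\]

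The crux is the identity $\tilde\phi*\diver_\rho\Psi = \diver_\rho(\tilde\phi*\Psi)$, valid on $\Rn$ for $\Psi\in C_c^\infty(\Rn,\Rn)$. I would establish this by expanding $\diver_\rho(\tilde\phi*\Psi)(y)$ through the change of variables $z=y-w$ inside the defining integral and swapping integration order; an admissible dominator is $|\tilde\phi(y-v)|\min\{L,2\|\Psi\|_\infty|z|^{-1}\}\rho(z)$, with $L$ a Lipschitz constant of $\Psi$, which is integrable thanks to $\phi\in L^1(\Rn)$ and $\int_{\Rn}\min\{1,|z|^{-1}\}\rho(z)\,dz<\infty$ from \ref{itm:h0}. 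This bookkeeping is the main technical hurdle, but it is conceptually routine.

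Since $\tilde\phi*\Psi\in C_c^\infty(\Rn,\Rn)$ and $D_\rho u\in L^p(\Rn,\Rn)$, combining the previous two displays gives
\[
\langle D_\rho(\phi*u),\Psi\rangle = -\int_{\Rn} u(y)\,\diver_\rho(\tilde\phi*\Psi)(y)\,dy = \int_{\Rn} D_\rho u(z)\cdot(\tilde\phi*\Psi)(z)\,dz.
\]
A final Fubini, trivial because $\Psi$ has compact support, identifies the last integral with $\int_{\Rn}(\phi*D_\rho u)(y)\cdot\Psi(y)\,dy = \langle\phi*D_\rho u,\Psi\rangle$. Since $\Psi$ was arbitrary, $D_\rho(\phi*u)=\phi*D_\rho u$ holds as distributions, hence as $L^p$ functions.
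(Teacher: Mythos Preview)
Your proof is correct and follows essentially the same route as the paper: both establish the commutation $\tilde\phi*\diver_\rho\Psi=\diver_\rho(\tilde\phi*\Psi)$ by a direct change-of-variables computation (justified via the integrability in \ref{itm:h0}), then chain Fubini with the distributional definition of $D_\rho$ to pass the convolution across the pairing. The only cosmetic difference is your preliminary observation that $\phi*u\in W^{1,p}(\Rn)$ (hence $D_\rho(\phi*u)\in L^p$ a priori via Proposition~\ref{prop: W1p embed}); the paper works purely at the distributional level without this, but your addition does no harm.
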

\begin{proof}
Let $\Phi\in C_c^\infty(\Rn,\Rn)$ and $x \in \Rn$.
A straightforward calculation shows that
\begin{align*}
 \phi * \diver_{\rho} \Phi (x)  &= \int_{\Rn} \int_{\Rn} \phi (x-y) \frac{\Phi(y) -\Phi(z)}{|y-z|} \cdot\frac{y-z}{|y-z|} \rho (y-z) \, dz \, dy \\
& =  \int_{\Rn} \int_{\Rn} \phi (z') \frac{\Phi (x-z') - \Phi (y'-z')}{|x-y'|} \cdot\frac{x-y'}{|x-y'|} \rho (x-y') \, dz' \, dy' \\
  & = \diver_{\rho} (\phi * \Phi) (x),
\end{align*}
after applying the changes of variables $y' = x-y + z$ and $z' = x-y$, and having in mind that all integrals involved are absolutely convergent since $\Phi \in C^{\infty}_c (\Rn,\Rn)$ and $\rho$ satisfies \ref{itm:h0}.

Let $\tilde{\phi}$ be the reflection of $\phi$, i.e.,  $\tilde{\phi} (x) = \phi (-x)$.
Now, for any $u\in H^{\rho,p}(\Rn)$, by standard properties of convolution, $\phi * u \in  C^{\infty} (\Rn)$, and, by the previous computation, the definition of the distributional nonlocal gradient and Fubini's theorem (in particular, \cite[Prop.\ 4.16]{Brezis}), we have
\begin{align*}
\int_{\Rn} D_\rho (\phi *  u)(x) \cdot \Phi(x)\,dx &= -\int_{\Rn} \phi * u (x) \diver_\rho \Phi(x)\,dx \\
& = -\int_{\Rn} u(x) (\tilde{\phi}* \diver_\rho \Phi)(x)\,dx \\
&= -\int_{\Rn} u(x)\diver_\rho(\tilde{\phi} * \Phi) (x)\,dx\\
&= \int_{\Rn} D_\rho u(x)\cdot (\tilde{\phi} *\Phi)(x) \,dx \\
&=\int_{\Rn} (\phi*D_\rho u)(x)\cdot \Phi(x)\,dx,
\end{align*}
and having in mind that $\Phi$ is an arbitrary test function, this concludes the proof. 
\end{proof}

As a consequence of Lemma \ref{le:convolutiongradient} and the standard method of approximation by convolution (see, e.g., \cite[Th.\ C.16]{Leoni09}), we find the following result.

\begin{proposition}\label{pr:density} Let $\rho$ satisfy \ref{itm:h0}. 
\begin{enumerate}[label=(\roman*)]
\item For $p \in [1,\infty)$, it holds that $C^\infty(\Rn)\cap H^{\rho,p}(\Rn)$ is dense in $H^{\rho,p}(\Rn)$.

\item For all $u\in H^{\rho,\infty}(\Rn)$, there exists a sequence $\{ u_j \}_{j \in \N}$ in $C^\infty(\Rn)\cap H^{\rho,\infty}(\Rn)$ such that
\begin{align*}
 & u_j \to u \text{ a.e.,} \quad \Gcal_{\rho} u_j \to D_{\rho} u \text{ a.e.,} \\
 & \norm{u_j}_{L^{\infty} (\Rn)} \to \norm{u}_{L^{\infty} (\Rn)} \quad \text{and} \quad \norm{\Gcal_{\rho} u_j}_{L^{\infty} (\Rn, \Rn)} \to \norm{D_{\rho }u}_{L^{\infty} (\Rn, \Rn)} \qquad \text{as } j \to \infty .
\end{align*}
\end{enumerate}
\end{proposition}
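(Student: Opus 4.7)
The natural strategy is approximation by convolution with a standard mollifier. Fix a non-negative, radial $\phi \in C^\infty_c(\R^n)$ with $\supp \phi \subset B_1$ and $\int_{\R^n} \phi \,dx =1$, and set $\phi_j(x) := j^n \phi(jx)$, so that $\{\phi_j\}_{j\in \N}$ is an approximation of the identity. For $u \in H^{\rho,p}(\R^n)$ (with $p \in [1,\infty]$), define $u_j := \phi_j * u$. By standard properties of convolution, $u_j \in C^\infty(\R^n)$, and Young's inequality gives $\|u_j\|_{L^p(\R^n)} \leq \|u\|_{L^p(\R^n)}$. Moreover, Lemma~\ref{le:convolutiongradient} applies and yields the key identity $D_\rho u_j = \phi_j * D_\rho u$, which, via Young again, belongs to $L^p(\R^n, \R^n)$ with $\|D_\rho u_j\|_{L^p(\R^n, \R^n)} \leq \|D_\rho u\|_{L^p(\R^n, \R^n)}$. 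Hence $u_j \in C^\infty(\R^n) \cap H^{\rho,p}(\R^n)$.

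For part (i), with $p \in [1,\infty)$, the standard theory of approximations of the identity (e.g., \cite[Thm.~C.16]{Leoni09}) gives $u_j \to u$ in $L^p(\R^n)$ and, applied to the vector-valued function $D_\rho u \in L^p(\R^n,\R^n)$, also $D_\rho u_j = \phi_j * D_\rho u \to D_\rho u$ in $L^p(\R^n,\R^n)$. Summing these two convergences shows $u_j \to u$ in $H^{\rho,p}(\R^n)$.

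For part (ii), with $p=\infty$, the same choice $u_j = \phi_j * u$ works, but $L^\infty$-convergence fails in general, so we argue with pointwise convergence and control of norms. Since $u, D_\rho u \in L^\infty$, the Lebesgue differentiation theorem implies that $u_j(x) \to u(x)$ and $\phi_j * D_\rho u(x) \to D_\rho u(x)$ at every Lebesgue point, hence a.e.\ Combined with $D_\rho u_j = \phi_j * D_\rho u$ from Lemma~\ref{le:convolutiongradient}, and the fact that $u_j \in C^\infty \cap W^{1,\infty}(\R^n)$ (since $\nabla u_j = \nabla \phi_j * u$ is bounded) so that Proposition~\ref{prop: W1p embed} gives $\Gcal_\rho u_j = D_\rho u_j$, we obtain $\Gcal_\rho u_j \to D_\rho u$ a.e. For the norm convergence, Young's inequality gives $\limsup_j \|u_j\|_{L^\infty(\R^n)} \leq \|u\|_{L^\infty(\R^n)}$, while the a.e.\ convergence $u_j \to u$ yields $\|u\|_{L^\infty(\R^n)} = \esssup |u| \leq \liminf_j \|u_j\|_{L^\infty(\R^n)}$, giving equality in the limit. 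The identical argument applied to $D_\rho u_j$ completes the proof.

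No serious obstacle is anticipated: the heart of the matter is the commutation rule $D_\rho(\phi_j * u) = \phi_j * D_\rho u$ already provided by Lemma~\ref{le:convolutiongradient}, which reduces everything to classical mollifier theory. The only minor subtlety is the identification $\Gcal_\rho u_j = D_\rho u_j$ needed in (ii), which is handled by Proposition~\ref{prop: W1p embed} since the mollified functions automatically lie in $W^{1,\infty}(\R^n)$.
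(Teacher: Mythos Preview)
Your proposal is correct and matches exactly the approach the paper intends: the paper simply states that the result follows from Lemma~\ref{le:convolutiongradient} together with the standard mollification theory (citing \cite[Th.~C.16]{Leoni09}), and your argument fills in precisely those details. The handling of part~(ii), including the identification $\Gcal_\rho u_j = D_\rho u_j$ via Proposition~\ref{prop: W1p embed} and the $\limsup$/$\liminf$ argument for the $L^\infty$ norms, is the expected elaboration.
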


The next lemma is a Leibniz rule in this nonlocal context, which is of interest in own right, and also needed in the proof of Theorem \ref{th:density}.

\begin{lemma}\label{le:Leibniz}
Let $p \in[1, \infty]$ and let $\rho$ satisfy \ref{itm:h0}.
Let $g\in H^{\rho,p}(\Rn)$ and $f \in C_c^{\infty}(\Rn)$.
Then, $fg \in H^{\rho,p}(\Rn)$ and 
\begin{equation}\label{eq:Leibniz}
 D_\rho (fg) = f D_\rho g + K_f(g) ,
\end{equation}
with 
\[
 K_f(g) (x) =\int_{\Rn} \frac{f(x)-f(y)}{|x-y|}g(y) \frac{x-y}{|x-y|}\rho(x-y)\,dy, \quad \text{a.e.  }x\in \Rn.
\]
Moreover, there exists $C=C(\rho)>0$ such that
\begin{equation}\label{eq:Kfg}
 \| K_f(g)\|_{L^p(\Rn,\Rn)} \leq C \| f\|_{W^{1,\infty}(\Rn)} \|g\|_{L^p(\Rn)} 
\end{equation}
and, if in addition $\rho \in L^1 (\Rn)$,
\begin{equation}\label{eq:Kfg2}
 \| K_f(g)\|_{L^p(\Rn,\Rn)} \leq C \| \nabla f\|_{L^{\infty}(\Rn, \Rn)} \|g\|_{L^p(\Rn)} .
\end{equation}
\end{lemma}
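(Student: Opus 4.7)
The plan is to first establish the $L^p$-bounds \eqref{eq:Kfg} and \eqref{eq:Kfg2} on $K_f(g)$, and then to derive the Leibniz identity \eqref{eq:Leibniz} by a duality argument against test vector fields.

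For the bounds, I split the integral defining $K_f(g)(x)$ at $|x-y|=1$. On the near region $|x-y|<1$, since $f$ is Lipschitz with constant $\|\nabla f\|_{L^\infty}$, the integrand is dominated by $\|\nabla f\|_{L^\infty}|g(y)|\rho(x-y)$; on the far region $|x-y|\geq 1$, the crude bound $|f(x)-f(y)|\leq 2\|f\|_{L^\infty}$ dominates it by $2\|f\|_{L^\infty}|g(y)|\rho(x-y)/|x-y|$. Applying Minkowski's integral inequality to each piece (with the obvious $L^\infty$ monotonicity when $p=\infty$) and invoking \eqref{eq:H0cor} gives
\[
\|K_f(g)\|_{L^p(\Rn,\Rn)}\leq\Bigl(\|\nabla f\|_{L^\infty}\int_{B_1}\rho(h)\,dh+2\|f\|_{L^\infty}\int_{B_1^c}\frac{\rho(h)}{|h|}\,dh\Bigr)\|g\|_{L^p(\Rn)},
\]
which is \eqref{eq:Kfg}. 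If additionally $\rho\in L^1(\Rn)$, the Lipschitz estimate applies on all of $\Rn$, so $|K_f(g)(x)|\leq\|\nabla f\|_{L^\infty}(\rho*|g|)(x)$, and Young's inequality yields \eqref{eq:Kfg2}.

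For the identity, I test against $\Phi\in C_c^\infty(\Rn,\Rn)$. The algebraic split
\[
f(x)\Phi(x)-f(y)\Phi(y)=f(x)\bigl(\Phi(x)-\Phi(y)\bigr)+\bigl(f(x)-f(y)\bigr)\Phi(y)
\]
inside the definition of $\diver_\rho(f\Phi)(x)$ gives the pointwise identity
\[
\diver_\rho(f\Phi)(x)=f(x)\diver_\rho\Phi(x)+L_f(\Phi)(x),\quad L_f(\Phi)(x):=\int_{\Rn}\frac{f(x)-f(y)}{|x-y|}\Phi(y)\cdot\frac{x-y}{|x-y|}\rho(x-y)\,dy,
\]
with all integrals absolutely convergent since $f\Phi\in C_c^\infty(\Rn,\Rn)$. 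Since $f\Phi$ is an admissible test field for $D_\rho g$, the definition of the distributional nonlocal gradient and this identity yield
\[
\langle D_\rho(fg),\Phi\rangle=-\int_{\Rn} g\,\diver_\rho(f\Phi)\,dx+\int_{\Rn} g\,L_f(\Phi)\,dx=\int_{\Rn} fD_\rho g\cdot\Phi\,dx+\int_{\Rn} g\,L_f(\Phi)\,dx.
\]
Applying Fubini to the double integral defining the last term and performing the change $x\leftrightarrow y$, the symmetry $\rho(y-x)=\rho(x-y)$ combined with the two sign flips in $(f(x)-f(y))/|x-y|$ and $(x-y)/|x-y|$ identify it with $\int_{\Rn} K_f(g)\cdot\Phi\,dx$. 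This proves \eqref{eq:Leibniz} as a distributional identity, which becomes an $L^p$-equality because the right-hand side lies in $L^p(\Rn,\Rn)$ by the first step; in particular $fg\in H^{\rho,p}(\Rn)$.

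The main obstacle is justifying Fubini in the preceding step. Since $L_f(\Phi)$ has the same form as $K_f(g)$ with the roles of $g$ and $\Phi$ swapped, the bound from the first step gives $\|L_f(\Phi)\|_{L^{p'}(\Rn,\Rn)}\leq C\|f\|_{W^{1,\infty}}\|\Phi\|_{L^{p'}(\Rn,\Rn)}<\infty$. Hölder's inequality then yields the required absolute integrability $\int_{\Rn}|g||L_f(\Phi)|\,dx\leq\|g\|_{L^p(\Rn)}\|L_f(\Phi)\|_{L^{p'}(\Rn,\Rn)}<\infty$, legitimizing the interchange of order of integration.
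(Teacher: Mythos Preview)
Your proof is correct and follows essentially the same approach as the paper: both establish the pointwise identity $\diver_\rho(f\Phi)=f\,\diver_\rho\Phi+L_f(\Phi)$, test against $g$, use the definition of $D_\rho g$ with the admissible test field $f\Phi$, and swap the order of integration in the remaining term to recognize $K_f(g)$; the $L^p$-bounds are obtained in both cases from the pointwise domination $\bigl|\tfrac{f(x)-f(y)}{|x-y|}\bigr|\rho(x-y)\leq C\|f\|_{W^{1,\infty}}\min\{1,|x-y|^{-1}\}\rho(x-y)$ together with Young's inequality. Your explicit justification of the Fubini step via the $L^{p'}$-bound on $L_f(\Phi)$ is a welcome addition that the paper leaves implicit.
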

\begin{proof}
Clearly, $fg \in L^p (\Rn)$.
Let $\Phi\in C_c^\infty(\Rn,\Rn)$.
It is immediate to check that for all $x \in \Rn$,
\[
 \diver_{\rho} (f \Phi) (x) = f(x) \diver_{\rho} \Phi (x) + \int_{\Rn} \frac{f(x)-f(y)}{|x-y|} \frac{x-y}{|x-y|} \rho(x-y) \cdot \Phi (y) \,dy .
\]
Thus,
\begin{align*}
 - \int_{\Rn} & f(x) g(x) \diver_{\rho} \Phi (x) \, dx \\
 & = - \int_{\Rn} g(x) \diver_{\rho} (f \Phi) (x) \, dx + \int_{\Rn} \int_{\Rn} g(x) \frac{f(x)-f(y)}{|x-y|} \frac{x-y}{|x-y|} \rho(x-y) \cdot \Phi (y) \, dy \, dx \\
 & = \int_{\Rn} f(x) D_{\rho} g (x) \cdot \Phi (x) \, dx + \int_{\Rn} K_f (g) (x) \cdot \Phi (x) \, dx ,
\end{align*}
which shows \eqref{eq:Leibniz}.
The bound \eqref{eq:Kfg} follows from Young's convolution inequality by using \ref{itm:h0} and the estimate
\[
\abslr{\frac{f(x)-f(y)}{|x-y|} \frac{x-y}{|x-y|} \rho(x-y)} \leq C\norm{f}_{W^{1,\infty}(\Rn)} \min\{1,\abs{x-y}^{-1}\}\rho(x-y),
\]
while \eqref{eq:Kfg2} uses
\[
 \abslr{\frac{f(x)-f(y)}{|x-y|} \frac{x-y}{|x-y|} \rho(x-y)} \leq C\norm{\nabla f}_{L^{\infty}(\Rn, \Rn)} \rho(x-y).
\]
Therefore, $D_\rho (fg) \in L^p (\Rn)$ and $fg \in H^{\rho,p}(\R^n)$.
\end{proof}

The following result explores the density of $C^{\infty}_c$ functions, whose proof utilizes well-known mollification and cut-off arguments, see also~\cite[Th.\ 1]{CuKrSc23} and \cite[Th.\ 3.3]{HaT23}.

\begin{theorem}\label{th:density}
Let $\rho$ satisfy \ref{itm:h0}.
\begin{enumerate}[label=(\roman*)]
\item\label{item:density1} Let $1\le p<\infty$. Then, $C_c^\infty(\Rn)$ is dense in $H^{\rho,p}(\R^n)$.

\item\label{item:density2} For all $u\in H^{\rho,\infty}(\R^n)$, there exists a sequence $\{ u_j \}_{j \in \N}$ in $C^\infty_c (\Rn)$ such that
\begin{align*}
 & u_j \to u \text{ a.e.,} \quad \Gcal_{\rho} u_j \to D_{\rho} u \text{ a.e.,} \\
 & \norm{u_j}_{L^{\infty} (\Rn)} \to \norm{u}_{L^{\infty} (\Rn)} \quad \text{and} \quad \norm{\Gcal_{\rho} u_j}_{L^{\infty} (\Rn, \Rn)} \to \norm{D_{\rho }u}_{L^{\infty} (\Rn, \Rn)} \qquad \text{as } j \to \infty .
\end{align*}

\item\label{item:density3} Let $1\le p < \infty$ and let $\O \subset \Rn$ be an open and bounded set with a Lipschitz boundary. Then, $C_c^\infty(\O)$ is dense in $H^{\rho,p}_0(\O)$.

\end{enumerate}
\end{theorem}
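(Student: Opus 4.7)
The strategy for all three parts is a two-step approximation: reduce to smooth functions via Proposition \ref{pr:density}, then apply a spatial cutoff (and, for (iii), an inward translation) followed by mollification, using the Leibniz rule of Lemma \ref{le:Leibniz} as the main algebraic tool. For part (i), by Proposition \ref{pr:density}(i) it suffices to approximate $u \in C^\infty(\Rn) \cap H^{\rho,p}(\Rn)$. Pick $\eta \in C_c^\infty(\Rn)$ with $\eta \equiv 1$ on $B_1$, $\supp \eta \subset B_2$, $0 \le \eta \le 1$, and set $\eta_R(x) = \eta(x/R)$, so $\|\nabla \eta_R\|_{L^\infty} \le C/R$ and $u_R := \eta_R u \in C_c^\infty(\Rn)$. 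The Leibniz rule gives
\[
 D_\rho u_R - D_\rho u = (\eta_R - 1) D_\rho u + K_{\eta_R}(u),
\]
and dominated convergence handles $\|u_R - u\|_{L^p} \to 0$ and $\|(\eta_R-1) D_\rho u\|_{L^p} \to 0$. The decisive step is $\|K_{\eta_R}(u)\|_{L^p} \to 0$, which I obtain by splitting the defining integral at $|x-y|=1$: on $\{|x-y|\le 1\}$, the Lipschitz bound $|\eta_R(x)-\eta_R(y)|/|x-y| \le C/R$ combined with Young's inequality using $\rho\mathbbm{1}_{B_1} \in L^1$ yields an $O(1/R)$ contribution, while on $\{|x-y|>1\}$ the integrand is dominated by $2|u(y)|\rho(x-y)/|x-y|$, which is an $L^p$ function of $x$ by Young's inequality and $\rho(\cdot)/|\cdot|\,\mathbbm{1}_{B_1^c}\in L^1$ (from \ref{itm:h0}); since $\eta_R(x)-\eta_R(y) \to 0$ pointwise as $R\to\infty$, dominated convergence in $L^p$ concludes.

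For part (ii), apply Proposition \ref{pr:density}(ii) to obtain $u_k \in C^\infty(\Rn) \cap H^{\rho,\infty}(\Rn)$ with the four required convergences, and then cut off: $u_k^R := \eta_R u_k \in C_c^\infty(\Rn)$. The pointwise a.e.\ convergences $u_k^R \to u_k$ and $\Gcal_\rho u_k^R \to \Gcal_\rho u_k$ as $R\to \infty$ follow from the Leibniz decomposition and pointwise dominated convergence inside the integral defining $K_{\eta_R}(u_k)$. For norms, $\|u_k^R\|_{L^\infty} \le \|u_k\|_{L^\infty}$ is immediate and Fatou supplies the matching lower bound from a.e.\ convergence. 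The corresponding statement for gradients rests on a localization: on $B_{R/2}$ one has $\eta_R \equiv 1$, so $\Gcal_\rho u_k^R = \Gcal_\rho u_k + K_{\eta_R}(u_k)$, and in the integrand defining $K_{\eta_R}(u_k)(x)$ for $x \in B_{R/2}$ the factor $1 - \eta_R(y)$ forces $|y|\ge R$ and hence $|x-y| \ge R/2$, giving $|K_{\eta_R}(u_k)(x)| \le \|u_k\|_{L^\infty}\int_{|z|\ge R/2}\rho(z)/|z|\,dz \to 0$. Thus $\Gcal_\rho u_k^R$ approaches $\Gcal_\rho u_k$ uniformly on any region where the essential supremum of $\Gcal_\rho u_k$ is essentially attained; combining Fatou with a diagonal extraction in $(k,R)$ delivers the required norm convergences.

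For part (iii), cover $\overline{\Omega}$ by open sets $U_0 \Subset \Omega$ and $U_1, \ldots, U_N$, each $U_i$ with $i \ge 1$ supporting a Lipschitz chart in which $\Omega$ is locally the epigraph of a Lipschitz function, with corresponding inward direction $\nu_i$. Take a subordinate smooth partition of unity $\{\zeta_i\}_{i=0}^N$ on $\overline{\Omega}$; by Lemma \ref{le:Leibniz}, each $\zeta_i u \in H^{\rho,p}(\Rn)$ is supported in $U_i \cap \Omega$, and $u = \sum_i \zeta_i u$. The interior piece $\zeta_0 u$ has compact support inside $\Omega$, so $\zeta_0 u * \phi_\epsilon \in C_c^\infty(\Omega)$ for $\epsilon$ small enough and converges to $\zeta_0 u$ in $H^{\rho,p}(\Rn)$ by Lemma \ref{le:convolutiongradient}. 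For $i \ge 1$, define the inward translation $T_\tau(\zeta_i u)(x) := (\zeta_i u)(x - \tau \nu_i)$; the Lipschitz chart guarantees that $\supp T_\tau(\zeta_i u)$ sits at positive distance from $\partial \Omega$ for $\tau > 0$, so $T_\tau(\zeta_i u) * \phi_\epsilon \in C_c^\infty(\Omega)$ whenever $\epsilon$ is small relative to $\tau$. Translation continuity of $L^p$-norms together with the commutation $D_\rho T_\tau = T_\tau D_\rho$ (a consequence of the radiality of $\rho$) yields $T_\tau(\zeta_i u) \to \zeta_i u$ in $H^{\rho,p}(\Rn)$ as $\tau \to 0^+$; combining with mollification via a diagonal in $(\tau,\epsilon) \to (0,0)$ and summing over $i$ produces the required $C_c^\infty(\Omega)$ approximation. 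The most delicate technical point throughout is the $\|K_{\eta_R}(u)\|_{L^p} \to 0$ estimate in (i), since a global Minkowski bound of the form $\|u\|_{L^p}\int \min\{C/R, 2/|z|\}\rho(z)\,dz$ need not decay under only \ref{itm:h0}; only the careful near/far splitting accomplishes the decay.
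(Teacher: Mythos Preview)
Parts (i) and (iii) are correct. Part (iii) follows the paper's route exactly (partition of unity, inward translation, mollification). Part (i) differs from the paper: the paper first reduces to the case $\rho\in L^1(\Rn)$ via Proposition~\ref{prop:carryover} and Example~\ref{ex:carryover}\,\ref{itm:carryover}, after which the single bound \eqref{eq:Kfg2} gives $\|K_{\eta_R}(u)\|_{L^p}\le C R^{-1}\|u\|_{L^p}$ immediately. Your near/far splitting together with dominated convergence avoids that reduction and is a legitimate alternative; in particular your observation that the far piece is dominated by a fixed $L^p$ function and tends to zero pointwise is what makes it work.

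Part (ii), however, has a genuine gap. Your control of $K_{\eta_R}(u_k)$ on $B_{R/2}$, together with pointwise convergence, correctly delivers the lower bound $\liminf_{R}\|\Gcal_\rho u_k^R\|_{L^\infty}\ge\|\Gcal_\rho u_k\|_{L^\infty}$. But you never establish the matching \emph{upper} bound $\limsup_{R}\|\Gcal_\rho u_k^R\|_{L^\infty}\le\|\Gcal_\rho u_k\|_{L^\infty}$: knowing that $\Gcal_\rho u_k^R\to\Gcal_\rho u_k$ uniformly on bounded sets says nothing about $\sup_{|x|\ge R/2}|\Gcal_\rho u_k^R(x)|$. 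With your two-region split the far contribution to $|K_{\eta_R}(u_k)(x)|$ is only bounded by $2\|u_k\|_\infty\int_{|z|>1}\rho(z)/|z|\,dz$, which does not decay in $R$, so the phrase ``where the essential supremum is essentially attained'' addresses the wrong direction.

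Two fixes. (a) Do as the paper does: reduce to $\rho\in L^1(\Rn)$ via Proposition~\ref{prop:carryover}, after which \eqref{eq:Kfg2} yields $\|K_{\eta_R}(u_k)\|_{L^\infty}\le C R^{-1}\|u_k\|_{L^\infty}\to 0$ and the upper bound follows from $\Gcal_\rho u_k^R=\eta_R\Gcal_\rho u_k+K_{\eta_R}(u_k)$. (b) Keep your direct approach but refine the split to three regions $|z|\le 1$, $1<|z|\le\sqrt{R}$, $|z|>\sqrt{R}$: the first gives $O(R^{-1})$ as before; on the middle region $|\eta_R(x)-\eta_R(y)|/|x-y|\le C/R$ still applies and $\int_{1<|z|\le\sqrt{R}}\rho(z)\,dz\le\sqrt{R}\int_{|z|>1}\rho(z)/|z|\,dz$ yields $O(R^{-1/2})$; the outer region contributes $2\|u_k\|_\infty\int_{|z|>\sqrt{R}}\rho(z)/|z|\,dz\to 0$. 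This gives $\|K_{\eta_R}(u_k)\|_{L^\infty}\to 0$ globally under \ref{itm:h0} alone, completing your argument.
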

\begin{proof}
Due to Proposition~\ref{prop:carryover} and Example~\ref{ex:carryover}\,\ref{itm:carryover} below, we assume without loss of generality that $\rho \in L^1(\Rn)$. Parts \ref{item:density1} and \ref{item:density2} can be proved as in \cite[Th.\ 1]{CuKrSc23}.
Indeed, consider $\chi\in C_c^\infty(\Rn)$ with $0 \leq \chi \leq 1$ everywhere, $\chi = 1$ in $B_1$, and define $\chi_k:=\chi(\frac{\cdot}{k})$ for $k\in \N$.
Thanks to Proposition \ref{pr:density}, it is enough to construct an approximating sequence for $u \in C^{\infty} (\Rn)\cap H^{\rho,p}(\Rn)$. 
Then, the sequence $\{ \chi_k u \}_{k \in \N}$ is in $C_c^\infty(\Rn)$.
Moreover, for $p \in [1, \infty)$ we have $\chi_k u\to u$ in $L^p(\Rn)$ as $k\to \infty$ and, by Lemma \ref{le:Leibniz},
\[
 \|D_\rho u- D_\rho(\chi_k u)\|_{L^p(\Rn,\Rn)} \leq \|(1-\chi_k)D_\rho u\|_{L^p(\Rn,\Rn)}+C\frac{\| \nabla \chi \|_{L^{\infty}(\Rn, \Rn)}}{k}\|u\|_{L^p(\Rn)}\to 0 \text{  as  }k\to\infty ,
\]
which yields \ref{item:density1}.
For $p = \infty$, we have $|\chi_k u| \leq |u|$ and $\chi_k u\to u$ a.e.
In addition, by Lemma \ref{le:Leibniz}, $D_{\rho} (\chi_k u) = \chi_k D_{\rho} u + K_{\chi_k} (u)$. Because $|\chi_k D_{\rho} u| \leq |D_{\rho} u|$ on $\R^n$,  $\chi_k D_{\rho} u\to D_{\rho} u$ a.e., and
\[
 \norm{K_{\chi_k} (u)}_{L^{\infty} (\Rn, \Rn)} \leq \frac{C}{k} \norm{\nabla \chi}_{L^{\infty} (\Rn, \Rn)} \norm{u}_{L^{\infty} (\Rn)} \to 0 \quad \text{as } k \to \infty ,
\]
we find that \ref{item:density2} holds. \smallskip

Part \ref{item:density3}. We first show that for each $u \in H_0^{\rho,p}(\O)$ and $\epsilon >0$, there exists a $\tilde{u} \in H^{\rho,p}(\Rn)$ with
\begin{equation}\label{eq:shrinkgoal}
\supp \tilde{u} \subset \Omega \quad \text{and} \quad \norm{u-\tilde{u}}_{H^{\rho,p}(\R^n)} \leq \frac{\epsilon}{2}.
\end{equation}
To this aim, we may use the fact that the boundary of $\Omega$ is Lipschitz, to find a partition of unity $\chi_0,\dots,\chi_N \in C_c^{\infty}(\R^n)$ subject to $\O$ and vectors $\zeta_1,\dots,\zeta_N \in \R^n$ such that
\begin{equation}\label{eq:partitionunity}
\sum_{i=0}^{N} \chi_i = 1 \ \text{on $\O$}, \quad \chi_0 \in C_c^{\infty}(\O),
\end{equation}
and
\begin{equation}\label{eq:shrinksupp}
(\supp \chi_i \cap \overline{\O}) +\lambda\zeta_i \subset \O \quad \text{for all $i=1,\dots,N$ and $\lambda >0$ small enough.}
\end{equation}
For such $\lambda$, we define the function
\[
\tilde{u}:=\chi_0 u +\sum_{i=1}^N \tau_{\lambda\zeta_i}(\chi_i u),
\]
where $\tau_\zeta(v):=v(\cdot - \zeta)$ for $v:\Rn \to \R$ denotes translation by the vector $\zeta \in \Rn$. In view of the Leibniz rule from Lemma~\ref{le:Leibniz} and the translation invariance of $D_\rho$, we deduce that $\tilde{u} \in H^{\rho,p}(\Rn)$. Moreover, due to \eqref{eq:shrinksupp}, we find that $\supp \tilde{u} \subset \O$, which guarantees the first condition in \eqref{eq:shrinkgoal}.

For the norm estimate, we may use the $L^p$-continuity of the translation operator and the translation invariance of $D_\rho$, to find a $\lambda=\lambda_\epsilon$ with
\[
\norm{u-\tilde{u}}_{H^{\rho,p}(\Rn)}^p = \normB{\sum_{i=1}^N \chi_i u - \tau_{\lambda_\epsilon \zeta_i}(\chi_i u)}_{L^p(\Rn)}^p+ \normB{\sum_{i=1}^N D_\rho(\chi_i u) - \tau_{\lambda_\epsilon \zeta_i}D_\rho(\chi_i u)}_{L^p(\Rn,\Rn)}^p \leq \left(\frac{\epsilon}{2}\right)^p,
\]
where we have used $u=\sum_{i=0}^N \chi_i u$ in light of the first part of \eqref{eq:partitionunity} and the fact that $u \in H^{\rho,p}_0(\O)$. This proves \eqref{eq:shrinkgoal}. By mollifying $\tilde{u}$ suitably, we can find a $\phi \in C_c^{\infty}(\Omega)$ such that $\norm{\tilde{u}-\phi}_{H^{\rho,p}(\Rn)}\leq \epsilon/2$, which yields
\[
\norm{u-\phi}_{H^{\rho,p}(\Rn)} \leq \norm{u-\tilde{u}}_{H^{\rho,p}(\Rn)}+\norm{\tilde{u}-\phi}_{H^{\rho,p}(\Rn)} \leq \epsilon,
\]
and finishes the proof.
\end{proof}

\subsection{Equivalence of spaces with different kernels}\label{se:equivalence}

Here, we provide a sufficient condition so that two kernels give rise to the same space.
This condition describes that the two kernels behave similarly at the origin.
Moreover, one can carry over Poincar\'{e} inequalities from one gradient to the other.

\begin{proposition}\label{prop:carryover}
Let $\O \subset \Rn$ be open.
Let $\rho_1, \rho_2$ satisfy \ref{itm:h0} and assume that $(\rho_1-\rho_2)/\abs{\cdot} \in L^1 (\Rn)$. Then, the following two statements hold:

\begin{enumerate}[label = (\roman*)]
\item\label{item:carry1} Let $p \in [1, \infty]$.
The identity $H^{\rho_1,p}_0(\Omega)= H^{\rho_2,p}_0(\Omega)$ holds with equivalent norms.

\item\label{item:carry2}
Let $p \in (1, \infty)$ and assume that $\O$ is bounded.
If there is a $C_1>0$ such that for all $u \in H^{\rho_1,p}_0(\Omega)$,
\begin{equation}\label{eq:poincareassumption}
\norm{u}_{L^p(\Omega)} \leq C_1\norm{D_{\rho_1}u}_{L^p(\R^n,\R^n)},
\end{equation}
then there is a $C_2>0$ such that for all $u \in H^{\rho_2,p}_0(\Omega)$,
\[
\norm{u}_{L^p(\Omega)} \leq C_2\norm{D_{\rho_2}u}_{L^p(\R^n,\R^n)}.
\]
\end{enumerate}
\end{proposition}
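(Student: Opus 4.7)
Set $\sigma := \rho_1 - \rho_2$ and introduce the vector kernel $\Phi(z) := \sigma(z)\,z/|z|^2$, which by hypothesis lies in $L^1(\R^n;\R^n)$ with $\|\Phi\|_{L^1} = \int_{\R^n}|\sigma(z)|/|z|\,dz$. The central algebraic identity, which I would establish first on smooth compactly supported functions, is
\[
\mathcal{G}_{\rho_1}\phi(x) - \mathcal{G}_{\rho_2}\phi(x) \;=\; \int_{\R^n}\frac{\phi(x)-\phi(y)}{|x-y|^2}(x-y)\sigma(x-y)\,dy \;=\; -(\phi * \Phi)(x), \qquad \phi \in C_c^\infty(\R^n).
\]
Splitting the numerator $\phi(x)-\phi(y)$ is justified by the absolute integrability coming from $\sigma \in L^1_{\mathrm{loc}}$ together with $|\sigma|/|\cdot| \in L^1$; the resulting constant-in-$y$ piece $\phi(x)\int z\sigma(z)/|z|^2\,dz$ vanishes by oddness of the integrand. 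Young's convolution inequality then yields $\|\mathcal{G}_{\rho_1}\phi - \mathcal{G}_{\rho_2}\phi\|_{L^p(\R^n)} \le \|\Phi\|_{L^1}\|\phi\|_{L^p}$.

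For part (i), I would extend this identity to an arbitrary $u \in H^{\rho_1,p}(\R^n)$ by approximation. When $p<\infty$, Theorem~\ref{th:density}\ref{item:density1} supplies $\phi_n \in C_c^\infty(\R^n)$ with $\phi_n \to u$ in $H^{\rho_1,p}$; Young's inequality then gives $\phi_n * \Phi \to u * \Phi$ in $L^p$, and passing to the limit in $\mathcal{D}'$ yields $D_{\rho_1}u = D_{\rho_2}u - u*\Phi$ as elements of $L^p(\R^n;\R^n)$. For $p=\infty$, Proposition~\ref{pr:density}(ii) produces approximants with a.e.\ convergence plus uniform $L^\infty$-bounds, sufficient for dominated convergence in both the convolution and the distributional duality. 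The resulting estimate $\|D_{\rho_2}u\|_{L^p} \le \|D_{\rho_1}u\|_{L^p} + \|\Phi\|_{L^1}\|u\|_{L^p}$ together with its symmetric analogue (swapping $\rho_1,\rho_2$ leaves $\|\Phi\|_{L^1}$ unchanged) gives $H^{\rho_1,p}(\R^n) = H^{\rho_2,p}(\R^n)$ with equivalent norms; since the condition $u = 0$ a.e.\ on $\Omega^c$ is common to both $H^{\rho_i,p}_0(\Omega)$, part (i) follows.

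For part (ii), take $u \in H^{\rho_2,p}_0(\Omega) = H^{\rho_1,p}_0(\Omega)$, use $\|u\|_{L^p(\Omega)} = \|u\|_{L^p(\R^n)}$, and combine the Poincar\'e hypothesis for $\rho_1$ with the identity from (i):
\[
\|u\|_{L^p(\Omega)} \;\le\; C_1\|D_{\rho_1}u\|_{L^p(\R^n)} \;\le\; C_1\|D_{\rho_2}u\|_{L^p(\R^n)} + C_1\|\Phi\|_{L^1}\|u\|_{L^p(\Omega)}.
\]
If $C_1\|\Phi\|_{L^1} < 1$ the last term absorbs directly, giving Poincar\'e for $\rho_2$ with constant $C_2 = C_1/(1 - C_1\|\Phi\|_{L^1})$. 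The main obstacle is the regime $C_1\|\Phi\|_{L^1}\ge 1$, which I would handle via compactness. Approximating $\Phi$ in $L^1$ by $C_c^\infty$ kernels $\Phi_\varepsilon$ shows that $Ku := u * \Phi$ is a compact operator from $L^p(\Omega)$ (viewed as $L^p$-functions supported in $\bar\Omega$) to $L^p(\R^n;\R^n)$: the smoothed operators $K_\varepsilon u = u * \Phi_\varepsilon$ map bounded sets in $L^p(\Omega)$ into sets with uniformly bounded $W^{1,\infty}$-norm and common compact support (hence precompact in $L^p$ by Rellich--Kondrachov), and $K$ is their uniform operator-norm limit. A standard contradiction then closes the proof: a normalized sequence $u_n$ with $\|D_{\rho_2}u_n\|_{L^p}\to 0$ would, by reflexivity of $L^p$ and compactness of $K$, admit a subsequence with $u_n \weakc u$ weakly and $Ku_n \to Ku$ strongly; then $D_{\rho_1}u_n = D_{\rho_2}u_n - Ku_n \to -Ku$ strongly in $L^p$, and Poincar\'e for $\rho_1$ applied to $u_n - u_m$ upgrades weak to strong convergence in $L^p$, producing a limit $u$ with $\|u\|_{L^p} = 1$, supported in $\bar\Omega$, with $D_{\rho_2}u = 0$. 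Finally, a Fourier uniqueness argument (Paley--Wiener analyticity of $\hat u$ combined with nonvanishing of the multiplier $\lambda_{\rho_2}$ on a neighbourhood of the origin, as made visible by the small-$\xi$ expansion of the formula in Lemma~\ref{le:lamda}) forces $u \equiv 0$, the desired contradiction.
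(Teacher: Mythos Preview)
Your central identity $D_{\rho_1}u = D_{\rho_2}u - u*\Phi$ and the resulting norm estimates are exactly what the paper uses (their kernel is $F = -\Phi$). However, your route to this identity in Part~\ref{item:carry1} is circular: you invoke Theorem~\ref{th:density}\ref{item:density1} to obtain $C_c^\infty$ approximants of a general $u\in H^{\rho_1,p}(\Rn)$, but the paper's proof of Theorem~\ref{th:density} begins by citing Proposition~\ref{prop:carryover} to reduce to the case $\rho\in L^1(\Rn)$; without that reduction the cut-off argument there does not close (the bound \eqref{eq:Kfg2} needs $\rho\in L^1$). The paper avoids the loop by bypassing approximation altogether: it proves the dual identity $\Div_{\rho_1}\Psi = \Div_{\rho_2}\Psi + F*\Psi$ for test fields $\Psi\in C_c^\infty(\Rn,\Rn)$, which immediately yields $D_{\rho_1}u = D_{\rho_2}u + F*u$ for every $u\in L^p$ directly from Definition~\ref{def: wnl gradient}. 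This is both shorter and logically self-contained.

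For Part~\ref{item:carry2}, your compactness-contradiction scheme matches the paper's, but the endgame is sketchy in two places. First, your ``small-$\xi$ expansion'' of $\lambda_{\rho_2}$ relies on a dominated-convergence argument that requires $\rho_2\in L^1(\Rn)$, which is not assumed; the paper instead shows $\lambda_{\rho_2}$ is not a.e.\ zero by exhibiting an explicit odd $\phi\in C_c^\infty$ with $\Gcal_{\rho_2}\phi(0)>0$. Second, the Fourier identity $\widehat{D_{\rho_2}u}=\lambda_{\rho_2}\widehat{u}$ has only been established for $u\in C_c^\infty$ (Lemma~\ref{le:lamda}), so applying it to your limit $u\in L^p$ needs justification; the paper reduces to the $C_c^\infty$ case cleanly via mollification, using that $\varphi*u\in C_c^\infty$ (since $\Omega$ is bounded) together with Lemma~\ref{le:convolutiongradient} to get $\Gcal_{\rho_2}(\varphi*u)=\varphi*D_{\rho_2}u=0$.
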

\begin{proof}
Part \ref{item:carry1}. Set 
\[
F(x)=\frac{x}{\abs{x}}\frac{\rho_2(x)-\rho_1(x)}{\abs{x}} .
\]
Then, we find that $F \in L^1 (\R^n, \Rn)$ and, for all $\Phi \in C^{\infty}_c (\Rn,\Rn)$,
\begin{equation*}
 \Div_{\rho_1} \Phi = \Div_{\rho_2} \Phi + F*\Phi.
\end{equation*}
By Young's inequality, the operator $u \mapsto F * u$ is bounded on $L^p (\Rn)$. Hence, for $u \in H^{\rho_2,p}_0(\Omega)$ and $\Phi \in C^{\infty}_c (\Rn,\Rn)$ it holds that
\begin{equation*}
\int_{\R^n} (D_{\rho_2} u + F*u)\cdot \Phi\,dx = - \int_{\R^n} u \,(\Div_{\rho_2} \Phi + F*\Phi)\,dx
= -\int_{\R^n} u \Div_{\rho_1}\Phi\,dx,
\end{equation*}
where the first equality uses Fubini's theorem and the definition of the weak nonlocal gradient $D_{\rho_2}$. Therefore, $u \in H^{\rho_1,p}_0 (\O)$,
\begin{equation}\label{eq:Gr1uGr2u}
 D_{\rho_1} u = D_{\rho_2} u + F*u
\end{equation}
and
\[
 \norm{D_{\rho_1} u}_{L^p (\R^n, \Rn)} \leq \norm{D_{\rho_2} u}_{L^p (\R^n, \Rn)} + \norm{F}_{L^1 (\R^n, \Rn)} \norm{u}_{L^p (\Rn)} .
\]
The reverse inclusion and inequality are proved analogously.

 \smallskip

Part \ref{item:carry2}.
We first prove that if $u \in C^{\infty}_c (\Rn)$ satisfies $\Gcal_{\rho_2} u = 0$, then $u=0$.
By Lemma \ref{le:lamda},
\[
 0 = \widehat{\Gcal_{\rho_2} u}(\xi) = \lambda_{\rho_2}(\xi) \widehat{u}(\xi) \quad \text{for all $\xi \in \R^n$}.
\]
If $u$ were not identically zero, then $\widehat{u}$ would be a non-zero analytic function, and hence, non-zero in a set of full measure. As such, we deduce that $\lambda_{\rho_2}=0$ a.e.
A further application of Lemma \ref{le:lamda} to any $\phi \in C_c^{\infty}(\R^n)$ implies that $\widehat{\Gcal_{\rho_2}\phi} = 0$ a.e., hence $\Gcal_{\rho_2}\phi = 0$ a.e., and, in fact, everywhere thanks to Proposition \ref{pr:Qhat} \ref{item:Qhat1}.
On the other hand, choosing a $\phi$ such that $\phi(y)=-\phi(-y)$ for all $y \in \Rn$, $\phi \geq 0$ in $\{ y_1 > 0 \}$ and $\phi$ is not identically zero on $B_{\e} \cap \{ y_1 > 0 \}$ yields
\[
\Gcal_{\rho_2}\phi(0) =   \int_{\{ y_1 > 0 \}} \frac{2\phi(y)}{\abs{y}}\frac{y}{\abs{y}} \rho_2 (y)\,dy >0,
\]
cf.~\ref{itm:h0}. This contradiction concludes that $u$ must be zero, which proves the claim. 

Now we show that if $u \in H^{\rho_2, p}_0 (\Omega)$ satisfies $D_{\rho_2} u = 0$, then $u=0$.
Let $\phi \in C^{\infty}_c (\Rn)$.
By Lemma \ref{le:convolutiongradient} and the fact that $\O$ is bounded, $\phi * u \in C^{\infty}_c (\Rn)$ and
\[
 \Gcal_{\rho_2} (\varphi * u) = \varphi * D_{\rho_2} u = 0 .
\]
By the claim above, $\phi * u = 0$.
As this is true for all $\phi \in C^{\infty}_c (\Rn)$, we conclude, by taking $\phi$ to be a family of mollifiers, that $u = 0$.

Now, to prove the statement, we argue by contradiction. Suppose $(u_j)_j \subset H^{\rho_2,p}_0 (\O)$ is a sequence satisfying
\[
 1 = \norm{u_j}_{L^p(\Omega)} > j \norm{D_{\rho_2}u_j}_{L^p(\R^n,\R^n)} \quad \text{for all $j \in \N$}.
\]
As $H^{\rho_2,p}_0 (\O)$ is reflexive, there exists $u \in H^{\rho_2,p}_0 (\O)$ such that, up to subsequence, $u_j \weakto u$ in $H^{\rho_2,p}_0 (\O)$.
As $D_{\rho_2}u_j \to 0$ in $L^p(\R^n,\R^n)$, we obtain that in fact $D_{\rho_2}u=0$, which implies $u = 0$ thanks to the result of the previous paragraph.

Since for all large $R>0$, it holds that $\supp u_j \subset B_R$ for all $j \in \N$, we find that
\[
\norm{F*u_j}_{L^p(B_{2R}^c,\Rn)} = \norm{(\mathbbm{1}_{B_R^c}F)*u_j}_{L^p(B_{2R}^c,\Rn)} \leq \norm{F}_{L^1(B_R^c,\Rn)} \to 0,
\]
as $R \to \infty$. Together with the compactness of convolution operators on bounded sets due to the Fr\'{e}chet-Kolmogorov criterion (cf.~\cite[Cor.\ 4.28]{Brezis}), we infer that $F*u_j \to 0$ in $L^p(\Rn)$, and hence, by \eqref{eq:Gr1uGr2u}
\[
 D_{\rho_1}u_j = D_{\rho_2}u_j + F*u_j \to 0 \quad \text{in $L^p(\R^n,\R^n)$}.
\]
In view of \eqref{eq:poincareassumption}, we deduce that $u_j \to 0$ in $L^p(\Omega)$ which contradicts $\norm{u_j}_{L^p(\Omega)}=1$.
\end{proof}

We note that in part~\ref{item:carry1}, we can take $\Omega=\Rn$, which yields the correspondence $H^{\rho_1,p}(\Rn)=H^{\rho_2,p}(\Rn)$ with equivalent norms. Additionally, having in mind \eqref{eq:H0corbarrho}, we can see that the condition $(\rho_1-\rho_2)/\abs{\cdot} \in L^1 (\Rn)$ can be equivalently written as
\[
 \int_0^r \left( \overline{\rho_1} (t) - \overline{\rho_2} (t) \right) t^{n-2} \, dt < \infty , \quad \text{for some } r > 0 .
\]

\begin{example}\label{ex:carryover}
We present two applications of Proposition~\ref{prop:carryover}.
\begin{enumerate}[label=(\alph*)]
\item \label{itm:carryover}
Let $\rho$ satisfy \ref{itm:h0} and let $\chi \in L^{\infty} (\Rn)$ be radial with $\chi \geq 0$ and $\chi|_{B_r} = 1$ for some $r>0$.
Then $\chi \rho$ satisfies \ref{itm:h0} and $H^{\rho,p}_0(\Omega)= H^{\chi \rho,p}_0(\Omega)$ for all $p \in [1, \infty]$ and any open set $\O \subset \Rn$.
The assumption $\chi|_{B_r} = 1$ can be weakened to $\inf_{B_{r}} \chi > 0$ and
\[
 \int_{B_r} \frac{\chi (x) - \chi (0)}{|x|} \rho (x) \, dx < \infty .
\]
Moreover, if $p \in (1, \infty)$ and $\O$ is bounded, any Poincar\'e inequality for $D_{\rho}$ implies an analogous one for $D_{\chi \rho}$.
We conclude that we can associate to every kernel satisfying \ref{itm:h0} a kernel satisfying \ref{itm:h0} with compact support and giving rise to the same space.
In the development of the theory we will often require that the kernel has compact support.
Thanks to this observation, this can be assumed without loss of generality. \smallskip

\item
Let $s \in (0,1)$.
Let $\chi \in L^{\infty} (\Rn)$ be radial with $\chi \geq 0$ and $(\chi(0)-\chi)/\abs{\cdot}^{n+s} \in L^1(\R^n)$; the latter condition happens, for example, if $\chi$ is $\gamma$-H\"{o}lder continuous at $0$ with $\gamma >s$.
Then the kernel
\[
\rho(x) = \frac{\chi(x)}{\abs{x}^{n+s-1}}
\]
can be compared with the kernel \eqref{eq:Rieszkernel} of the Riesz $s$-fractional gradient.
Thus, $H^{s, p}_0 (\O) = H^{\rho, p}_0 (\O)$ for any open $\O \subset \Rn$ and $p \in [1, \infty]$.
If, in addition, $\O$ is bounded and $p \in (1, \infty)$, we have that $\norm{u}_{L^p(\Omega)} \leq C \norm{D_{\rho} u}_{L^p(\R^n,\R^n)}$ for all $u \in H^{\rho, p}_0 (\O)$, as a consequence of the corresponding inequality for the Bessel-potential space $H^{s,p}$ (see \cite[Th.\ 1.8]{ShS2015}).
This constitutes an alternative proof, as well as a generalization of the Poincar\'e inequality for truncated fractional gradients of \cite[Th.\ 6.2]{BeCuMo22}.

\end{enumerate}
\end{example}

\section{Poincar\'{e} inequalities and compact embeddings}\label{sec:poinc}
In this section, we derive conditions on the kernel $\rho$ such that the $\rho$-nonlocal gradient satisfies a Poincar\'{e} inequality, and such that the spaces $H^{\rho,p}_0(\O)$ are compactly embedded into $L^p$. The argument is based on inverting the nonlocal gradient via \eqref{eq:ftocfourier}, and showing with Fourier techniques that this is a bounded or compact operation. The results in the case $p=2$ are tackled first and rely on Parseval's identity. Subsequently, the general case $p \in (1,\infty)$ is considered, which requires an additional assumption in order to apply the Mihlin-H\"{o}rmander multiplier theorem.

\subsection{Positivity of $\widehat{Q}_\rho$}\label{se:positivity}

W show in this section that $\widehat{Q}_\rho > 0$, as a first step to make sense of the expression \eqref{eq:ftocfourier}. We also relate the decay of $\widehat{Q}_\rho$ at infinity with the behavior of $\rho$ around the origin.
For this, we need the following assumption:
\begin{enumerate}[label = (H\arabic*)]
\item \label{itm:h1} The function $f_\rho:(0,\infty)\to \R, \ t \mapsto t^{n-2}\overline{\rho}(t)$ is decreasing, and there is a $0<\mu<1$ such that $\mu f_\rho(t/2) \geq  f_\rho(t)$ for $t \in (0,\epsilon)$.
\end{enumerate}
\begin{remark}\label{rem:h0h1relation}
We note that the second part (the doubling property) of \ref{itm:h1} is satisfied if there is a $\nu >0$ such that $t \mapsto t^{\nu}f_\rho(t)$ is decreasing on $(0,\epsilon)$. The constant $\mu$ is then given by $2^{-\nu}$.

Additionally, if $f_\rho$ is differentiable, then a simple calculation with the product rule shows that $t \mapsto t^{\nu}f_\rho(t)$ is decreasing on $(0,\epsilon)$ if and only if
\[
-\frac{1}{\nu}\frac{d}{dt}f_\rho(t) \geq \frac{f_\rho(t)}{t} \quad \text{for all $t \in (0,\epsilon)$}.
\]
The latter condition will appear again in \ref{itm:h2}, and hence, implies the doubling property of \ref{itm:h1}.
\end{remark}

\begin{example}\label{ex:H0H1}
Classes of kernels $\rho$ satisfying \ref{itm:h0}--\ref{itm:h1} are:
\begin{enumerate}[label=(\alph*)]
\item $\rho$ of Example \ref{ex:H0} \ref{item:Riesz}.

\item\label{item:alphabeta} Given $n-2 < \alpha < n$ and $\beta > n-1$,
\[
 \rho (x) = \frac{\mathbbm{1}_{B_1}(x)}{\abs{x}^{\alpha}} +  \frac{\mathbbm{1}_{B_1^c}(x)}{\abs{x}^{\beta}} .
\]

\item Given $n-2 < \alpha < n$,
\[
\rho(x) = \mathbbm{1}_{B_1}(x) \frac{- \log \abs{x}}{\abs{x}^{\alpha}},
\]

\item Given $n-2 < \alpha < n$ and $r>0$ sufficiently small,
\[
\rho(x) = \frac{\mathbbm{1}_{B_{r}(x)}}{\abs{x}^{\alpha} (- \log \abs{x})},
\]

\item If $\rho_1, \rho_2$ satisfy \ref{itm:h0}--\ref{itm:h1} and $\alpha_1, \alpha_2>0$ then $\alpha_1 \rho_1 + \alpha_2 \rho_2$ satisfies \ref{itm:h0}--\ref{itm:h1}.

\end{enumerate}
\end{example}

We may now state the following result, whose proof takes inspiration from \cite[Lemma~5.3]{BeCuMo22}.

\begin{lemma}\label{le:Qhatpositive}
Suppose that $\rho$ satisfies \ref{itm:h0}, \ref{itm:h1} and $\rho \in L^1 (\Rn)$.
Then, $\widehat{Q}_\rho$ is positive and there is a $C>0$ such that
\[
\widehat{Q}_\rho(\xi) \geq C \frac{\overline{\rho}(1/\abs{\xi})}{\abs{\xi}^{n}} \quad \text{and} \quad \widehat{Q}_\rho(\xi) \geq \frac{C}{\abs{\xi}^2} \qquad \text{for all } \xi \in B_{2/\epsilon}^c .
\]
\end{lemma}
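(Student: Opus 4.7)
The plan is to lift the identity from Proposition~\ref{pr:Qhat}\ref{item:Qhat2},
\[
\widehat{Q}_\rho(\xi) = \frac{1}{2\pi|\xi|}\int_{\Rn}\frac{\rho(x)x_1}{|x|^2}\sin(2\pi|\xi|x_1)\,dx,
\]
into a form where the monotonicity and the doubling coming from \ref{itm:h1} can be exploited through an Abel--Dirichlet type pairing. Passing to polar coordinates $x = t\omega$, using the $\omega\mapsto -\omega$ symmetry to restrict to $\Sn_+$, and rescaling $u = 2\pi|\xi|t\omega_1$ in the radial integral, I obtain
\[
\widehat{Q}_\rho(\xi) = \frac{1}{2\pi^2|\xi|^2}\int_{\Sn_+}\int_0^\infty f_\rho\!\left(\frac{u}{2\pi|\xi|\omega_1}\right)\sin u\,du\,d\mathcal{H}^{n-1}(\omega),
\]
with the interchange of integrals justified by the absolute integrability in \eqref{eq:H0cor}.

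For positivity at $\xi\neq 0$, write $A = 2\pi|\xi|\omega_1$ and $h(u)=f_\rho(u/A)$; by \ref{itm:h1} the map $h:(0,\infty)\to(0,\infty)$ is decreasing, and the tail integrability $\int_1^\infty f_\rho(t)\,dt<\infty$ coming from \eqref{eq:H0corbarrho} forces $h(u)\to 0$ as $u\to\infty$. Grouping consecutive arches,
\[
\int_0^\infty h(u)\sin u\,du = \sum_{k=0}^\infty \int_0^\pi \sin u\,\bigl(h(u+2k\pi)-h(u+(2k+1)\pi)\bigr)\,du \geq 0,
\]
and vanishing of the sum would force $h$ to be $\pi$-periodic, hence identically zero, contradicting $\overline{\rho}\not\equiv 0$. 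The inner integral is therefore strictly positive for every $\omega\in\Sn_+$, so $\widehat{Q}_\rho(\xi)>0$; combined with $\widehat{Q}_\rho(0)=\int_{\Rn}Q_\rho>0$ from Lemma~\ref{le:Q}\ref{item:Q3}, this establishes positivity on all of $\Rn$.

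For the quantitative bound on $B_{2/\epsilon}^c$, I would restrict the spherical integration to $\Sn_+\cap\{\omega_1\geq 1/2\}$, which has positive $\mathcal{H}^{n-1}$-measure. The hypothesis $|\xi|>2/\epsilon$ gives $A\geq \pi|\xi|>2\pi/\epsilon$, so $\pi/A<\epsilon/2<\epsilon$ lies in the doubling range of \ref{itm:h1}. Applying $f_\rho(\pi/A)\leq \mu f_\rho(\pi/(2A))$ together with monotonicity of $h$ yields $h(u+\pi)\leq f_\rho(\pi/A)\leq \mu h(u)$ for $u\in(0,\pi/2)$, and therefore
\[
\int_0^\infty h(u)\sin u\,du \geq (1-\mu)\int_0^{\pi/2} h(u)\sin u\,du \geq (1-\mu)\,f_\rho\!\left(\frac{\pi}{2A}\right).
\]
Since $\pi/(2A)=1/(4|\xi|\omega_1)\leq 1/(2|\xi|)<\epsilon/2$, a further application of doubling at $t=1/|\xi|$ gives $f_\rho(1/(2|\xi|))\geq \mu^{-1}f_\rho(1/|\xi|)$, and integrating over the spherical cap produces
\[
\widehat{Q}_\rho(\xi)\geq C\,\frac{f_\rho(1/|\xi|)}{|\xi|^2}=C\,\frac{\overline{\rho}(1/|\xi|)}{|\xi|^n}.
\]
The second bound is then immediate: for $|\xi|>2/\epsilon$ we have $1/|\xi|<\epsilon/2$, so by monotonicity $f_\rho(1/|\xi|)\geq f_\rho(\epsilon/2)=(\epsilon/2)^{n-2}\overline{\rho}(\epsilon/2)$, which is strictly positive because $\inf_{B_\epsilon}\rho>0$ in \ref{itm:h0}; this delivers $\widehat{Q}_\rho(\xi)\geq C/|\xi|^2$.

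The main obstacle is converting the qualitative monotonicity of $f_\rho$ into a quantitative lower bound on the oscillatory integral: the Abel--Dirichlet pairing by itself only delivers non-negativity, and it is precisely the multiplicative gap $\mu<1$ supplied by the doubling condition in \ref{itm:h1} that upgrades one half-arch of the pairing into the quantitative estimate $(1-\mu)f_\rho(\pi/(2A))$, yielding the correct $\overline{\rho}(1/|\xi|)/|\xi|^n$ scaling needed as input for the Fourier-multiplier arguments later in this section.
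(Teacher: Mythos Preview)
Your approach is essentially the paper's: rewrite $\widehat{Q}_\rho$ via polar coordinates, pair consecutive half-periods to get non-negativity of the inner integral from the monotonicity of $f_\rho$, and use the doubling condition on the first arch to extract the quantitative lower bound; the estimates for $\xi\in B_{2/\epsilon}^c$ are correct.

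There is one slip in the strict-positivity step. Vanishing of the sum only forces $h(u+2k\pi)=h(u+(2k+1)\pi)$ for $u\in(0,\pi)$ and each $k\ge 0$, which (together with monotonicity) makes $h$ constant on each interval $(2k\pi,(2k+2)\pi)$ separately; this is \emph{not} $\pi$-periodicity, since the constants on consecutive intervals may drop, and indeed any decreasing step function with jumps only at the points $2k\pi$ integrates to zero against $\sin u$. The actual contradiction---which is what the paper uses---is that constancy of $h$ on $(0,2\pi)$ forces $f_\rho$ to be constant on a neighbourhood of the origin, and this is ruled out by the doubling bound $\mu f_\rho(t/2)\ge f_\rho(t)$ in \ref{itm:h1} (with $\mu<1$) combined with $\inf_{B_\epsilon}\rho>0$ from \ref{itm:h0}.
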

\begin{proof}
Note that since $\widehat{Q}_\rho$ is continuous (by Lemma \ref{le:Q} \ref{item:Q3}) and $\widehat{Q}_\rho(0)=\norm{Q_\rho}_{L^1(\R^n)}>0$, we have that $\widehat{Q}_\rho$ is positive around the origin.
Next, for $\xi \not = 0$ we obtain from \eqref{eq:Qhatrho} and the coarea formula that
\begin{equation}\label{eq:hatQcoarea}
\begin{split}
\widehat{Q}_\rho(\xi) &= 
 \frac{1}{\pi\abs{\xi}} \int_{\Sn_+} z_1 \int_0^\infty r^{n-2}\overline{\rho}(r)\sin(2 \pi \abs{\xi} r z_1)\,dr\,d \mc{H}^{n-1} (z) \\
&=\frac{1}{\pi\abs{\xi}} \int_{\Sn_+} z_1 \int_0^\infty f_\rho(r) \sin(2 \pi \abs{\xi} r z_1)\,dr\,d \mc{H}^{n-1} (z) .
\end{split}
\end{equation}
Let $\theta>0$, which will play the role of $\abs{\xi} z_1$.
We have
\begin{equation}\label{eq:oscillatory1}
 \int_0^\infty f_\rho(r) \sin(2 \pi \theta r)\,dr = \sum_{k=0}^{\infty} \int_{\frac{k}{\theta}}^{\frac{k + 1}{\theta}} f_\rho(r) \sin(2 \pi \theta r)\,dr
\end{equation}
and for each $k \in \N$,
\begin{equation}\label{eq:oscillatory2}
 \int_{\frac{k}{\theta}}^{\frac{k + 1}{\theta}} f_\rho(r) \sin(2 \pi \theta r)\,dr =  \int_{\frac{k}{\theta}}^{\frac{k + \frac{1}{2}}{\theta}} \left( f_\rho(r) - f_\rho \left( r + \frac{1}{2\theta} \right) \right) \sin(2 \pi \theta r)\,dr \geq 0 ,
\end{equation}
since $f_{\rho}$ is decreasing.
Moreover,
\[
 \int_{0}^{\frac{1}{2\theta}} \left( f_\rho(r) - f_\rho \left( r + \frac{1}{2\theta} \right) \right) \sin(2 \pi \theta r)\,dr > 0,
\]
since otherwise $f_{\rho}$ would be constant near zero, contradicting the doubling property in \ref{itm:h1}.
This shows that $\widehat{Q}_\rho(\xi) > 0$.

Now, for $\theta > 1/\epsilon$ and $0 < r < \frac{1}{2\theta}$, we have
\[
  f_{\rho}(r)-f_{\rho}\left(r+\frac{1}{2\theta} \right) \geq f_{\rho}(r)-f_{\rho}(2r) \geq \left(1-\mu \right) f_\rho(r) \geq \left(1-\mu\right)f_\rho\left(\frac{1}{2\theta}\right) ,
\]
where we have used that $f_\rho$ is decreasing, as well as the doubling condition in \ref{itm:h1}.
Therefore,
\begin{equation*}
 \int_0^{\frac{1}{2\theta}} \left(f_{\rho}(r)-f_{\rho}\left(r+\frac{1}{2\theta}\right)\right) \sin(2 \pi \theta r)\,dr \geq \left(1-\mu\right)f_\rho\left(\frac{1}{2\theta}\right) \int_0^{\frac{1}{2\theta}} \sin(2 \pi \theta r)\,dr
= \frac{1 - \mu}{\pi \theta} f_\rho\left(\frac{1}{2\theta}\right).
\end{equation*}
The above inequality together with \eqref{eq:oscillatory2} and \eqref{eq:hatQcoarea} show that for all $\abs{\xi} > 2/\epsilon$,
\begin{align*}
\begin{split}
\widehat{Q}_{\rho}(\xi) & 
\geq \frac{1}{2\pi\abs{\xi}} \int_{\Sn \cap \{z_1 > 1/2\}} \int_0^\infty f_\rho(r) \sin(2 \pi \abs{\xi} r z_1)\,dr\,d \mc{H}^{n-1} (z) \\
&\geq \frac{1 - \mu}{2\pi^2\abs{\xi}^2} \int_{\Sn \cap \{z_1 > 1/2\}} \frac{1}{z_1}f_\rho\left(\frac{1}{2\abs{\xi}z_1}\right)\,d \mc{H}^{n-1} (z) 
\geq \frac{C}{|\xi|^2} f_\rho\left(\frac{1}{\abs{\xi}}\right) ,
\end{split}
\end{align*}
where in the last inequality we have used that $f_{\rho}$ is decreasing.
The constant $C$ depends on $\mu$ and $n$.
The proof is concluded thanks to the definition of $f_{\rho}$, as well as the fact that $f_\rho\left(\frac{1}{\abs{\xi}}\right) \geq f_\rho\left(\frac{\epsilon}{2}\right)$.
\end{proof}

The following proposition constitutes a further step to prove formula \eqref{eq:ftocfourier}.


\begin{proposition}\label{pr:Wrho}
Suppose that $\rho$ satisfies \ref{itm:h0}, \ref{itm:h1} and $\rho \in L^1 (\Rn)$.
Then $W_\rho$ given by
\begin{equation}\label{eq:vrhofourier}
\inner{W_\rho,\eta} = \lim_{r \downarrow 0} \int_{B_r^c} \frac{-i\xi}{2\pi\abs{\xi}^2\widehat{Q}_\rho(\xi)}\eta(\xi)\,d\xi , \qquad \text{for} \ \eta \in \Scal(\R^n),
\end{equation}
defines a tempered distribution. Moreover, if $\rho$ has compact support, it holds that
\begin{equation}\label{eq:ftoctempered}
W_\rho \cdot \widehat{\Gcal_\rho \phi} = \widehat{\phi} \quad \text{for all $\phi \in C_c^{\infty}(\Rn)$}.
\end{equation}
\end{proposition}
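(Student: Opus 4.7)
The plan is to establish Part 1 by splitting the integration against a Schwartz function $\eta$ into three regimes, and Part 2 by a direct algebraic computation using the multiplier formula from Proposition \ref{pr:Qhat}\,\ref{item:Qhat2}.

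For Part 1, note that $\rho \in L^1(\R^n)$ together with Lemma \ref{le:Q}\,\ref{item:Q3} gives $Q_\rho \in L^1(\R^n)$, so $\widehat{Q}_\rho$ is continuous, even (since $Q_\rho$ is radial), and satisfies $\widehat{Q}_\rho(0) = \|Q_\rho\|_{L^1(\R^n)} > 0$. Choose $r_0 \in (0, 1)$ such that $\widehat{Q}_\rho \geq \widehat{Q}_\rho(0)/2$ on $B_{r_0}$. Near the origin, the integrand in \eqref{eq:vrhofourier} has a non-integrable $|\xi|^{-1}$ singularity, but the oddness of $\xi \mapsto \xi/|\xi|^2$ combined with the evenness of $\widehat{Q}_\rho$ ensures that $\int_{B_{r_0} \setminus B_r} \frac{-i\xi}{|\xi|^2 \widehat{Q}_\rho(\xi)}\,d\xi = 0$. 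Subtracting $\eta(0)$ times this vanishing integral and using the bound $|\eta(\xi) - \eta(0)| \leq \|\nabla \eta\|_{L^\infty} |\xi|$ yields an integrand dominated by a constant on $B_{r_0}$, so dominated convergence delivers the $r \downarrow 0$ limit. On the intermediate annulus $B_{2/\e} \setminus B_{r_0}$, continuity and positivity of $\widehat{Q}_\rho$ make the integrand uniformly bounded. On $\{|\xi| > 2/\e\}$, Lemma \ref{le:Qhatpositive} provides $\widehat{Q}_\rho(\xi) \geq C/|\xi|^2$, so the integrand is bounded by $C|\xi| |\eta(\xi)|$, which is integrable by the Schwartz decay of $\eta$. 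Each of the three bounds is controlled by a finite sum of Schwartz seminorms of $\eta$, hence $W_\rho$ defines a tempered distribution.

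For Part 2, compact support of $\rho$ combined with Lemma \ref{le:Q}\,\ref{item:Q3} gives compact support of $Q_\rho$, so $\widehat{Q}_\rho$ is smooth with all derivatives bounded, hence a Schwartz multiplier. For $\phi \in C_c^\infty(\R^n)$, Proposition \ref{pr:Qhat}\,\ref{item:Qhat2} states $\widehat{\Gcal_\rho \phi}(\xi) = 2\pi i \xi \widehat{Q}_\rho(\xi) \widehat{\phi}(\xi)$, which lies in $\Scal(\R^n, \R^n)$. The product $W_\rho \cdot \widehat{\Gcal_\rho \phi}$ is then a well-defined tempered distribution acting on $\eta \in \Scal(\R^n)$ by $\langle W_\rho,\, \widehat{\Gcal_\rho \phi}\, \eta \rangle$. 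Substituting the multiplier formula into \eqref{eq:vrhofourier} yields pointwise cancellation of the factors $|\xi|^2$ and $\widehat{Q}_\rho(\xi)$, reducing the integrand to $\widehat{\phi}(\xi)\eta(\xi)$, which is absolutely integrable on $\R^n$; the $r \downarrow 0$ limit is therefore $\int_{\R^n} \widehat{\phi}\,\eta\,d\xi = \langle \widehat{\phi}, \eta \rangle$, proving \eqref{eq:ftoctempered}.

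The main obstacle is the principal-value convergence near the origin: the resolution relies crucially on the parity argument combining the oddness of $\xi/|\xi|^2$ with the evenness of $\widehat{Q}_\rho$ (guaranteed by radiality of $Q_\rho$) together with the Lipschitz regularity of Schwartz functions at a point. The far-field bound is an immediate consequence of the lower bound on $\widehat{Q}_\rho$ furnished by Lemma \ref{le:Qhatpositive}, while the algebraic cancellation that proves Part 2 is essentially tautological once $\widehat{\Gcal_\rho \phi}$ is known to be a Schwartz function.
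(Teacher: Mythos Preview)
Your proof is correct and follows essentially the same approach as the paper: the parity argument near the origin (odd $\xi/|\xi|^2$, even $\widehat{Q}_\rho$, subtract $\eta(0)$), the positivity/continuity of $\widehat{Q}_\rho$ on the intermediate region, and the lower bound $\widehat{Q}_\rho(\xi)\geq C/|\xi|^2$ from Lemma~\ref{le:Qhatpositive} at infinity are exactly the ingredients the paper uses. The only cosmetic difference is that the paper justifies $\widehat{\Gcal_\rho\phi}\in\Scal(\R^n,\C^n)$ by observing from Proposition~\ref{pr:Qhat}\,\ref{item:Qhat1} that $\Gcal_\rho\phi\in C_c^\infty(\R^n,\R^n)$ directly, whereas you argue via the Schwartz-multiplier property of $\xi\,\widehat{Q}_\rho(\xi)$; both routes are valid.
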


\begin{proof}
The integrand of \eqref{eq:vrhofourier} is well defined since $\widehat{Q}_\rho$ is positive (Lemma \ref{le:Qhatpositive}). Let us see that the integral is absolutely convergent for each $r > 0$, and for this we can assume that $r < 2/ \epsilon$.
For $|\xi| \geq 2/\epsilon$ we have that
\[
 \left| \frac{-i\xi}{2\pi\abs{\xi}^2\widehat{Q}_\rho(\xi)}\eta(\xi) \right| \leq \frac{\abs{\eta(\xi)}}{2\pi\abs{\xi}\widehat{Q}_\rho(\xi)} \leq \frac{\abs{\eta(\xi)} \abs{\xi}}{2\pi C} ,
\]
thanks to Lemma~\ref{le:Qhatpositive}, and the right-hand side is integrable in $B_{2 / \epsilon}^c$ since $\eta \in \Scal(\R^n)$.
For $r < |\xi| < 2/\epsilon$ we have that
\[
 \left| \frac{-i\xi}{2\pi\abs{\xi}^2\widehat{Q}_\rho(\xi)}\eta(\xi) \right| \leq \frac{\norm{\eta}_{\infty}}{2\pi\abs{\xi}\widehat{Q}_\rho(\xi)} ,
\]
and the right-hand side is integrable in $B_{2 / \epsilon} \setminus B_r $ since $\widehat{Q}_\rho$ is positive (Lemma \ref{le:Qhatpositive}) and continuous (Lemma \ref{le:Q} \ref{item:Q3}).

Now, by symmetry (as $\widehat{Q}_\rho$ is radial; cf.\ \eqref{eq:Qhatrho}),
\[
 \int_{B_{2/\epsilon} \setminus B_r^c} \frac{-i\xi}{2\pi\abs{\xi}^2\widehat{Q}_\rho(\xi)}\eta(\xi)\,d\xi = \int_{B_{2/\epsilon} \setminus B_r^c} \frac{-i\xi}{2\pi\abs{\xi}^2\widehat{Q}_\rho(\xi)}\parent{\eta(\xi) - \eta (0)} d\xi ,
\]
with
\[
 \left| \frac{-i\xi}{2\pi\abs{\xi}^2\widehat{Q}_\rho(\xi)}\parent{\eta(\xi) - \eta (0)} \right| \leq \frac{\norm{\nabla \eta}_{\infty}}{2 \pi \widehat{Q}_\rho(\xi)} ,
\]
and the right-hand side is integrable in $B_{2/\epsilon}$ since $\widehat{Q}_\rho$ is positive and continuous.
Therefore, the limit of \eqref{eq:vrhofourier} exists and defines a tempered distribution.

For the final part, we note that if $\rho$ has compact support, then $\Gcal_\rho \phi \in C_c^{\infty}(\Rn,\Rn)$ for all $\phi \in C_c^{\infty}(\Rn)$ by Proposition~\ref{pr:Qhat} \ref{item:Qhat1} and hence, $\widehat{\Gcal_\rho \phi} \in \Scal(\Rn,\C^n)$. With this and Proposition \ref{pr:Qhat} \ref{item:Qhat2}, we obtain that for all $\eta \in \Scal(\R^n)$,
\begin{equation*}
\dprlr{W_\rho\cdot \widehat{\Gcal_\rho \phi},\eta} = \dprlr{W_\rho,\widehat{\Gcal_\rho \phi} \, \eta}  
= \lim_{r \downarrow 0} \int_{B_r^c} \frac{-i\xi}{2\pi\abs{\xi}^2\widehat{Q}_\rho(\xi)}\cdot2\pi i \xi \widehat{Q}_\rho(\xi)\widehat{\phi}(\xi)\eta(\xi)\,d\xi 
 = \int_{\R^n} \widehat{\phi}(\xi) \eta(\xi)\,d\xi.
\end{equation*}
This proves that $W_\rho \cdot \widehat{\Gcal_\rho \phi} = \widehat{\phi}$, as desired.
\end{proof}

Note that, when $n>1$, the distribution $W_\rho$ actually agrees with the locally integrable function
\[
 W_\rho (\xi) = \frac{-i\xi}{2\pi\abs{\xi}^2\widehat{Q}_\rho(\xi)} .
\]

\subsection{Poincar\'{e} inequality and Compactness in $L^2$}\label{se:PoincareL2}

The bounds in Lemma~\ref{le:Qhatpositive} allow us to swiftly prove a Poincar\'{e} inequality and compactness result in the $L^2$-setting, by prescribing that $\rho$ is of compact support and satisfies certain bounds.

Recall from Section \ref{se:first} that if $\supp \rho = \overline{B_\delta}$ for some $\d>0$, given an open set $\O \subset \Rn$ and $\phi \in C_c^{\infty}(\Omega)$, the function $\Gcal_\rho \phi$ is supported in $\Omega_\delta=\Omega+B_\d$.

\begin{theorem}\label{th:l2bound}
Let $\Omega \subset \R^n$ be open and bounded and suppose that $\rho$ satisfies \ref{itm:h0}, \ref{itm:h1} and has compact support. Then, the following two statements hold:
\begin{enumerate}[label = (\roman*)]
\item\label{item:Poincare1} If $\liminf_{t \downarrow0} t^{n-1}\overline{\rho}(t)>0$, then there is a $C=C(\Omega,n,\rho)>0$ such that
\[
\norm{u}_{L^2(\Omega)} \leq C \norm{D_\rho u}_{L^2(\R^n,\R^n)} \quad \text{for all $u \in H^\rho_0(\Omega)$}.
\]
\item\label{item:Poincare2} If $\lim_{t \downarrow 0}t^{n-1}\overline{\rho}(t)=\infty$, then $H^\rho_0(\Omega)$ is compactly embedded into $L^2(\R^n)$.
\end{enumerate}
\end{theorem}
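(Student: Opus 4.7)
My strategy is to exploit the translation identity $\mathcal{G}_\rho u = \nabla(Q_\rho * u)$ from Proposition \ref{pr:Qhat} \ref{item:Qhat1} to convert the problem into a statement about the classical gradient of the auxiliary function $v := Q_\rho * u$, and then invert the convolution $v \mapsto u$ in Fourier space using the lower bounds on $\widehat{Q}_\rho$ provided by Lemma \ref{le:Qhatpositive}. Since $\rho$ is compactly supported in $\overline{B_\delta}$ for some $\delta > 0$, Lemma \ref{le:Q} \ref{item:Q3} yields $Q_\rho \in L^1(\R^n)$ with $\supp Q_\rho \subset \overline{B_\delta}$. For $u \in H^\rho_0(\Omega)$, density (Theorem \ref{th:density} \ref{item:density1}) provides $u_k \in C_c^\infty(\R^n)$ with $u_k \to u$ in $H^\rho(\R^n)$; Young's inequality gives $Q_\rho * u_k \to Q_\rho * u$ in $L^2(\R^n)$, while $\nabla(Q_\rho * u_k) = \mathcal{G}_\rho u_k \to D_\rho u$ in $L^2(\R^n,\R^n)$. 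Consequently $v := Q_\rho * u$ lies in $W^{1,2}(\R^n)$ with $\nabla v = D_\rho u$ and support in the fixed bounded set $K := \overline{\Omega} + \overline{B_\delta}$, so the classical Poincar\'{e} inequality on $W^{1,2}_0$ of a bounded open neighbourhood of $K$ yields $\|v\|_{L^2(\R^n)} \leq C_{\Omega,\delta}\,\|D_\rho u\|_{L^2(\R^n,\R^n)}$.

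For part (i), set $R = 2/\epsilon$ with $\epsilon$ from \ref{itm:h0}. The hypothesis $\liminf_{t \downarrow 0} t^{n-1}\overline{\rho}(t) > 0$ combined with Lemma \ref{le:Qhatpositive} yields $\widehat{Q}_\rho(\xi) \geq c_1 |\xi|^{-1}$ for $|\xi| > R$, while the positivity (Lemma \ref{le:Qhatpositive}) and continuity of $\widehat{Q}_\rho$ guarantee $\widehat{Q}_\rho \geq c_2 > 0$ on the compact set $\overline{B_R}$. Applying Parseval to $\widehat{u} = \widehat{v}/\widehat{Q}_\rho$ and splitting the frequency domain,
\[
\|u\|_{L^2(\R^n)}^2 = \int_{B_R^c} \frac{|\widehat{v}(\xi)|^2}{|\widehat{Q}_\rho(\xi)|^2}\,d\xi + \int_{B_R} \frac{|\widehat{v}(\xi)|^2}{|\widehat{Q}_\rho(\xi)|^2}\,d\xi \leq \frac{1}{4\pi^2 c_1^2}\|\nabla v\|_{L^2(\R^n,\R^n)}^2 + \frac{1}{c_2^2}\|v\|_{L^2(\R^n)}^2,
\]
and both right-hand terms are bounded by a multiple of $\|D_\rho u\|_{L^2(\R^n,\R^n)}^2$ in view of $\nabla v = D_\rho u$ and the Poincar\'{e} estimate on $v$.

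For part (ii), take $(u_j)_j$ bounded in $H^\rho_0(\Omega)$ and, after passing to a subsequence, $u_j \weakto u$ in $L^2(\R^n)$. Then $v_j := Q_\rho * u_j$ is bounded in $W^{1,2}(\R^n)$ with supports in the fixed bounded set $K$, so the Rellich--Kondrachov theorem produces a further subsequence with $v_j \to v := Q_\rho * u$ in $L^2(\R^n)$. Under the stronger hypothesis $\lim_{t \downarrow 0} t^{n-1}\overline{\rho}(t) = \infty$, Lemma \ref{le:Qhatpositive} upgrades the bound to $|\xi|\,\widehat{Q}_\rho(\xi) \to \infty$ as $|\xi| \to \infty$. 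Given $\eta > 0$, I first choose $M > 0$ so large that $\frac{1}{4\pi^2 M^2}\sup_j \|D_\rho(u_j - u)\|_{L^2}^2 < \eta/2$, then $R$ with $|\xi|\widehat{Q}_\rho(\xi) \geq M$ on $B_R^c$; the split of $\|u_j - u\|_{L^2}^2$ exactly as in part (i) shows that the high-frequency tail is below $\eta/2$ uniformly in $j$, while the low-frequency part is bounded by $c_2^{-2}\|v_j - v\|_{L^2}^2 \to 0$.

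The main technical subtleties are the density-based extension of $\mathcal{G}_\rho u = \nabla(Q_\rho * u)$ to all $u \in H^\rho_0(\Omega)$ and the verification that $v$ lies in $W^{1,2}(\R^n)$ with support in a fixed bounded set depending only on $\Omega$ and $\supp \rho$; once these are in place, the split-frequency Fourier estimates reduce mechanically to the two quantitative bounds on $\widehat{Q}_\rho$ supplied by Lemma \ref{le:Qhatpositive}, and no step of the argument appears to present a serious obstacle.
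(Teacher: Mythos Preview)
Your proof is correct and takes a genuinely different route from the paper's. The paper introduces the tempered distribution $W_\rho$ from Proposition \ref{pr:Wrho}, splits it via a cutoff as $W_\rho = \chi W_\rho + (1-\chi)W_\rho$, and obtains the representation $u = L * D_\rho u + T_M(D_\rho u)$ with $L=(\chi W_\rho)^\vee$ smooth and $M$ a bounded Fourier multiplier; Poincar\'e then follows from Young's inequality plus Parseval, and compactness from the compactness of the integral operator $L*$ together with $|M(\xi)|\to 0$. You instead pass through the auxiliary function $v=Q_\rho * u \in W^{1,2}$ supported in $\overline{\Omega}+\overline{B_\delta}$ with $\nabla v = D_\rho u$, invoke the \emph{classical} Poincar\'e inequality and Rellich--Kondrachov theorem for $v$, and then invert $\widehat{u}=\widehat{v}/\widehat{Q}_\rho$ by a frequency split. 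Both arguments ultimately rest on the same quantitative input from Lemma \ref{le:Qhatpositive}, but your path is more elementary---it avoids constructing $W_\rho$ and the Paley--Wiener step, and reduces everything to standard Sobolev-space tools---whereas the paper's decomposition is the one that generalizes to the $L^p$ setting (Theorem \ref{th:lpbound}) and feeds into the fundamental theorem of calculus in Section \ref{se:fundamental}. One small point: in part \ref{item:Poincare2} you should take the weakly convergent subsequence in the reflexive space $H^\rho_0(\Omega)$ rather than merely in $L^2$, so that the limit $u$ is known to lie in $H^\rho_0(\Omega)$ and $\sup_j\|D_\rho(u_j-u)\|_{L^2}$ is finite; alternatively, run the high/low frequency split on $u_j-u_k$ and conclude via a Cauchy argument.
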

\begin{proof}
Note first that as $\rho$ has compact support, $\rho \in L^1 (\Rn)$, and, hence, Proposition \ref{pr:Wrho} can be applied.  \smallskip

Part \ref{item:Poincare1}. Let $W_\rho \in \Scal'(\R^n,\C^n)$ be as in \eqref{eq:vrhofourier} and let $\chi \in C_c^{\infty}(\R^n)$ be radial with $\chi =1$ on $B_1$. Then, we set
\[
L = \left(\chi W_\rho\right)^{\vee} \in C^{\infty}(\R^n,\R^n) \quad \text{and} \quad M = (1-\chi)W_\rho .
\]
We specify the above definitions.
Naturally, $\chi W_\rho \in \Scal'(\R^n,\C^n)$ is the distribution defined as
\[
 \inner{\chi W_\rho,\eta} = \lim_{r \downarrow 0} \int_{B_r^c} \chi(\xi) \frac{-i\xi}{2\pi\abs{\xi}^2\widehat{Q}_\rho(\xi)}\eta(\xi)\,d\xi , \qquad \eta \in \Scal(\R^n) ;
\]
cf.\ \eqref{eq:vrhofourier}.
As $\chi$ has compact support, so does $\chi W_\rho$, and, hence, by the Paley--Wiener theorem, $L$ is analytic.
Likewise, $M \in \Scal'(\R^n,\C^n)$ is the distribution defined as
\[
 \inner{M,\eta} = \int_{\Rn} \parent{1 - \chi(\xi)} \frac{-i\xi}{2\pi\abs{\xi}^2\widehat{Q}_\rho(\xi)}\eta(\xi)\,d\xi , \qquad \eta \in \Scal(\R^n) ,
\]
so $M$ can be identified with the function
\begin{equation}\label{eq:M}
 M(\xi) = \parent{1 - \chi(\xi)} \frac{-i\xi}{2\pi\abs{\xi}^2\widehat{Q}_\rho(\xi)} , \qquad \xi \in \Rn.
\end{equation}
Moreover, $M$ is smooth (by Lemma \ref{le:Q} \ref{item:Q3}, as $Q_\rho$ has compact support) and bounded by Lemma~\ref{le:Qhatpositive} and the fact
\begin{equation}\label{eq:limsuptinfty}
 \limsup_{t\uparrow \infty} \frac{t^{n-1}}{\overline{\rho} (1/t)} < \infty ,
\end{equation}
which is a consequence of the assumption in \ref{item:Poincare1}.
Therefore, we may define the bounded operator
\begin{equation}\label{eq:TM}
 T_M:L^2(\Rn,\R^n) \to L^2(\R^n), \quad U \mapsto \left(M \cdot \widehat{U}\right)^{\vee}.
\end{equation}
On the other hand, using Proposition \ref{pr:Wrho} we have that for all $\phi \in C_c^{\infty}(\Rn)$,
\[
 \widehat{\phi} = W_\rho \cdot \widehat{\Gcal_\rho \phi}  =  (\chi W_\rho) \cdot \widehat{\Gcal_\rho \phi} + M \cdot \widehat{\Gcal_\rho \phi},
\]
whence taking the inverse Fourier transform, we obtain
\begin{equation*}
 \phi = L *\Gcal_\rho \phi + T_M(\Gcal_\rho \phi).
\end{equation*}
By Theorem \ref{th:density} \ref{item:density1}, this identity can be extended to
\begin{equation}\label{eq:ftocsplitting2}
 u = L * D_\rho u + T_M (D_\rho u)  \ \ \text{for all $u \in H^{\rho}_0(\Omega)$}.
\end{equation}
Therefore, part \ref{item:Poincare1} follows with
\[
 C=  \norm{L}_{L^1(\O - \O_{\d},\R^n)} +  \norm{M}_{L^{\infty}(\R^n,\R^n)}.
\]
\smallskip

Part \ref{item:Poincare2}. We note that the operator $U \mapsto L*U$ is compact from $L^2(\Omega_\d,\R^n)$ to $L^2 (\O)$, since $L$ is locally bounded (see, e.g., \cite[Prop.\ 4.7]{Conway90}); the precise definition of $L*U$ is
\[
  L*U (x) = \int_{\O_{\d}} L (x-y) \cdot U(y) \, dy , \qquad x \in \O .
\]
Moreover, as a consequence of the assumption in \ref{item:Poincare2} we have
\[
 \lim_{t\uparrow \infty} \frac{t^{n-1}}{\overline{\rho} (1/t)} = 0 ,
\]
which implies that
\begin{equation*}\label{eq:Minfty}
 \abs{M(\xi)} \to 0 \text{ as } \abs{\xi} \to \infty 
\end{equation*}
thanks to Lemma \ref{le:Qhatpositive}. 
Therefore, for any $U \in L^2(\Rn,\Rn)$ with $\norm{U}^2_{L^2(\Rn,\Rn)} \leq 1$, it holds that
\[
\lim_{R \to \infty} \int_{B_R^c}\abs{\widehat{T_M U}}^2 \,d\xi \leq \lim_{R \to \infty} \sup_{\abs{\xi}\geq R} \abs{M(\xi)}^2 =0.
\]
By a version of the Fr\'{e}chet-Kolmogorov criterion in Fourier space, cf.~\cite[Th.\ 3]{Peg85}, we deduce that $T_M : L^2(\R^n,\R^n) \to L^2(\Omega)$ is compact.

The compact embedding of $H^\rho_0(\Omega)$ into $L^2(\R^n)$ (or, equivalently, $L^2(\O)$) is concluded thanks to identity \eqref{eq:ftocsplitting2} and the fact that both operators $L * \cdot$ and $T_M$ are compact from $L^2(\Omega_\d,\R^n)$ to $L^2 (\O)$, where in the case of $T_M$ it is understood that $L^2(\Omega_\d,\R^n)$ is extended as zero to $L^2(\R^n,\R^n)$.
\end{proof}

\begin{example}\label{ex:l2}
Classes of kernels of compact support satisfying  \ref{itm:h0}--\ref{itm:h1}, and $\lim_{t \downarrow 0}t^{n-1}\overline{\rho}(t) = \infty$ are:
\begin{enumerate}[label=(\alph*)]

\item Given $0 < s < 1$,
\[
 \rho (x) = \frac{\mathbbm{1}_{B_1}(x)}{\abs{x}^{n+s-1}} .
\]

\item Given $0 \leq s < 1$,
\[
\rho(x) = \mathbbm{1}_{B_1}(x) \frac{- \log \abs{x}}{\abs{x}^{n+s-1}}.
\]

\item Given $0 < s < 1$ and $r>0$ sufficiently small,
\[
\rho(x) = \frac{\mathbbm{1}_{B_{r}(x)}}{\abs{x}^{n+s-1} (- \log \abs{x})}.
\]
\end{enumerate}
\end{example}

\begin{example}
A kernel of compact support satisfying  \ref{itm:h0}--\ref{itm:h1}, and $\lim_{t \downarrow 0}t^{n-1}\overline{\rho}(t) > 0$ is
\[
 \rho (x) = \frac{\mathbbm{1}_{B_1}(x)}{\abs{x}^{n-1}}.
\]
\end{example}

We will see in Proposition~\ref{pr:Poincareconverse}, that under some additional assumptions on $\rho$, the growth conditions at the origin of Theorem~\ref{th:l2bound} are sharp in order to obtain the validity of a Poincar\'{e} inequality or a compact embedding, respectively.

\subsection{Poincar\'{e} inequality and Compactness in $L^p$}\label{se:PoincareLp}

We derive here an analogue of Theorem~\ref{th:l2bound} in the $L^p$ setting, by applying the Mihlin-H\"{o}rmander theorem to show that the function $M$ in the proof of Theorem~\ref{th:l2bound} is an $L^p$ multiplier. This requires us to study also the decay of the derivatives of $\widehat{Q}_\rho$, and we impose the following assumption for that:
\begin{enumerate}[label = (H\arabic*)]
\setcounter{enumi}{1}
\item \label{itm:h2} The function $f_\rho$ is smooth in $(0,\infty)$, and for $t \in (0,\epsilon)$,
\[
-C \frac{d}{dt}f_\rho(t) \geq \frac{f_\rho(t)}{t} \quad \text{and} \quad \abslr{\frac{d^k}{dt^k}f_\rho(t)} \leq C_k \frac{f_\rho(t)}{t^k} \quad \text{for $k \in \N$}.
\]
\end{enumerate}
The first condition is equivalent to the property that $t \mapsto t^{\nu}f_\rho(t)$ is decreasing on $(0,\epsilon)$ for some $\nu>0$, cf.~Remark~\ref{rem:h0h1relation}, while the second imposes that $f_\rho$ does not oscillate too much.
A consequence of \ref{itm:h2} that will be repeatedly used is
\begin{equation}\label{eq:H2cons}
 \abslr{\frac{d^l}{dt^l}(t^kf_\rho(t))} \leq C_k t^{k-l} f_\rho(t), \qquad 0 \leq l \leq k , \quad t \in (0, \e) ,
\end{equation}
as well as 
\begin{equation}\label{eq:H2cons2}
 \abslr{\frac{d^{k+1}}{dt^{k+1}}(t^k f_\rho(t))} \leq - C_k \frac{d}{dt}f_\rho(t) , \qquad k \in \N , \quad t \in (0, \e) .
\end{equation}
The following consequence is also useful.

\begin{lemma}\label{le:t2f}
Let $\rho$ satisfy \ref{itm:h0} and \ref{itm:h2}.
Then, $\lim_{t \downarrow 0} t^2 f_\rho(t)=0$.
\end{lemma}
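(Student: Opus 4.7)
The plan is to combine the integrability given by \ref{itm:h0} with the monotonicity of $f_\rho$ implicit in \ref{itm:h2}, and to conclude via an absolute-continuity argument on the integral $\int_0^\cdot f_\rho(s)\, s\, ds$. The strategy is standard: we show that the quantity $t^2 f_\rho(t)$ we want to vanish is controlled by a ``local'' piece of the integral of $f_\rho(s)\,s$, which must tend to zero because the global integral is finite.

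First, I would extract from \ref{itm:h2} that $f_\rho$ is strictly positive and strictly decreasing on $(0,\epsilon)$. Positivity follows from \ref{itm:h0}, which ensures $\inf_{B_\epsilon}\rho>0$ and hence $\overline{\rho}>0$ on $(0,\epsilon)$, so that $f_\rho(t)=t^{n-2}\overline{\rho}(t)>0$. The inequality $-C\,f_\rho'(t)\geq f_\rho(t)/t$ in \ref{itm:h2} then forces $f_\rho'(t)<0$ on $(0,\epsilon)$, so $f_\rho$ is decreasing.

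Second, from \ref{itm:h0} and \eqref{eq:H0corbarrho}, the identity $f_\rho(t)\, t = t^{n-1}\overline{\rho}(t)$ yields
\[
\int_0^{\epsilon} f_\rho(t)\, t\, dt \;=\; \int_0^{\epsilon} \overline{\rho}(t)\, t^{n-1}\, dt \;<\;\infty.
\]
By absolute continuity of the integral, the tail $\int_{t/2}^{t} f_\rho(s)\, s\, ds$ tends to $0$ as $t\downarrow 0$.

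Third, for $t\in(0,\epsilon/2)$, monotonicity gives $f_\rho(s)\geq f_\rho(t)$ for $s\in[t/2,t]$, so
\[
\int_{t/2}^{t} f_\rho(s)\, s\, ds \;\geq\; f_\rho(t)\int_{t/2}^{t} s\, ds \;=\; \frac{3}{8}\, t^2 f_\rho(t).
\]
Combining this with the previous step yields $t^2 f_\rho(t)\to 0$ as $t\downarrow 0$, which is the claim. There is no real obstacle here: the finer pieces of \ref{itm:h2} (the derivative bounds of every order) are not needed for this particular statement; only the sign information that $f_\rho$ is decreasing is used, together with the local integrability that is already built into \ref{itm:h0}.
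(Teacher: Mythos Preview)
Your proof is correct and takes a genuinely different route from the paper's. The paper first observes from \eqref{eq:H0corbarrho} that $\liminf_{t\downarrow 0} t^2 f_\rho(t)=0$, then writes $\epsilon^2 f_\rho(\epsilon)-t^2 f_\rho(t)=\int_t^\epsilon \frac{d}{dr}(r^2 f_\rho(r))\,dr$ and uses the derivative bound \eqref{eq:H2cons} (which relies on the \emph{upper} bounds $|f_\rho^{(k)}(t)|\leq C_k f_\rho(t)/t^k$ in \ref{itm:h2}) to show the integrand is absolutely integrable, so the limit exists and hence equals zero. You instead extract only the \emph{sign} information from the first inequality in \ref{itm:h2}, namely $f_\rho'<0$, and run a direct comparison of $t^2 f_\rho(t)$ against the tail $\int_{t/2}^{t} f_\rho(s)\,s\,ds$ of a finite integral. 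Your argument is more elementary and uses strictly less of \ref{itm:h2}: no product-rule estimate, no higher-order derivative bounds, and no separate ``liminf then limit exists'' step. The paper's approach, on the other hand, fits into the broader framework of \eqref{eq:H2cons}--\eqref{eq:H2cons2} that is reused in Lemma~\ref{le:Qhatderivdecay}, so it is natural in context even if heavier for this particular lemma.
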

\begin{proof}
As a consequence of \eqref{eq:H0corbarrho}, we have $\liminf_{t \downarrow 0} t^2 f_\rho(t) = 0$. Let $0 < t < \e$. By the fundamental theorem of calculus,
\begin{equation}\label{eq:t2f}
 \e^2 f_\rho(\e) - t^2 f_\rho(t) = \int_t^{\e} \frac{d}{dr}(r^2 f_\rho(r)) \, dr .
\end{equation}
Now, by \eqref{eq:H2cons} and \eqref{eq:H0corbarrho}
\[
 \int_0^{\e} \abslr{\frac{d}{dr}(r^2 f_\rho(r))} dr \leq C_2 \int_0^{\e} r f_\rho(r) \, dr < \infty .
\]
Therefore, the limit when $t \downarrow 0$ of the right-hand side of \eqref{eq:t2f} exists, and, consequently, so does the limit of the left-hand side, which proves the result.
\end{proof}

\begin{example}\label{ex:H2}
Classes of kernels $\rho$ satisfying \ref{itm:h0}--\ref{itm:h2} are:
\begin{enumerate}[label=(\alph*)]
\item $\rho$ of Example \ref{ex:H0} \ref{item:Riesz}.


\item\label{itm:exH2c} Given $0 \leq s < 1$ and a non-negative radial function $\chi \in C_c^{\infty}(\R^n)$ with $\chi(0)>0$ and $\chi(x)/\abs{x}^{1+s}$ radially decreasing,
\[
\rho(x) = \frac{\chi(x)}{\abs{x}^{n+s-1}}.
\]

\item\label{itm:exH2d} Given $0 \leq s < 1$ and a non-negative, radial function $\chi \in C_c^{\infty}(B_1)$ with $\chi(0)>0$ and $\chi(x)(-\log\abs{x})/\abs{x}^{1+s}$ radially decreasing,
\[
 \rho(x) = \frac{\chi(x) (- \log \abs{x})}{\abs{x}^{n+s-1}}.
\]
Indeed, \ref{itm:h0} and \ref{itm:h1} are simple to verify, whereas \ref{itm:h2} follows since the derivatives of $\log$ behave similarly as a power function.

\item\label{itm:exH2e} Given $0 \leq s < 1$ and a non-negative, radial function $\chi \in C_c^{\infty}(B_{1})$ with $\chi(0)>0$ and $\chi(x)/(-\abs{x}^{1+s}\log\abs{x})$ radially decreasing,
\[
 \rho (x) = \frac{\chi(x)}{\abs{x}^{n+s-1} (- \log \abs{x})}  .
\]
The verification of \ref{itm:h0}--\ref{itm:h2} is similar to that of the previous example.

\item\label{itm:exH2f} Given a smooth $s:[0,\infty) \to (0,1)$ and a non-negative, radial function $\chi \in C_c^{\infty}(\R^n)$ with $\chi(0)>0$ and $\chi(x)/\abs{x}^{1+s(\abs{x})}$ radially decreasing,
\[
 \rho(x) = \frac{\chi(x)}{\abs{x}^{n+ s(\abs{x})-1}}.
\]
Again, \ref{itm:h0} and \ref{itm:h1} follow readily, whereas for \ref{itm:h2} we first note that
\[
\frac{d}{dt}f_\rho(t) = \overline{\chi}'(t)\frac{1}{t^{1+ s(t)}}+\overline{\chi}(t)\left(\frac{-(1+ s(t))}{t^{2+ s(t)}}+\frac{-\log(t) s'(t)}{t^{1+ s(t)}}\right),
\]
which satisfies $\abs{\frac{d}{dt}f_\rho(t)} \leq Cf_\rho(t)/t$ since $t/\log t$ is locally bounded. The other derivatives can be bounded in a similar way, so \ref{itm:h2} holds.

\item If $\rho_1, \rho_2$ satisfy \ref{itm:h0}--\ref{itm:h2} and $\alpha_1, \alpha_2>0$ then $\alpha_1 \rho_1 + \alpha_2 \rho_2$ satisfies \ref{itm:h0}--\ref{itm:h2}.

\end{enumerate}
\end{example}

\begin{lemma}\label{le:Qhatderivdecay}
Let $\rho$ have compact support and satisfy \ref{itm:h0}--\ref{itm:h2}. Then, for every $\alpha \in \N^n$,
\[
\abs{\partial^{\alpha}\widehat{Q}_\rho(\xi)} \leq C_{\alpha} \left(\abs{\xi}^{-\abs{\alpha}}\widehat{Q}_\rho(\xi)+\abs{\xi}^{-\abs{\alpha}-1}\right) , \qquad \abs{\xi} \geq 1 .
\]
\end{lemma}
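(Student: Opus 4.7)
Since $\widehat{Q}_\rho$ is radial (by \eqref{eq:Qhatrho}) and $C^{\infty}$ on $\Rn$ (as $Q_\rho$ has compact support, see Lemma~\ref{le:Q}\,\ref{item:Q3}), write $\widehat{Q}_\rho(\xi) = F(\abs{\xi})$ for some $F \in C^{\infty}((0,\infty))$. The standard chain rule for radial functions yields
\[
\abs{\partial^{\alpha}\widehat{Q}_\rho(\xi)} \leq C_{\alpha} \sum_{k=1}^{|\alpha|} \abs{\xi}^{k-|\alpha|}\, \abs{F^{(k)}(\abs{\xi})},
\]
so it suffices to prove the one-dimensional estimate
\[
\abs{F^{(k)}(r)} \leq C_k\left(\frac{F(r)}{r^k}+\frac{1}{r^{k+1}}\right) \quad \text{for } r \geq 1, \ 0\leq k\leq |\alpha|.
\]
The case $k=0$ is immediate from the positivity of $F$ established in Lemma~\ref{le:Qhatpositive}. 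For $k\geq 1$ I would argue inductively using the coarea-type representation from the proof of Lemma~\ref{le:Qhatpositive},
\[
F(r) = \frac{1}{\pi r}\int_{\Sn_+} z_1\, G(rz_1)\,d\mathcal{H}^{n-1}(z), \qquad G(\theta):=\int_0^\infty f_\rho(s)\sin(2\pi\theta s)\,ds,
\]
and differentiate under the integral. Writing $F(r)=A(r)/r$ and applying Leibniz's rule gives the expansion
\[
F^{(k)}(r) = \sum_{j=0}^k c_{k,j}\, r^{-(k-j+1)} \int_{\Sn_+} z_1^{j+1}\, G^{(j)}(rz_1)\, d\mathcal{H}^{n-1}(z),
\]
with $G^{(j)}(\theta)= (2\pi)^j\int_0^\infty s^j f_\rho(s)\,\sigma_j(2\pi\theta s)\,ds$ for some $\sigma_j\in\{\pm\sin,\pm\cos\}$. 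Thus the core task is to control $G^{(j)}(\theta)$ for $\theta \geq 1$.

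The key pointwise estimate would be
\[
\abs{G^{(j)}(\theta)}\leq C_j\left(\frac{f_\rho(1/\theta)}{\theta}+1\right), \qquad \theta\geq 1,
\]
which I would prove by splitting the $s$-integral at $s=1/\theta$. On $(0,1/\theta)$, using $|\sigma_j|\leq 1$ together with the bound $\int_0^{1/\theta}s^j f_\rho(s)\,ds\leq C f_\rho(1/\theta)/\theta^{j+1}$ (a consequence of \ref{itm:h2} via the observation from Remark~\ref{rem:h0h1relation} that $t\mapsto t^\nu f_\rho(t)$ is decreasing on $(0,\epsilon)$ for some $\nu>0$). On $(1/\theta,\infty)\cap(0,\epsilon)$, integrate by parts once using \eqref{eq:H2cons} to estimate $\bigl|\tfrac{d}{ds}(s^j f_\rho(s))\bigr|\leq C_j s^{j-1}f_\rho(s)$: the boundary term at $s=1/\theta$ produces the leading $f_\rho(1/\theta)/\theta$ contribution and the remaining integral is absorbed into the same term, while the contribution from $s\geq\epsilon$ (where $f_\rho$ is smooth with compact support but \ref{itm:h2} no longer applies) is handled by ordinary integration by parts and contributes the $O(1)$ remainder. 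Combining this with the lower bound $F(r)\geq C f_\rho(1/r)/r^2$ from Lemma~\ref{le:Qhatpositive} and re-assembling via a spherical-average computation modelled on that same lemma then yields the desired one-dimensional estimate.

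The main technical obstacle is that the bound on $G^{(j)}(\theta)$ is only effective when $\theta=rz_1$ is large, whereas $z_1$ ranges over $(0,1]$ in the spherical integral. This is handled by splitting the domain of $z$ into $\{z_1\leq 1/r\}$ and $\{z_1\geq 1/r\}$: on the former, the weight $z_1^{j+1}$ combines with the crude bound $|G^{(j)}(\theta)|\leq C$ (absolutely convergent when $j\geq 1$, and, for $j=0$, bounded by $|G(\theta)|\leq 2\pi\theta\int_0^\infty sf_\rho(s)\,ds\leq C\theta$ via $|\sin x|\leq |x|$ and \ref{itm:h0}) to produce an $O(r^{-k-1})$ remainder, while on the latter the main estimate applies and feeds into the $F(r)/r^k$ term through the Lemma~\ref{le:Qhatpositive} spherical argument.
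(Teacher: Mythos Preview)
Your reduction to the one-dimensional estimate via the radial chain rule and the Leibniz expansion
\[
F^{(k)}(r)=\sum_{j=0}^{k}c_{k,j}\,r^{-(k-j+1)}\int_{\Sn_+}z_1^{j+1}G^{(j)}(rz_1)\,d\mathcal{H}^{n-1}(z)
\]
is sound, but the key pointwise estimate you propose,
\[
\abs{G^{(j)}(\theta)}\le C_j\Bigl(\frac{f_\rho(1/\theta)}{\theta}+1\Bigr),
\]
is too weak to close the argument: it lacks the decay factor $\theta^{-j}$. Consider the $j=k$ term. Using $f_\rho$ decreasing and the lower bound $F(r)\ge C f_\rho(1/r)/r^{2}$ from Lemma~\ref{le:Qhatpositive}, your estimate yields at best
\[
r^{-1}\int_{\Sn_+}z_1^{k+1}\,\frac{f_\rho(1/(rz_1))}{rz_1}\,d\mathcal{H}^{n-1}(z)\;\le\; C\,r^{-2}f_\rho(1/r)\;\le\; C\,F(r),
\]
not $F(r)/r^{k}$; and the ``$+1$'' contribution gives $r^{-1}$, not $r^{-k-1}$. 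For the truncated Riesz kernel $f_\rho(t)\sim t^{-1-s}$ one has $F(r)\sim r^{s-1}$, so this really is off by a factor $r^{k}$. A single integration by parts on $(1/\theta,\epsilon)$ cannot repair this: the remaining integral $\theta^{-1}\int_{1/\theta}^{\epsilon}s^{j-1}f_\rho(s)\,ds$ is only $O\bigl(f_\rho(1/\theta)/\theta\bigr)$, with no extra $\theta^{-j}$.

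The paper obtains the missing decay by integrating by parts $j$ times, which transfers all $j$ powers of $s$ onto $j$ derivatives and yields
\[
\absB{\int_0^{\infty}s^{j}f_\rho(s)\,a_j(2\pi\theta s)\,ds}\;\le\;\frac{C_j}{\theta^{j}}\Bigl(\int_0^{\infty}f_\rho(s)\sin(2\pi\theta s)\,ds+1\Bigr)=\frac{C_j}{\theta^{j}}\bigl(G(\theta)+1\bigr).
\]
The crucial step after the $j$-fold integration by parts is not a crude absolute-value bound but an oscillatory estimate: writing $b_j(s)=\tfrac{d^{j}}{ds^{j}}(s^{j}f_\rho(s))$, one pairs consecutive half-periods and controls $\abs{b_j(s)-b_j(s+\tfrac{1}{2\theta})}$ via \eqref{eq:H2cons2} by $C_j\bigl(f_\rho(s)-f_\rho(s+\tfrac{1}{2\theta})\bigr)$, which reassembles exactly into $G(\theta)$. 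Substituting $\abs{G^{(j)}(\theta)}\le C_j\theta^{-j}(G(\theta)+1)$ into your Leibniz expansion, the $z_1^{j+1}/(rz_1)^{j}=z_1/r^{j}$ cancellation reduces every term to $r^{-(k+1)}\int_{\Sn_+}z_1\,G(rz_1)\,d\mathcal{H}^{n-1}(z)=\pi r^{-k}F(r)$ plus $r^{-(k+1)}$, which is the desired bound. So the fix is: replace ``integrate by parts once'' by ``integrate by parts $j$ times'', and replace the comparison with $f_\rho(1/\theta)/\theta$ by the oscillatory comparison with $G(\theta)$ itself.
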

\begin{proof}
\textit{Step 1: Integral bounds.}
We show that for all $\theta > 1/\epsilon$ and $k \in \N$,
\begin{equation}\label{eq:intbounds}
\abslr{\int_0^\infty r^{k}f_\rho(r)a_k(2 \pi \theta r)\,dr} \leq \frac{C_k}{\theta^k}\int_0^\infty f_\rho(r)\sin(2 \pi \theta r)\,dr+\frac{C_k}{\theta^k},
\end{equation}
where $a_k =\cos$ when $k$ is odd and $a_k = \sin$ when $k$ is even.

Using integration by parts $k$ times we obtain
\begin{equation}\label{eq:intbypartsk}
\int_0^\infty r^k f_\rho(r)a_k(2 \pi \theta r)\,dr = \frac{D_k}{\theta^k}\int_{0}^\infty \frac{d^k}{dr^k}(r^kf_\rho(r)) \sin(2\pi\theta r)\,dr,
\end{equation}
where the constants $D_k$ may be negative.
Indeed, equality \eqref{eq:intbypartsk} follows directly by integration by parts except possibly for the boundary terms.
They turn out to be zero since $\rho$ has compact support, $\lim_{t \downarrow 0}\frac{d^l}{dt^l}(t^kf_\rho(t))=0$ when $l \leq k-2$ and $\lim_{t \downarrow 0}\frac{d^l}{dt^l}(t^kf_\rho(t))\sin(2\pi\theta t) =0$ when $l=k-1$ by \eqref{eq:H2cons} and Lemma \ref{le:t2f}; this also shows that the intermediate integrals leading to \eqref{eq:intbypartsk} in the induction process are finite.
The final integral on the right-hand side of \eqref{eq:intbypartsk} is also finite by \eqref{eq:H2cons}, \eqref{eq:H0corbarrho} and the compact support of $\rho$.
Setting $N=\lfloor \theta \epsilon \rfloor \geq \theta \epsilon /2$ (where $\floor{\cdot}$ denotes the integer part) and $b_k(r)=\frac{d^k}{dr^k}(r^kf_\rho(r))$, we estimate the right-hand side of \eqref{eq:intbypartsk} as follows:
\begin{equation}\label{eq:estimatedkdtk}
 \abslr{\int_{0}^\infty \frac{d^k}{dr^k}(r^kf_\rho(r)) \sin(2\pi\theta r)\,dr} \leq  \abslr{\int_0^{\frac{N}{\theta}}b_k(r) \sin(2\pi\theta r)\,dr} + \int_{\epsilon/2}^\infty\abslr{\frac{d^k}{dr^k}(r^kf_\rho(r))} dr ,
\end{equation}
since $N \geq \theta \epsilon /2$.
On the one hand,
\[
 \int_{\epsilon/2}^\infty\abslr{\frac{d^k}{dr^k}(r^kf_\rho(r))}\,dr \leq C_k \int_{\epsilon/2}^\infty f_\rho(r) \,dr \leq C_k ,
\]
by \eqref{eq:H2cons} and \eqref{eq:H0corbarrho}.
On the other hand, as in \eqref{eq:oscillatory1} and \eqref{eq:oscillatory2},
\begin{equation}\label{eq:oscilatorybk}
 \int_0^{\frac{N}{\theta}}b_k(r) \sin(2\pi\theta r)\,dr = \sum_{j=0}^{N-1} \int_{\frac{j}{\theta}}^{\frac{j+ \frac{1}{2}}{\theta}} \left(b_k(r)-b_k\left(r+\frac{1}{2\theta}\right)\right)\sin(2\pi\theta r)\,dr .
\end{equation}
By the fundamental theorem of calculus,
\begin{align*}
\abslr{ b_k(r)-b_k\left(r+\frac{1}{2\theta}\right)} &\leq \int_{r}^{r+\frac{1}{2\theta}} \abslr{\frac{d}{dt}b_k(t)}\,dt \\&\leq C_k \int_{r}^{r+\frac{1}{2\theta}} -\frac{d}{dt}f_\rho(t)\,dt =C_k \left( f_\rho (r) - f_\rho \left(r+\frac{1}{2\theta}\right) \right),
\end{align*}
where we have used \eqref{eq:H2cons2}.
Thus,
\[
  \sum_{j=0}^{N-1} \int_{\frac{j}{\theta}}^{\frac{j+ \frac{1}{2}}{\theta}} \abslr{b_k(r)-b_k\left(r+\frac{1}{2\theta}\right)}\sin(2\pi\theta r)\,dr \leq C_k \int_0^{\frac{N}{\theta}} f_\rho (r) \sin(2\pi\theta r)\,dr ,
\]
reasoning as in \eqref{eq:oscilatorybk}.
Putting together the formulas following \eqref{eq:estimatedkdtk}, we obtain
\[
 \abslr{\int_{0}^\infty \frac{d^k}{dr^k}(r^kf_\rho(r)) \sin(2\pi\theta r)\,dr} \leq C_k \int_{0}^{\frac{N}{\theta}}f_\rho(r)\sin(2\pi\theta r)\,dr + C_k .
\]
Together with \eqref{eq:intbypartsk}, the bounds \eqref{eq:intbounds} follow.

\smallskip

\textit{Step 2: Conclusion.} Recalling formula \eqref{eq:Qhatrho}, we obtain, from Leibniz' rule and interchanging the derivative with the integral, that 
\begin{equation}\label{eq:qhatderiv}
\abs{\partial^{\alpha} \widehat{Q}(\xi)} \leq C_{\alpha} \sum_{k=0}^{\abs{\alpha}} \frac{1}{\abs{\xi}^{\abs{\alpha}+1-k}}\abslr{\int_{\R^n} \frac{\rho(x)x_1^{k+1}}{\abs{x}^2}a_k(2\pi\abs{\xi}x_1)\,dx}, \qquad \xi \in \Rn \setminus \{ 0 \} .
\end{equation}
When $k=0$, the term in the sum is equal to $2\pi\abs{\xi}^{-\abs{\alpha}}\widehat{Q}_\rho(\xi)$, cf.~\eqref{eq:Qhatrho}, which already has the correct form. For $k \geq 1$, we can compute as in \eqref{eq:hatQcoarea} that
\[
 \int_{\R^n} \frac{\rho(x)x_1^{k+1}}{\abs{x}^2}a_k(2\pi\abs{\xi}x_1)\,dx = 2 \int_{\Sn_+} z_1^{k+1} \int_0^\infty r^{k}f_\rho(r)a_k(2\pi\abs{\xi}z_1r)\,dr\,d \mc{H}^{n-1} (z) .
\]
We split the integral in $\Sn_+$ into $\{z_1\abs{\xi} > 1/\epsilon\}$ and $\{z_1\abs{\xi} \leq 1/\epsilon\}$.
In the first subset we have, thanks to \eqref{eq:intbounds} with $\theta=\abs{\xi}z_1$,
\begin{align*}
 &\abslr{\int_{\Sn_+ \cap \{z_1\abs{\xi} > 1/\epsilon\}} z_1^{k+1}\int_0^\infty r^{k}f_\rho(r)a_k(2\pi\abs{\xi}z_1r)\,dr\,d \mc{H}^{n-1} (z)} \\
 & \leq C_k \abslr{\int_{\Sn_+ \cap \{z_1\abs{\xi} > 1/\epsilon\}} \frac{z_1^{k+1}}{(\abs{\xi}z_1)^{k}}\left(\int_0^\infty f_\rho(r)\sin(2\pi\abs{\xi}z_1r)\,dr+1\right)\,d \mc{H}^{n-1} (z)} \\
 & \leq \frac{C_k}{\abs{\xi}^k}\abslr{\int_{\Sn_+} z_1\int_0^\infty f_\rho(r)\sin(2\pi\abs{\xi}z_1r)\,dr\,d \mc{H}^{n-1} (z)} + \frac{C_k}{\abs{\xi}^{k}} = \frac{C_k}{\abs{\xi}^{k-1}}\widehat{Q}_\rho(\xi) + \frac{C_k}{\abs{\xi}^{k}} ,
\end{align*}
where the last equality is due to \eqref{eq:hatQcoarea}.
In the second subset we have, for $\abs{\xi} \geq 1$ and $k \geq 1$,
\begin{align*}
& \abslr{\int_{\Sn_+ \cap \{z_1\abs{\xi} \leq 1/\epsilon\}} z_1^{k+1}\int_0^\infty r^{k}f_\rho(r)a_k(2\pi\abs{\xi}z_1r)\,dr\,d \mc{H}^{n-1} (z)} \\
& \leq \abslr{\int_{\Sn_+ \cap \{z_1\abs{\xi} \leq 1/\epsilon\}} \frac{1}{(\abs{\xi}\epsilon)^{k+1}}\int_0^\infty r^{k}f_\rho(r)\,dr\,d \mc{H}^{n-1} (z)} \leq \frac{C_k}{\abs{\xi}^{k+1}} \leq \frac{C_k}{\abs{\xi}^{k}}, 
\end{align*}
where we have used \eqref{eq:H0corbarrho} and the compact support of $\rho$.
All in all, we have shown
\[
 \abslr{\int_{\R^n} \frac{\rho(x)x_1^{k+1}}{\abs{x}^2}a_k(2\pi\abs{\xi}x_1)\,dx} \leq \frac{C_k}{\abs{\xi}^{k-1}}\widehat{Q}_\rho(\xi) + \frac{C_k}{\abs{\xi}^{k}} , \qquad \abs{\xi} \geq 1 , \quad k \geq 0 ,
\]
which yields the result thanks to \eqref{eq:qhatderiv}.
\end{proof}

With these bounds we can apply the Mihlin-H\"{o}rmander theorem to prove the analogue of Theorem~\ref{th:l2bound} in the $L^p$-setting.

\begin{theorem}\label{th:lpbound}
Let $p \in (1,\infty)$.
Let $\Omega \subset \R^n$ be open and bounded and suppose that $\rho$ satisfies \ref{itm:h0}--\ref{itm:h2} and has compact support. Then, the following two statements hold:
\begin{enumerate}[label = (\roman*)]
\item\label{item:Poincarep1} If $\liminf_{t \downarrow0} t^{n-1}\overline{\rho}(t)>0$, then there is a $C=C(\Omega,n,\rho)>0$ such that
\[
\norm{u}_{L^p(\Omega)} \leq C \norm{D_\rho u}_{L^p(\R^n,\R^n)} \quad \text{for all $u \in H^{\rho,p}_0(\Omega)$}.
\]
\item\label{item:Poincarep2} If $\lim_{t \downarrow 0}t^{n-1}\overline{\rho}(t)=\infty$, then $H^{\rho,p}_0(\Omega)$ is compactly embedded into $L^p(\R^n)$.
\end{enumerate}
\end{theorem}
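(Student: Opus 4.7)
The plan is to follow the architecture of the proof of Theorem~\ref{th:l2bound}, but to replace the $L^2$-bound of $T_M$ via Parseval by an application of the Mihlin--H\"ormander multiplier theorem, for which the derivative estimates of Lemma~\ref{le:Qhatderivdecay} are tailored. Accordingly, I choose a smooth radial cutoff $\chi$ with $\chi = 1$ on $B_1$ and $\chi \in C^\infty_c(\Rn)$, set $L = (\chi W_\rho)^{\vee}$ (analytic by Paley--Wiener, in particular smooth and bounded on every compact set), identify $M = (1-\chi)W_\rho$ with the smooth function in \eqref{eq:M}, and extend the representation $u = L*D_\rho u + T_M(D_\rho u)$ from $C^\infty_c(\Omega)$ to all $u \in H^{\rho,p}_0(\Omega)$ by density (Theorem~\ref{th:density}\,\ref{item:density1}). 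Both parts of the statement then reduce to suitable $L^p$-mapping properties of $U \mapsto L*U$ and $T_M$ from $L^p(\Omega_\delta,\Rn)$ into $L^p(\Omega)$.

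For part~\ref{item:Poincarep1}, the core task is to verify the Mihlin condition $|\partial^\alpha M(\xi)| \leq C_\alpha |\xi|^{-|\alpha|}$ on $\Rn \setminus \{0\}$ for $|\alpha| \leq \lfloor n/2 \rfloor + 1$. Since $M$ vanishes on $B_1$ and is smooth on $\Rn \setminus \{0\}$, only large $|\xi|$ requires genuine work. There Lemma~\ref{le:Qhatpositive} delivers $\widehat{Q}_\rho(\xi) \geq C |\xi|^{-1}$, so the second summand on the right of Lemma~\ref{le:Qhatderivdecay} is absorbed by the first, yielding $|\partial^\alpha \widehat{Q}_\rho(\xi)| \leq C_\alpha |\xi|^{-|\alpha|}\widehat{Q}_\rho(\xi)$. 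A Fa\`a di Bruno induction then produces $|\partial^\alpha (1/\widehat{Q}_\rho)(\xi)| \leq C_\alpha |\xi|^{-|\alpha|}/\widehat{Q}_\rho(\xi)$, and combining this with the elementary bound $|\partial^\alpha (\xi_j/|\xi|^2)| \leq C_\alpha |\xi|^{-1-|\alpha|}$ and one more application of the lower bound on $\widehat{Q}_\rho$ closes the Mihlin estimate. The multiplier theorem then yields $L^p$-boundedness of $T_M$, while $U \mapsto L*U$ is $L^p$-bounded from $L^p(\Omega_\delta,\Rn)$ to $L^p(\Omega)$ because its kernel $L(x-y)$ is continuous on the compact set $\overline{\Omega} - \overline{\Omega_\delta}$; combined with the splitting identity this delivers the Poincar\'e inequality.

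For part~\ref{item:Poincarep2}, I will show that both $L*$ and $T_M$ are compact from $L^p(\Omega_\delta,\Rn)$ into $L^p(\Omega)$. Compactness of $L*$ follows from it being an integral operator on bounded sets with continuous kernel (uniform polynomial approximation of the kernel yields norm-approximation by finite-rank operators). Compactness of $T_M$ will be obtained by operator-norm approximation: with $\phi_R$ a smooth radial cutoff equal to $1$ on $B_R$ and supported in $B_{2R}$, the multiplier $M\phi_R$ is smooth and compactly supported, so $(M\phi_R)^{\vee} \in \Scal(\Rn,\Rn)$ and $T_{M\phi_R}$ maps $L^p(\Omega_\delta,\Rn)$ boundedly into $W^{1,p}(\Omega)$, hence compactly into $L^p(\Omega)$ by Rellich--Kondrachov. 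For the tail $M(1-\phi_R)$, the sharper bound $\widehat{Q}_\rho(\xi) \geq C\, \overline{\rho}(1/|\xi|)/|\xi|^n$ from Lemma~\ref{le:Qhatpositive} together with hypothesis~\ref{item:Poincarep2} gives $|\xi|\widehat{Q}_\rho(\xi) \geq g(|\xi|)$ with $g(|\xi|) \to \infty$ as $|\xi| \to \infty$; rerunning the derivation upgrades the Mihlin bound to $|\partial^\alpha M(\xi)| \leq C_\alpha |\xi|^{-|\alpha|}/g(|\xi|)$ for large $|\xi|$. A Leibniz expansion handling the cross-terms with $\phi_R$ (each supported in $\{|\xi| \sim R\}$ with derivative size $R^{-|\beta|}$) then produces Mihlin constants of order $C_\alpha/g(R)$ for the whole symbol $M(1-\phi_R)$, so the multiplier theorem gives $\|T_{M(1-\phi_R)}\|_{L^p \to L^p} \to 0$ as $R \to \infty$, exhibiting $T_M$ as an operator-norm limit of compact operators.

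The step I expect to be most delicate is the Fa\`a di Bruno/Leibniz bookkeeping that converts the $\widehat{Q}_\rho$ derivative bounds into quantitative bounds for $1/\widehat{Q}_\rho$ (and hence for $M$), together with the uniform tracking of the Mihlin constants of the tail $M(1-\phi_R)$ through all orders $|\alpha| \leq \lfloor n/2 \rfloor + 1$. Once these technicalities are in place, the remainder is a clean parallel of the $L^2$ argument.
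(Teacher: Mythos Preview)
Your argument for part~\ref{item:Poincarep1} is essentially identical to the paper's: the derivative bounds of Lemma~\ref{le:Qhatderivdecay} together with $\widehat{Q}_\rho(\xi) \geq C/|\xi|$ feed into a Fa\`a di Bruno computation yielding the Mihlin estimate \eqref{eq:pM}, and the Poincar\'e inequality follows from the splitting \eqref{eq:ftocsplitting2} and density exactly as you describe.

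For part~\ref{item:Poincarep2} you take a genuinely different route. The paper does not redo the multiplier analysis; instead it observes that $T_M$ is already known to be compact from $L^2(\Omega_\delta,\Rn)$ to $L^2(\Omega)$ (from Theorem~\ref{th:l2bound}\,\ref{item:Poincare2}, via the Fourier-space Fr\'echet--Kolmogorov criterion) and bounded on every $L^p$ by part~\ref{item:Poincarep1}, and then invokes Krasnoselskii's interpolation theorem to transfer compactness from $p=2$ to all $p \in (1,\infty)$. Your approach---approximating $T_M$ in operator norm by the compact operators $T_{M\phi_R}$ via the quantitative refinement $|\partial^\alpha M(\xi)| \leq C_\alpha |\xi|^{-1-|\alpha|}\widehat{Q}_\rho(\xi)^{-1} \leq C_\alpha |\xi|^{-|\alpha|}/g(|\xi|)$, and then reading the Mihlin--H\"ormander bound on the operator norm of the tail---is correct and self-contained: it avoids both the $L^2$ case and interpolation theory, at the price of the Leibniz/Fa\`a di Bruno bookkeeping you rightly flagged as the delicate step. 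The paper's route is shorter precisely because it recycles the $L^2$ compactness already in hand, whereas yours would work even without having proved the $L^2$ case first. One minor point: for $T_{M\phi_R}$ you should note that its range consists of restrictions to $\Omega$ of functions in $W^{1,p}(\Rn)$ (since $(M\phi_R)^\vee \in \Scal$), so no boundary regularity on $\Omega$ is needed for the Rellich--Kondrachov step.
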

\begin{proof}
Part \ref{item:Poincarep1}. In light of Lemma~\ref{le:Qhatpositive} and \eqref{eq:limsuptinfty}, we find that
\begin{equation}\label{eq:hatQxi-1}
 \widehat{Q}_\rho(\xi) \geq C/\abs{\xi} \qquad \text{for } \abs{\xi} \geq 1 ,
\end{equation}
so it follows from Lemma~\ref{le:Qhatderivdecay} that for every $\alpha \in \N^n$,
\[
\abs{\partial^{\alpha}\widehat{Q}_\rho(\xi)} \leq C_\alpha \abs{\xi}^{-\abs{\alpha}}\widehat{Q}_\rho(\xi) \quad \text{for $\abs{\xi} \geq 1$}.
\]
Applying Fa\`a di Bruno's formula for the derivatives of a composition,
\[
 \abs{\partial^{\alpha}\widehat{Q}_{\rho}^{-1}} \leq C_\alpha \sum_{k=1}^{\abs{\alpha}} \widehat{Q}_\rho^{-1-k} \sum_{j_1, \ldots , j_{\abs{\a} - k + 1}} B_{j_1, \ldots , j_{\abs{\a} - k + 1}} ,
\]
where the indices $j_1, \ldots , j_{\abs{\a} - k + 1} \in \N$ in the sum are taken with the restrictions
\[
 j_1 + \cdots + j_{\abs{\a} - k + 1} = k \quad \text{and} \quad j_1 + \cdots + (\abs{\a} - k + 1) j_{\abs{\a} - k + 1} = \abs{\a} ,
\]
and $B_{j_1, \ldots , j_{\abs{\a} - k + 1}}$ is a symbol for all products of the absolute value of the partial derivatives of $\widehat{Q}_{\rho}$ with $j_i$ derivatives of order $i$, for $i=1, \ldots, \abs{\a} - k + 1$.
Thus, 
\[
 B_{j_1, \ldots , j_{\abs{\a} - k + 1}} \leq C_{\a} \parent{\abs{\xi}^{-1} \widehat{Q}_{\rho}}^{j_1} \cdots \parent{\abs{\xi}^{-(\abs{\a}-k+1)} \widehat{Q}_{\rho}}^{j_{\abs{\a}-k+1}} .
\]
Hence, for $\abs{\xi}\geq 1$, using \eqref{eq:hatQxi-1},
\[
 \abs{\partial^{\alpha}\widehat{Q}_{\rho}^{-1}} \leq C_{\a} \abs{\xi}^{-\abs{\a}} \widehat{Q}_{\rho}^{-1} \leq C_{\a} \abs{\xi}^{1-\abs{\a}} .
\]
Now, defining $R(\xi) = \xi \abs{\xi}^{-2}$, we have that
\[
 \abs{\partial^{\alpha} R(\xi)} \leq C_\alpha \abs{\xi}^{-1-\abs{\alpha}} .
\]

Let $\chi \in C^{\infty}_c (\Rn)$ and $M \in C^{\infty}(\R^n,\R^n)$ be as in the proof of Theorem~\ref{th:l2bound}, so that identity \eqref{eq:M} holds.
With the calculations above, thanks to Leibniz' rule we can estimate for all $\alpha \in \N^n$ that
\begin{equation}\label{eq:pM}
\abs{\partial^{\alpha} M(\xi)} \leq C_{\alpha}\abs{\xi}^{-1-\abs{\alpha}} \widehat{Q}_{\rho}^{-1} \leq C_{\alpha}\abs{\xi}^{-\abs{\alpha}}, \quad \text{for $\abs{\xi}\geq 1$}.
\end{equation}
As such, the Mihlin-H\"{o}rmander theorem (cf.~\cite[Th.\ 6.2.7]{Gra14a}) implies that the operator $T_M$ of \eqref{eq:TM} can be extended to a bounded operator from $L^p(\R^n,\R^n)$ to $L^p(\R^n)$. The Poincar\'{e} inequality can now be argued as in Theorem~\ref{th:l2bound}. \smallskip

Part \ref{item:Poincarep2}. We saw in the proof of Theorem~\ref{th:l2bound} \ref{item:Poincare2} that the operator $T_M$ is compact from $L^2(\Omega_\d,\R^n)$ to $L^2(\Omega)$, since $\lim_{t \downarrow 0}t^{n-1}\overline{\rho}(t)=\infty$.
As a consequence of the proof of part \ref{item:Poincarep1}, $T_M$ is bounded from $L^p(\Omega_\d,\R^n)$ to $L^p(\Omega)$ for all $p \in (1,\infty)$ and, hence, by Krasnoselskii's interpolation theorem \cite[Th.\ IV.2.9]{BeS88}, also compact for all $p \in (1,\infty)$. The compact embedding now follows as in the proof of Theorem~\ref{th:l2bound} \ref{item:Poincare2}.
\end{proof}

\begin{example}\label{ex:one}
Let $p \in (1,\infty)$ and let $\Omega \subset \R^n$ be open and bounded.

\begin{enumerate}[label=(\alph*)]

\item
Let $0 \leq s < 1$, $\chi \in C_c^{\infty}(\R^n)$ and $\rho$ be as in Example \ref{ex:H2} \ref{itm:exH2c}.
By Theorem \ref{th:lpbound}, if $s=0$ we have a Poincar\'{e} inequality, while for $s>0$ we obtain compactness.
This constitutes an alternative proof as well as a generalization of the results \cite[Thms.\ 6.2 and 7.3]{BeCuMo22}, where it was assumed that $\chi$ is constant around the origin and radially decreasing.

\item Let $0 \leq s < 1$ and $\rho$ be as in Example \ref{ex:H2} \ref{itm:exH2d}.
By Theorem \ref{th:lpbound}, we have both a Poincar\'{e} inequality and compactness.

\item
Let $0 \leq s < 1$ and $\rho$ be as in Example \ref{ex:H2} \ref{itm:exH2e}.
By Theorem \ref{th:lpbound}, if $s>0$ we have both a Poincar\'{e} inequality and compactness.

\item
Let $s:[0,\infty) \to (0,1)$ and $\rho$ be as in Example \ref{ex:H2} \ref{itm:exH2f}.
By Theorem \ref{th:lpbound}, we have both a Poincar\'{e} inequality and compactness.

\end{enumerate}
\end{example}

\section{Fundamental theorem of calculus}\label{se:fundamental}

In this section, we will study when the inverse Fourier transform of $W_\rho$ (see~\eqref{eq:vrhofourier}) is a locally integrable function, and identify its behavior at the origin. This function will then give us an analogue of the fundamental theorem of calculus for the nonlocal gradient; precisely, applying the inverse Fourier transform to \eqref{eq:ftoctempered} yields $u = V_{\rho} * \mc{G}_{\rho} u$ with $V_\rho:=W_\rho^{\vee}$.

For $W_\rho^{\vee}$ to be a locally integrable function, we require the following mild assumptions, saying that the kernel $\rho$ lies between two fractional kernels:
\begin{enumerate}[label = (H\arabic*)]
\setcounter{enumi}{2}

\item \label{itm:h3} The function $g_\rho:(0,\infty) \to \R, \ t \mapsto t^{n+\sigma-1}\overline{\rho}(t)$ is almost decreasing on $(0,\epsilon)$ for some $\sigma \in (0,1)$; 

\item \label{itm:h4} the function $h_\rho:(0,\infty) \to \R, \ t \mapsto t^{n+\gamma-1}\overline{\rho}(t)$ is almost increasing on $(0,\epsilon)$ for some $\gamma \in (0,1)$.
\end{enumerate}

Recall from the notation section the definition of \emph{almost decreasing/increasing}.
We will use, as a consequence of \ref{itm:h3} and Lemma~\ref{le:Qhatpositive} that
\begin{equation}\label{eq:tn-1r}
\widehat{Q}_\rho(\xi) \geq C \abs{\xi}^{\sigma-1}, \quad \abs{\xi} \geq 1.
\end{equation}
Note also that \ref{itm:h3}--\ref{itm:h4} require $\sigma \leq \gamma$; this can be seen directly or invoking \eqref{eq:doubling} below.

\begin{example}\label{ex:H4}
Classes of kernels $\rho$ satisfying \ref{itm:h0}--\ref{itm:h4} are:
\begin{enumerate}[label=(\alph*)]
\item\label{itm:exH4a} Given $s \in (0,1)$,
\[
 \rho (x) = \frac{1}{\abs{x}^{n+s-1}} .
\]
In this case, one can take $\sigma = \gamma = s$.
The same conclusion holds for 
\begin{equation}\label{eq:truncatedRiesz}
 \rho(x) = \frac{\chi(x)}{\abs{x}^{n+s-1}}
\end{equation}
with $\chi \in C_c^{\infty}(\R^n)$ a non-negative radial function such that $\chi(0)>0$ and $\chi(x)/\abs{x}^{1+s}$ is radially decreasing.

\item\label{itm:exH4b} Given $s \in (0,1)$,
\begin{equation}\label{eq:truncatedlog}
 \rho(x) = \frac{\chi(x) (- \log \abs{x})}{\abs{x}^{n+s-1}}
\end{equation}
with $\chi \in C_c^{\infty}(B_1)$ a non-negative radial function such that $\chi(0)>0$ and $\chi(x)(-\log\abs{x})/\abs{x}^{1+s}$ is radially decreasing. In this case one can take $\sigma = s$ and any $\gamma \in (s, 1)$.

\item\label{itm:exH4c} Given $s \in (0,1)$,
\begin{equation}\label{eq:truncatedlogdown}
 \rho(x) = \frac{\chi(x)}{\abs{x}^{n+s-1}(- \log \abs{x})}
\end{equation}
with $\chi \in C_c^{\infty}(B_1)$ a non-negative radial function such that $\chi(0)>0$ and $\chi(x)/(-\abs{x}^{1+s}\log\abs{x})$ is radially decreasing. In this case one can take any $\sigma  \in(0,s)$ and $\gamma=s$.

\item Given a smooth $s:[0,\infty) \to (0,1)$ and a non-negative radial function $\chi \in C_c^{\infty}(\R^n)$ such that $\chi(0)>0$ and $\chi(x)/\abs{x}^{1+s(\abs{x})}$ is radially decreasing,
\[
 \rho(x) = \frac{\chi(x)}{\abs{x}^{n+ s(\abs{x})-1}}.
\]
In this case, one can take $\sigma = \min_{[0, \e]} s$ and $\gamma = \max_{[0, \e]} s$ for any $\epsilon>0$.

\item If $\rho_1, \rho_2$ satisfy \ref{itm:h0}--\ref{itm:h4} and $\alpha_1, \alpha_2 >0$ then $\alpha_1 \rho_1 + \alpha_2 \rho_2$ satisfies \ref{itm:h0}--\ref{itm:h4}.
In fact, let $\sigma_1, \sigma_2$ be the exponents of \ref{itm:h3} for $\rho_1, \rho_2$, respectively; let $\gamma_1, \gamma_2$ be the exponents of \ref{itm:h4} for $\rho_1, \rho_2$, respectively.
Then, \ref{itm:h3} holds for $\alpha_1 \rho_1 + \alpha_2 \rho_2$ with the exponent $\min \{ \sigma_1, \sigma_2 \}$, while \ref{itm:h4} holds with the exponent $\max \{ \gamma_1, \gamma_2 \}$.

\end{enumerate}
\end{example}

In the following result, the proof that $V_\rho$ is a function is adapted from that of \cite[Th.\ 5.9]{BeCuMo22}, while the bounds around the origin on $V_\rho$ require different arguments since we cannot compare it with the Riesz kernel.

\begin{theorem}\label{th:NTFC}
Let $\rho$ have compact support and satisfy \ref{itm:h0}--\ref{itm:h4}. Then, there exists a vector radial function $V_\rho \in C^{\infty}(\R^n\setminus \{0\},\R^n) \cap L^1_{\loc} (\R^n,\R^n)$ such that for all $\phi \in C_c^{\infty}(\R^n)$,
\begin{equation}\label{eq:ftoc}
\phi(x) = \int_{\R^n} V_\rho(x-y) \cdot\Gcal_\rho \phi(y)\,dy \quad \text{for all $x \in \R^n$}.
\end{equation}
Moreover, there is a constant $C=C(n,\rho)>0$ such that for $x \in B_{\epsilon} \setminus\{0\}$,
\begin{equation}\label{eq:Vrhobound}
\abs{V_\rho(x)}\leq \frac{C}{\abs{x}^{2n-1}\rho(x)} \quad \text{and} \quad \abs{\nabla V_\rho(x)} \leq \frac{C}{\abs{x}^{2n}\rho(x)}.
\end{equation}
\end{theorem}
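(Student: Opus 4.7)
The plan is to define $V_\rho$ as the inverse Fourier transform of the tempered distribution $W_\rho$ from Proposition~\ref{pr:Wrho}, derive \eqref{eq:ftoc} from \eqref{eq:ftoctempered}, and establish the pointwise bounds through a radial Bessel-transform analysis. Choose a radial $\chi\in C_c^\infty(\R^n)$ with $\chi\equiv 1$ on $B_1$, and split $W_\rho=\chi W_\rho+(1-\chi)W_\rho$. As in the proof of Theorem~\ref{th:l2bound}, $\chi W_\rho$ has compact support, so $L:=(\chi W_\rho)^\vee$ is entire (Paley--Wiener), whereas the second summand identifies with the smooth function $M(\xi)$ from \eqref{eq:M}. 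Set $V_\rho:=L+M^\vee$. For $\phi\in C_c^\infty(\R^n)$, Proposition~\ref{pr:Qhat}\,\ref{item:Qhat1} yields $\mathcal{G}_\rho\phi\in C_c^\infty(\R^n,\R^n)$ and hence $\widehat{\mathcal{G}_\rho\phi}\in\mathcal{S}(\R^n,\C^n)$; applying the inverse Fourier transform to \eqref{eq:ftoctempered} gives $\phi=V_\rho*\mathcal{G}_\rho\phi$ in $\mathcal{S}'$, which passes to the pointwise formula~\eqref{eq:ftoc} once $V_\rho\in L^1_{\loc}$.

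For the $C^\infty$-regularity of $V_\rho$ on $\R^n\setminus\{0\}$, I would first combine Lemma~\ref{le:Qhatpositive} with \ref{itm:h3} to derive \eqref{eq:tn-1r}, and then combine this with Lemma~\ref{le:Qhatderivdecay} and the Fa\`a-di-Bruno expansion of $1/\widehat{Q}_\rho$ (already used in the proof of Theorem~\ref{th:lpbound}) to obtain the symbol estimates
\[
|\partial^\alpha M(\xi)|\leq C_\alpha(1+|\xi|)^{-\sigma-|\alpha|},\qquad \xi\in\R^n,\ \alpha\in\N^n.
\]
Consequently, for any multi-index $\beta$ and any integer $N>\tfrac12(n+|\beta|-\sigma)$, the function $(-\Delta)^N\bigl(\xi^\beta M(\xi)\bigr)$ lies in $L^1(\R^n)$, which forces $|x|^{2N}\partial^\beta M^\vee$ to be continuous and bounded on $\R^n$. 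Hence $M^\vee\in C^\infty(\R^n\setminus\{0\},\R^n)$ with polynomial decay at infinity, and therefore $V_\rho\in C^\infty(\R^n\setminus\{0\},\R^n)$.

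The pointwise bound \eqref{eq:Vrhobound} is the main obstacle. Since $L\in C^\infty(\R^n,\R^n)$, I can focus on $M^\vee$ and on small $|x|$. Because $V_\rho$ is vector radial, the inverse Hankel transform for radial functions (as in Remark~\ref{rem:QrhoBessel}, combined with the Bessel recurrence $\tfrac{d}{ds}(s^\nu J_\nu(s))=s^\nu J_{\nu-1}(s)$) expresses the radial profile of $M^\vee$ as an oscillatory integral of the schematic form
\[
\bar V_\rho(r)\sim \frac{1}{r^{(n-2)/2}}\int_0^\infty\frac{1-\chi(\tau)}{\tau\,\widehat{Q}_\rho(\tau)}\,J_{n/2}(2\pi r\tau)\,\tau^{n/2}\,d\tau+\text{smooth part}.
\]
Inserting $\widehat{Q}_\rho(\tau)\geq C\bar\rho(1/\tau)/\tau^n$ from Lemma~\ref{le:Qhatpositive} and splitting the $\tau$-integral at $\tau\simeq 1/r$, the low-frequency part $\tau\lesssim 1/r$ is bounded via $|J_{n/2}(s)|\lesssim s^{n/2}$, trading powers of $\tau$ for powers of $r$ and terminating by the one-sided upper bound \ref{itm:h3}; the high-frequency part $\tau\gtrsim 1/r$ is bounded via $|J_{n/2}(s)|\lesssim s^{-1/2}$, whose contribution concentrates near $\tau\simeq 1/r$ and is tamed by the almost-monotonicity \ref{itm:h4}. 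The resulting estimate $|\bar V_\rho(r)|\leq C/(r^{2n}\bar\rho(r))$ is equivalent to the first bound in \eqref{eq:Vrhobound}; the gradient bound follows from the same analysis with one additional factor of $\tau$ in the integrand that is traded for a power of $r$ through the same frequency split.

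Finally, local integrability of $V_\rho$ is automatic: combining the pointwise bound with \ref{itm:h4} gives $|V_\rho(x)|\leq C|x|^{\gamma-n}$ for $|x|<\epsilon$, which is integrable near $0$ since $\gamma\in(0,1)$, while outside $B_\epsilon$ integrability follows from the regularity established above; this justifies the upgrade from the distributional identity of Step~1 to \eqref{eq:ftoc}. The hardest step is the Hankel-integral estimate for $\bar V_\rho$: unlike the setting of \cite[Th.\ 5.9]{BeCuMo22}, the general hypotheses \ref{itm:h1}--\ref{itm:h4} do not allow a reduction to an explicit Riesz potential, so the sharp bound in terms of $\rho$ must be extracted directly from the oscillatory integral by balancing the doubling/monotonicity hypotheses against Bessel asymptotics at scale $\tau\sim 1/r$.
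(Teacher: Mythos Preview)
Your overall architecture coincides with the paper's: define $V_\rho=W_\rho^\vee$, split $W_\rho$ into a compactly supported piece (Paley--Wiener) and the smooth symbol $M$, and derive $C^\infty$-regularity of $M^\vee$ on $\R^n\setminus\{0\}$ from the symbol bounds $|\partial^\alpha M(\xi)|\le C_\alpha(1+|\xi|)^{-\sigma-|\alpha|}$. Where you diverge is in the derivation of the pointwise bound \eqref{eq:Vrhobound}. The paper does \emph{not} pass through a Hankel integral; instead it writes $|K(x)|=|(\partial_1^n W_\rho^2)^\vee(|x|e_1)|/(2\pi|x|)^n$, then performs one further Cartesian integration by parts in the ball $B_{1/|x|}$ (going back to $\partial_1^{n-1}W_\rho^2$ with a factor $|x|$, plus a boundary term), while keeping $\partial_1^n W_\rho^2$ on $B_{1/|x|}^c$. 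Each of the three resulting pieces is then estimated using Lemma~\ref{le:Qhatpositive} together with \ref{itm:h4} (for the inner ball), nothing (for the sphere), and \ref{itm:h3} (for the outer region), all giving $C/(|x|^{n-1}\rho(x))$ before the prefactor $|x|^{-n}$ is restored.

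Your Bessel-transform route is a legitimate alternative idea, but as sketched it has a genuine gap in the high-frequency part. Using only $|J_{n/2}(s)|\lesssim s^{-1/2}$ leaves an integrand of order $\tau^{(n-1)/2}/(\tau\widehat{Q}_\rho(\tau))\gtrsim \tau^{(n-1)/2-\sigma}$ on $\{\tau\gtrsim 1/r\}$, which diverges for $n\ge 2$; the claim that this contribution ``concentrates near $\tau\simeq 1/r$'' is precisely what requires justification, and needs either an integration by parts in $\tau$ via the Bessel recurrence (mirroring the paper's Cartesian trick) or explicit use of the oscillation of $J_{n/2}$. Relatedly, you have the roles of \ref{itm:h3} and \ref{itm:h4} reversed: the low-frequency piece ($1/\tau\ge r$) is controlled by the almost-\emph{increasing} hypothesis \ref{itm:h4}, the high-frequency piece by the almost-\emph{decreasing} hypothesis \ref{itm:h3}, and the final local integrability $|V_\rho(x)|\le C|x|^{\sigma-n}$ comes from \ref{itm:h3}, not \ref{itm:h4}. (Your stated bound $|\bar V_\rho(r)|\le C/(r^{2n}\bar\rho(r))$ is also off by one power of $r$; the theorem asserts $r^{2n-1}$.)

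Finally, you omit a step the paper treats explicitly: knowing that $M^\vee$ agrees with a smooth $K$ on $\R^n\setminus\{0\}$ and that $K\in L^1_{\loc}$ only gives $M^\vee=K+V_0$ with $V_0$ supported at the origin. The paper rules out $V_0\neq 0$ by observing it must be a finite sum of derivatives of $\delta_0$, whose Fourier transform is a polynomial, and then comparing with the decay of $M$ and $\widehat{K}$ at infinity. Without this, your passage from the distributional identity \eqref{eq:ftoctempered} to the pointwise convolution \eqref{eq:ftoc} is not justified.
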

\begin{proof}
In light of Proposition~\ref{pr:Wrho} and the well-known interaction between Fourier transforms and multiplication and convolution, it suffices to show that the inverse Fourier transform $V_\rho:=W_\rho^{\vee}$ agrees with a locally integrable function with the stated properties. \smallskip

\textit{Step 1: $V_\rho$ is a function.} Let $\chi \in C_c^{\infty}(\R^n)$ be a radial cut-off function with $\chi \equiv 1$ on $B_{2/\epsilon}$.
As in the proof of Theorem \ref{th:l2bound} \ref{item:Poincare1}, we can write
\[
 W_\rho = \chi W_\rho + (1-\chi)W_\rho=:W_\rho^1 + W_\rho^2.
\]
Since $W_\rho^1$ has compact support, it follows from the Paley-Wiener theorem that $(W_\rho^1)^{\vee}$ is analytic. We also observe that $W_\rho^2$ is actually a smooth locally integrable function, namely,
\[
W^2_\rho(\xi) = (1-\chi(\xi))\frac{-i\xi}{2\pi\abs{\xi}^2\widehat{Q}_\rho(\xi)} ,
\]
as in \eqref{eq:M}.
From Lemma~\ref{le:Qhatderivdecay} and its consequence \eqref{eq:pM}, and \eqref{eq:tn-1r} we obtain that for any $\alpha \in \N^n$,
\begin{equation}\label{eq:w2bound}
\abs{\partial^{\alpha} W^2_\rho(\xi)} \leq \frac{C_{\alpha}}{(1+\abs{\xi}^{1+\abs{\alpha}}) \widehat{Q}_{\rho} (\xi)} \leq \frac{C_{\alpha}}{1+\abs{\xi}^{\abs{\alpha}+\sigma}}, \quad \xi \in \R^n.
\end{equation}
If $\abs{\alpha} \geq n + m$ for some $m \in \N$, we deduce from \cite[Exercise~2.4.1]{Gra14a} that 
\begin{equation}\label{eq:Kform}
(\partial^{\alpha} W^2_\rho)^{\vee} = (-2\pi i \cdot)^{\alpha} (W^2_\rho)^{\vee},
\end{equation}
lies in $C^m(\R^n)$. In particular, we find that $(W^2_\rho)^{\vee}$ coincides with a smooth function $K:\R^n \setminus\{0\} \to \R^n$ outside the origin. We show in step 2 below that $K$ is integrable, which implies that
\[
(W^2_\rho)^{\vee} = K + V_0,
\]
where $V_0$ is a tempered distribution supported at the origin. Therefore, by \cite[Prop.\ 2.4.1]{Gra14a}, $V_0$ can be written as a linear combination of derivatives of Dirac deltas, i.e.,
\[
V_0 = \sum_{\abs{\alpha} \leq k} c_\alpha \partial^{\alpha} \delta_0 \quad \text{and} \quad \widehat{V_0} = \sum_{\abs{\alpha} \leq k} c_\alpha (2\pi i \cdot)^{\alpha}
\]
for some $k \in \N$ and $c_{\a} \in \C$.
We then obtain that
\[
W^2_\rho = \widehat{K} + \sum_{\abs{\alpha} \leq k} c_\alpha (2\pi i \cdot)^{\alpha}.
\]
However, $W^2_\rho$ vanishes at infinity (thanks to \eqref{eq:w2bound}) and so does $\widehat{K}$ as the Fourier transform of an integrable function, so we must have $c_\alpha = 0$ for all $\alpha \in \N^n$ with $\abs{\alpha} \leq k$.
This shows that $(W^2_\rho)^{\vee} = K$, and hence
\[
V_\rho = W_\rho^{\vee} = (W^1_\rho)^{\vee} + (W^2_\rho)^{\vee} \in C^{\infty}(\R^n\setminus\{0\};\R^n),
\]
is a locally integrable function.
Note that $V_\rho$ is real-valued and vector radial since $W_\rho$ is imaginary-valued and vector radial. \smallskip

\textit{Step 2: $K$ is integrable.} By choosing $\alpha = (n,0,\ldots,0)$ and $\alpha = (n+1,0,\cdots,0)$ in~\eqref{eq:Kform} and using symmetry considerations, we have for $x \not = 0$ that
\[
\abs{K(x)} = \abs{K(\abs{x}e_1)} = \frac{\abs{(\partial^{n}_1 W^2_\rho)^{\vee}(\abs{x}e_1)}}{(2\pi  \abs{x})^{n}} \quad \text{and} \quad \abs{K(x)} = \abs{K(\abs{x}e_1)} = \frac{\abs{(\partial^{n+1}_1 W^2_\rho)^{\vee}(\abs{x}e_1)}}{(2\pi  \abs{x})^{n+1}}.
\]
The function $(\partial^{n+1}_1 W^2_\rho)^{\vee}$ is bounded since $\partial^{n+1}_1 W^2_\rho$ is integrable by \eqref{eq:w2bound}; consequently, we deduce from the second identity that
\[
\abs{K(x)} \leq \frac{C}{\abs{x}^{n+1}} \quad x \in \R^n\setminus\{0\},
\]
which shows that $K$ is integrable away from the origin.
Near the origin, we use that $\partial^{n}_1 W^2_\rho$ is also integrable (cf.~\eqref{eq:w2bound}). Hence, we may utilize the standard formula of the Fourier transform and partial integration for $0<\abs{x} < 2/\epsilon$, to find
\begin{align*}
\abslr{(\partial^{n}_1 W^2_\rho)^{\vee}(\abs{x}e_1)} &= \abslr{\int_{\R^n} \partial^n_1 W^2_\rho(\xi)e^{2\pi i \abs{x}\xi_1}\,d\xi}\\
&\leq 2\pi\abs{x}\abslr{\int_{B_{1/\abs{x}}\setminus B_{2/\epsilon}} \partial^{n-1}_1 W^2_\rho(\xi) e^{2\pi i \abs{x}\xi_1}\,d\xi} \\
&\quad + \abslr{\int_{\partial B_{1/\abs{x}}} \partial^{n-1}_1 W^2_\rho(\xi) e^{2\pi i \abs{x}\xi_1}\,d\Hcal^{n-1}(\xi)}
+ \abslr{\int_{B_{1/\abs{x}}^c} \partial^n_1 W^2_\rho(\xi)e^{2\pi i \abs{x}\xi_1}\,d\xi} \\
&\leq C\abs{x} \int_{B_{1/\abs{x}}\setminus B_{2/\epsilon}} \frac{d\xi}{\abs{\xi}^{n}\widehat{Q}_\rho(\xi)} + C\int_{\partial B_{1/\abs{x}}} \frac{d\Hcal^{n-1}(\xi)}{\abs{\xi}^{n}\widehat{Q}_\rho(\xi)}
 + C\int_{B_{1/\abs{x}}^c} \frac{d\xi}{\abs{\xi}^{n+1}\widehat{Q}_\rho(\xi)} .
\end{align*}
The first inequality is integration by parts in $B_{1/\abs{x}}$ and using that $(1-\chi)$ and its derivatives are zero on $B_{2/\epsilon}$, whereas the second inequality uses \eqref{eq:w2bound}.
We now estimate each of the three terms of the right-hand side of the last inequality.
We may use Lemma~\ref{le:Qhatpositive} and \ref{itm:h4} to find
\begin{align*}
\abs{x} \int_{B_{1/\abs{x}}\setminus B_{2/\epsilon}} \frac{d\xi}{\abs{\xi}^{n}\widehat{Q}_\rho(\xi)} &\leq C\abs{x}\int_{B_{1/\abs{x}}\setminus B_{2/\epsilon}} \frac{1}{\overline{\rho}(1/\abs{\xi})}\,d\xi
= C\abs{x} \int_{B_{1/\abs{x}}\setminus B_{2/\epsilon}} \frac{1}{\abs{\xi}^{n+\gamma-1}h_\rho(1/\abs{\xi})}\,d\xi\\
&\leq \frac{C\abs{x}}{h_\rho(\abs{x})}\int_{B_{1/\abs{x}}} \frac{1}{\abs{\xi}^{n+\gamma-1}}\,d\xi = \frac{C\abs{x}}{\abs{x}^{1-\gamma} h_\rho(\abs{x})} = \frac{C}{\abs{x}^{n-1}\rho(x)}.
\end{align*}
For the second term we use only Lemma~\ref{le:Qhatpositive} and find
\begin{align*}
\int_{\partial B_{1/\abs{x}}} \frac{d\Hcal^{n-1}(\xi)}{\abs{\xi}^{n}\widehat{Q}_\rho(\xi)} \leq C\int_{\partial B_{1/\abs{x}}} \frac{1}{\overline{\rho}(1/\abs{\xi})} \,d\Hcal^{n-1}(\xi) = \frac{C}{\abs{x}^{n-1}\rho(x)}.
\end{align*}
Finally, for the last term we compute with Lemma~\ref{le:Qhatpositive} and \ref{itm:h3}
\begin{align*}
\int_{B_{1/\abs{x}}^c} \frac{d\xi}{\abs{\xi}^{n+1}\widehat{Q}_\rho(\xi)} &\leq  C \int_{B_{1/\abs{x}}^c} \frac{1}{\abs{\xi}\overline{\rho}(1/\abs{\xi})}\,d\xi
 = C \int_{B_{1/\abs{x}}^c} \frac{1}{\abs{\xi}^{n+\sigma}g_\rho(1/\abs{\xi})}\,d\xi \\
&\leq  \frac{C}{g_\rho(\abs{x})} \int_{B_{1/\abs{x}}^c} \frac{1}{\abs{\xi}^{n+\sigma}}\,d\xi = \frac{C\abs{x}^\sigma}{g_\rho(\abs{x})} = \frac{C}{\abs{x}^{n-1}\rho(x)}.
\end{align*}
All in all, and using also \ref{itm:h3}, this shows that for $0<\abs{x} < \epsilon/2$
\begin{equation}\label{eq:kbound}
\abs{K(x)} = \frac{\abs{(\partial^{n}_1 W^2_\rho)^{\vee}(\abs{x}e_1)}}{(2\pi \abs{x})^{n}}\leq \frac{C}{\abs{x}^{2n-1}\rho(x)} \leq \frac{C}{\abs{x}^{n- \sigma}} ,
\end{equation}
which proves that $K$ is also integrable around the origin. \smallskip

\textit{Step 3: Bounds on $V_\rho$.} Since $V_\rho$ coincides with $K$ up to a smooth function we deduce from \eqref{eq:kbound} that the first inequality in \eqref{eq:Vrhobound} holds on any bounded set on which $\rho$ is positive, in particular, on $B_{\epsilon}\setminus\{0\}$. The bound on the gradient of $V_\rho$ follows from analogous calculations to that of step 2, since
\[
 \nabla K = ((2\pi i\cdot) \otimes W^2_\rho)^{\vee}.
\]
\end{proof}

When we apply Theorem \ref{th:NTFC} to Example \ref{ex:H4} \ref{itm:exH4a}, we recover and generalize the nonlocal fundamental theorem of calculus of \cite[Th.\ 4.5]{BeCuMo22}. We can also extend \eqref{eq:ftoc} to the setting of Sobolev spaces.
\begin{corollary}\label{cor:ftoc}
Let $\rho$ have compact support and satisfy \ref{itm:h0}--\ref{itm:h4}. Let $p \in [1,\infty]$ and $\Omega \subset \Rn$ be open and bounded. Then, for all $u \in H^{\rho,p}_0(\Omega)$ it holds that
\[
u(x) = \int_{\R^n} V_\rho(x-y)\cdot D_\rho u(y)\,dy \quad \text{for a.e.~$x \in \Rn$}.
\]
\end{corollary}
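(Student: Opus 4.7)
The proof proceeds by density, extending the pointwise identity \eqref{eq:ftoc} from $C_c^\infty(\Rn)$ to $H^{\rho,p}_0(\Omega)$. Fix $\delta > 0$ with $\supp \rho \subset \overline{B_\delta}$, so that $\Omega_\delta = \Omega + B_\delta$ is bounded. A first key observation is that $D_\rho u$ has compact support contained in $\overline{\Omega_\delta}$. Indeed, setting $u_j := \eta_{1/j} * u$ for a standard family of mollifiers $\eta_{1/j} \in C_c^\infty(B_{1/j})$, we have $u_j \in C_c^\infty(\Rn)$ with $\supp u_j \subset \overline{\Omega} + \overline{B_{1/j}}$, hence $\supp \mathcal{G}_\rho u_j \subset \overline{\Omega_\delta + B_{1/j}}$. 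By Lemma~\ref{le:convolutiongradient}, $D_\rho u_j = \eta_{1/j} * D_\rho u$, and standard mollification converges to $D_\rho u$ in $L^p(\Rn,\Rn)$ when $p < \infty$, and a.e.\ with a uniform $L^\infty$-bound when $p = \infty$; passing to an a.e.\ convergent subsequence shows $\supp D_\rho u \subset \overline{\Omega_\delta}$.

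Since $V_\rho \in L^1_{\loc}(\Rn,\Rn)$ by Theorem~\ref{th:NTFC} and $D_\rho u$ has compact support $K := \overline{\Omega_\delta}$, the convolution $V_\rho * D_\rho u$ is well-defined a.e.: on any bounded set $B \subset \Rn$, choose $R > 0$ with $B - K \subset B_R$, and write
\[
(V_\rho * D_\rho u)\big|_B = \bigl((V_\rho \mathbbm{1}_{B_R}) * D_\rho u\bigr)\big|_B,
\]
where $V_\rho \mathbbm{1}_{B_R} \in L^1(\Rn,\Rn)$, so that Young's convolution inequality applies.

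For the case $p \in [1,\infty)$, the mollifications $u_j = \eta_{1/j} * u$ satisfy $u_j \to u$ in $L^p(\Rn)$, $D_\rho u_j \to D_\rho u$ in $L^p(\Rn,\Rn)$, and $\supp D_\rho u_j \subset \overline{\Omega_\delta + B_1}$ for $j \geq 1$. Theorem~\ref{th:NTFC} applied to $u_j$ gives
\[
u_j(x) = \int_{\Rn} V_\rho(x-y) \cdot \mathcal{G}_\rho u_j(y) \, dy \quad \text{for every } x \in \Rn.
\]
Restricting to any bounded open set $B$ and using Young's inequality with a fixed truncation $V_\rho \mathbbm{1}_{B_R}$ whose radius $R$ is large enough that $B - \overline{\Omega_\delta + B_1} \subset B_R$, we obtain $V_\rho * D_\rho u_j \to V_\rho * D_\rho u$ in $L^p(B,\Rn)$. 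Combined with $u_j \to u$ in $L^p(B)$, this yields $u = V_\rho * D_\rho u$ a.e.\ on every bounded $B$, and therefore a.e.\ on $\Rn$.

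For $p = \infty$, since $\Omega$ is bounded and both $u$ and $D_\rho u$ are bounded functions with compact support (the latter by the first step), we have $u \in H^{\rho,q}_0(\Omega)$ for every $q \in [1,\infty)$, and the conclusion follows from the case just proved. The main technical point is ensuring that $\supp D_\rho u_j$ stays uniformly bounded, so that the $L^p$-convergence of the gradients can be transferred through a single integrable truncation of $V_\rho$; this is exactly what allows Young's inequality to close the argument despite $V_\rho$ itself not being globally integrable.
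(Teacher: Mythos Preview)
Your proof is correct and follows essentially the same approach as the paper's: a density argument via mollification, combined with Young's inequality and the local integrability of $V_\rho$, exploiting that the nonlocal gradients stay supported in a fixed compact set. The only organizational difference is that the paper immediately reduces to $p=1$ via the inclusion $H^{\rho,p}_0(\Omega) \subset H^{\rho,1}_0(\Omega)$ (valid since $\Omega$ is bounded and $D_\rho u$ is compactly supported), whereas you treat $p\in[1,\infty)$ directly and then reduce $p=\infty$ to the finite-$p$ case; your proof is also more explicit about the compact support of $D_\rho u$ and the truncation of $V_\rho$, which the paper leaves implicit.
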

\begin{proof}
Since $\Omega$ is bounded, we find $H^{\rho,p}_0(\Omega) \subset H^{\rho,1}_0(\O)$, so it suffices to prove the statement for $u \in H^{\rho,1}_0(\O)$. This can be done by a simple mollification argument. Indeed, we can find a sequence $(\phi_j)_j \subset C_c^{\infty}(\O')$ for some open and bounded $\O' \subset \Rn$, such that $\phi_j \to u$ in $L^1(\R^n)$ and $\Gcal_\rho \phi_j \to D_\rho u$ in $L^1(\Rn,\Rn)$. Then, the fact that $(\Gcal_\rho \phi_j)_j$ is supported in a fixed compact set and $V_\rho$ is locally integrable yields by Young's convolution inequality
\[
\phi_j = \int_{\R^n} V_\rho(\cdot-y)\cdot \Gcal_\rho \phi_j(y)\,dy \to \int_{\R^n} V_\rho(\cdot-y)\cdot D_\rho u(y)\,dy \ \text{in $L^1_{\mathrm{loc}}(\Rn)$ as $j \to \infty$}.
\]
Hence, $u$ must coincide with the right-hand side, which proves the statement.
\end{proof}

\section{Embeddings}\label{se:embedding}

The aim of this section is to apply the nonlocal fundamental theorem of calculus (Theorem \ref{th:NTFC}) to prove embeddings, Poincar\'{e} inequalities and compactness results.
One of the advantages of this approach is that some of them can also be proven for $p=1$ and $p=\infty$, which is not possible with purely Fourier arguments, as in Sections \ref{se:PoincareL2} and \ref{se:PoincareLp}.
Moreover, the embeddings shown are not restricted to Lebesgue or H\"older spaces, but to the more general Orlicz spaces and spaces with a prescribed modulus of continuity.
Thus, the proof of those embeddings cannot be obtained by reducing to the fractional setting.

Throughout this section, we assume $\rho$ satisfies \ref{itm:h0}--\ref{itm:h4}. To start the analysis of the embeddings, we introduce the modulus of continuity $\omega: [0,\epsilon) \to [0, \infty)$
\[
 \omega(t) = \begin{cases}
  \frac{1}{t^{n-1}\overline{\rho}(t)} & \text{if } t \in (0, \epsilon) , \\
  0 & \text{if } t = 0 .
  \end{cases}
\]
It is a modulus of continuity in the sense that it is continuous by \ref{itm:h2} and \ref{itm:h3} and almost increasing by \ref{itm:h3}.
In view of \ref{itm:h3} and \ref{itm:h4}, we have that there exists $C>0$ such that for all $t \in (0,\epsilon)$ and $\lambda \in [1,\epsilon/t)$,
\begin{equation}\label{eq:doubling}
\frac{t^{\gamma}}{C}\leq \omega(t) \leq Ct^\sigma \quad \text{and} \quad \frac{\lambda^\sigma \omega(t)}{C} \leq \omega(\lambda t)\leq C\lambda^{\gamma} \omega(t) .
\end{equation}
The second inequalities show that we may take scaling factors outside $\omega$, which we will often use without mention.

\subsection{Embeddings into Orlicz spaces}

Our first result (Theorem \ref{th:HLS}) is an embedding into Orlicz spaces, together with its associated Poincar\'e inequality.
Its proof uses an inequality of the style of Hardy-Littlewood-Sobolev for generalized Riesz potentials proved in  \cite{KNS21}.

As usual in the theory of Orlicz spaces, we say that $A:[0,\infty) \to [0,\infty)$ is a Young function if it is continuous, strictly increasing, convex, with $A(0)=0$ and $\lim_{t \to \infty}A(t)=\infty$;
note that any Young function is invertible. Then, for $\Omega \subset \R^n$ open one can define the Orlicz space 
\[
 L^A(\Omega) = \{u:\Omega \to \R \ \text{measurable} \,:\, \norm{u}_{L^{A}(\Omega)} < \infty\},
\]
with the Luxemburg norm
\[
 \norm{u}_{L^{A}(\Omega)} = \inf\left\{\lambda > 0\,:\, \int_{\Omega} A\left(\frac{\abs{u(x)}}{\lambda}\right)\,dx\leq 1\right\}.
\]
Of course, if $A(t)=t^p$ with $p \in (1, \infty)$ then $L^{A}(\Omega)=L^p(\Omega)$ with the same norm.

In the proof of Theorem \ref{th:HLS} below, given a measurable function $\tilde{\omega}:(0,\infty) \to (0,\infty)$ we will use the operator $I_{\tilde{\omega}}$, sending measurable functions $u : \Rn \to \R$ to measurable functions $I_{\tilde{\omega}} (u) : \Rn \to \R$, defined as
\[
 I_{\tilde{\omega}} (u)(x) := \int_{\R^n} \frac{\tilde{\omega}(\abs{x-y})}{\abs{x-y}^n} u (y) \,dy , \qquad x \in \Rn .
\]
In particular, we will use the boundedness properties of this operator between Lebesgue and Orlicz spaces proved in \cite[Cor.\ 3.8\,(i)]{KNS21}, which we reproduce for ease of reference.

\begin{proposition}\label{pr:KNS}
Let $p \in (1, \infty)$.
Assume that:
\begin{enumerate}[label=(\alph*)]
\item\label{itm:NKSa} $\displaystyle \int_0^1 \frac{\tilde{\omega}(t)}{t} \, dt < \infty$.

\item\label{itm:NKSb} There exist $C, K_1, K_2 >0$ with $K_1 < K_2$ such that for all $r > 0$,
\[
 \sup_{r \leq t \leq 2r} \tilde{\omega}(t) \leq C \int_{K_1 r}^{K_2 r} \frac{\tilde{\omega}(t)}{t} \, dt .
\]

\item\label{itm:NKSc} There exists $C>0$ such that for all $r>0$,
\begin{equation}\label{eq:NKSc}
 \frac{1}{r^{n/p}} \int_{0}^r \frac{\tilde{\omega}(t)}{t}\,dt +\int_r^{\infty} \frac{\tilde{\omega}(t)}{t^{1+n/p}}\,dt \leq C A^{-1}(1/r^n) .
\end{equation}
\end{enumerate}
Then, $I_{\tilde{\omega}}$ is bounded from $L^p(\R^n)$ to $L^{A}(\R^n)$.
\end{proposition}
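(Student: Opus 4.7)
The plan is to follow Hedberg's classical splitting argument, adapted to the generalized modulus $\tilde{\omega}$ and to the Orlicz target. For each $x\in\R^n$ and each $r>0$ I split
\[
I_{\tilde{\omega}}|u|(x) = \int_{B_r(x)} \frac{\tilde{\omega}(|x-y|)}{|x-y|^n}|u(y)|\,dy + \int_{B_r(x)^c} \frac{\tilde{\omega}(|x-y|)}{|x-y|^n}|u(y)|\,dy,
\]
and estimate each piece via a dyadic decomposition. For the local part, writing $B_r(x) = \bigcup_{k\geq 0}\{2^{-k-1}r\leq |x-y|<2^{-k}r\}$, bounding $\tilde{\omega}$ on each annulus by its supremum, and using assumption \ref{itm:NKSb} to replace that supremum by an integral average of $\tilde{\omega}(t)/t$, I expect to obtain
\[
\int_{B_r(x)} \frac{\tilde{\omega}(|x-y|)}{|x-y|^n}|u(y)|\,dy \leq C\,Mu(x)\int_0^{r}\frac{\tilde{\omega}(t)}{t}\,dt,
\]
where $Mu$ denotes the Hardy-Littlewood maximal function and where \ref{itm:NKSa} guarantees finiteness of the right-hand side.

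For the nonlocal part, the dyadic decomposition $B_r(x)^c=\bigcup_{k\geq 0}\{2^kr\leq |x-y|<2^{k+1}r\}$ combined with H\"older's inequality on each annulus (using $|B_{2^{k+1}r}|^{1/p'}\sim (2^{k+1}r)^{n/p'}$) and a further application of \ref{itm:NKSb} should yield
\[
\int_{B_r(x)^c} \frac{\tilde{\omega}(|x-y|)}{|x-y|^n}|u(y)|\,dy \leq C\,\|u\|_{L^p(\R^n)}\int_r^{\infty}\frac{\tilde{\omega}(t)}{t^{1+n/p}}\,dt.
\]
Summing the two bounds and choosing $r>0$ so that $r^{n/p}Mu(x) = \|u\|_{L^p(\R^n)}$, assumption \ref{itm:NKSc} collapses the sum to the Hedberg-type pointwise estimate
\[
I_{\tilde{\omega}}|u|(x) \leq C\,\|u\|_{L^p(\R^n)}\,A^{-1}\!\left(\left(\frac{Mu(x)}{\|u\|_{L^p(\R^n)}}\right)^{p}\right).
\]

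To conclude, I apply the Young function $A$ to both sides, integrate, and invoke the Hardy-Littlewood maximal theorem (which is where the hypothesis $p>1$ enters):
\[
\int_{\R^n} A\!\left(\frac{I_{\tilde{\omega}}|u|(x)}{C\|u\|_{L^p(\R^n)}}\right)dx \leq \int_{\R^n}\!\left(\frac{Mu(x)}{\|u\|_{L^p(\R^n)}}\right)^{p}dx \leq C'.
\]
By the definition of the Luxemburg norm this is exactly $\|I_{\tilde{\omega}}u\|_{L^{A}(\R^n)}\leq C\|u\|_{L^p(\R^n)}$. The main technical obstacle I anticipate is the careful bookkeeping of the dyadic comparisons supplied by \ref{itm:NKSb}: one needs to check that the possibly shifted intervals $[K_1 2^{\pm k}r, K_2 2^{\pm k}r]$ have bounded overlap as $k$ varies, so that the resulting sums telescope cleanly into $\int_0^{Cr}\tilde{\omega}(t)/t\,dt$ and $\int_{cr}^{\infty}\tilde{\omega}(t)/t^{1+n/p}\,dt$ with constants independent of $r$. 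Once that is settled, the optimization in $r$ and the final maximal-function estimate are routine.
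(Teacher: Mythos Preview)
The paper does not prove this proposition; it is quoted verbatim from \cite[Cor.~3.8\,(i)]{KNS21} for reference, so there is no in-paper argument to compare against. Your Hedberg-type approach is correct and is the standard method behind results of this kind (and presumably close to what \cite{KNS21} does). The bounded-overlap concern you flag is easily dispatched: the intervals $[K_1 2^{\pm k}r, K_2 2^{\pm k}r]$ all have the same length ratio $K_2/K_1$, so each point of $(0,\infty)$ lies in at most $1+\log_2(K_2/K_1)$ of them and the dyadic sums telescope into $\int_0^{Cr}\tilde{\omega}(t)/t\,dt$ and $\int_{cr}^{\infty}\tilde{\omega}(t)/t^{1+n/p}\,dt$ with constants independent of $r$; since \ref{itm:NKSc} holds for all $r>0$, the scale change is harmless. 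The only other point worth tightening is the last step: you obtain $\int_{\R^n} A\bigl(I_{\tilde{\omega}}|u|/(C\|u\|_{L^p})\bigr)\,dx \leq C'$ rather than $\leq 1$; use $A(t/C') \leq A(t)/C'$ for $C'\geq 1$ (convexity together with $A(0)=0$) to absorb $C'$ into the normalizing constant before reading off the Luxemburg norm.
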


With the aid of Theorem \ref{th:NTFC}, we have the following embedding of $H^{\rho,p}_0(\Omega)$ into Orlicz spaces.

\begin{theorem}\label{th:HLS}
Let $\rho$ satisfy \ref{itm:h0}--\ref{itm:h4} and have compact support.
Let $\O$ be bounded.
Assume $p \in (1,\infty)$ satisfies $\gamma p < n$, and $A$ is a Young function such that
\begin{equation}\label{eq:youngident}
\liminf_{t \to \infty} \frac{A^{-1}(t)}{\omega\left(t^{-1/n}\right)t^{1/p}} >0.
\end{equation}
Then, $H^{\rho,p}_0(\Omega)$ is embedded into $L^{A}(\Omega)$ and there is a constant $C=C(\Omega, A, n, \rho)>0$ such that
\begin{equation}\label{eq:Orlicz}
 \norm{u}_{L^{A}(\Omega)} \leq C \norm{D_\rho u}_{L^p(\R^n,\R^n)} \quad \text{for all $u \in H^{\rho,p}_0(\Omega)$}.
\end{equation}
\end{theorem}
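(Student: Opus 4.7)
The strategy is to invert the nonlocal gradient through the fundamental theorem of calculus (Corollary~\ref{cor:ftoc}) and then to recognize the resulting convolution as a generalized Riesz potential of the type studied in \cite{KNS21}. Concretely, I will apply Proposition~\ref{pr:KNS} to $\tilde\omega$, a globally-defined version of $\omega$, with $|V_\rho(z)| \leq C\tilde\omega(|z|)/|z|^n$ on the relevant set, and verify its hypotheses using \ref{itm:h3}--\ref{itm:h4} and the assumption on $A$.

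\textbf{Step 1 (reduction to a potential operator).} By Corollary~\ref{cor:ftoc}, any $u \in H^{\rho,p}_0(\Omega)$ satisfies $u(x) = \int_{\R^n} V_\rho(x-y) \cdot D_\rho u(y)\,dy$ for a.e.\ $x$. Since $\rho$ has compact support, say in $\overline{B_\delta}$, the distributional gradient $D_\rho u$ is supported in $\Omega_\delta$, and for $x \in \Omega$ only values $|x-y| \leq R := \diam\Omega + \delta$ are relevant. By the bound \eqref{eq:Vrhobound}, $|V_\rho(z)| \leq C/(|z|^{2n-1}\overline\rho(|z|)) = C\omega(|z|)/|z|^n$ on $B_\epsilon \setminus \{0\}$, while on $[\epsilon/2, R]$ the function $V_\rho$ is continuous and hence bounded. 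Choose an extension $\tilde\omega : (0,\infty) \to (0,\infty)$ of $\omega|_{(0,\epsilon/2)}$ (for example, $\tilde\omega(t) = \omega(\epsilon/2)$ on $[\epsilon/2,R]$ and then decaying like $t^{-N}$ for a large $N$, so that $\tilde V_\rho := V_\rho \mathbbm{1}_{B_R}$ satisfies $|\tilde V_\rho(z)| \leq C \tilde\omega(|z|)/|z|^n$ on all of $\R^n \setminus\{0\}$). Then for $x \in \Omega$,
\[
|u(x)| \leq \int_{\R^n} \frac{C\,\tilde\omega(|x-y|)}{|x-y|^n} |D_\rho u(y)|\,dy = C\, I_{\tilde\omega}(|D_\rho u|)(x).
\]

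\textbf{Step 2 (verify the hypotheses of Proposition~\ref{pr:KNS}).} Condition (a) follows from $\omega(t) \leq Ct^\sigma$ near zero in \eqref{eq:doubling}. The doubling condition (b) follows from the comparison $\tilde\omega(\lambda t) \leq C\lambda^\gamma \tilde\omega(t)$ on $(0,\epsilon)$ in \eqref{eq:doubling} (and is trivial on the tail where $\tilde\omega$ is monotone). For condition (c), the crucial bound, I will use \eqref{eq:doubling} to estimate: for $r \in (0, \epsilon/2)$, since $\omega(t) \leq C(t/r)^\sigma \omega(r)$ for $t \leq r$,
\[
\frac{1}{r^{n/p}} \int_0^r \frac{\tilde\omega(t)}{t}\,dt \leq C\, \frac{\omega(r)}{r^{n/p}},
\]
and since $\omega(t) \leq C(t/r)^\gamma \omega(r)$ for $r \leq t \leq \epsilon$ together with the tail decay of $\tilde\omega$,
\[
\int_r^\infty \frac{\tilde\omega(t)}{t^{1+n/p}}\,dt \leq C\,\omega(r)\, r^{-\gamma} \int_r^\infty t^{\gamma-1-n/p}\,dt \leq C\, \frac{\omega(r)}{r^{n/p}},
\]
where the convergence at infinity uses $\gamma p < n$. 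The assumption \eqref{eq:youngident}, written as $A^{-1}(1/r^n) \geq c\,\omega(r) r^{-n/p}$ for all sufficiently small $r$, yields (c) for small $r$. For $r$ bounded away from zero, both sides of (c) are bounded and continuous, so the inequality follows after enlarging $C$.

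\textbf{Step 3 (conclusion).} Proposition~\ref{pr:KNS} then gives $\|I_{\tilde\omega}(|D_\rho u|)\|_{L^A(\R^n)} \leq C \|D_\rho u\|_{L^p(\R^n)}$, and restricting to $\Omega$ yields \eqref{eq:Orlicz}. The main obstacle is the verification of condition (c) of Proposition~\ref{pr:KNS}: one must choose the extension $\tilde\omega$ carefully so that both the integrals in (c) remain controlled by $A^{-1}(1/r^n)$ \emph{uniformly} in $r > 0$, not only in the asymptotic regime where \eqref{eq:youngident} applies. The compact support of $\rho$ and the boundedness of $\Omega$ are what allow this localization; the condition $\gamma p < n$ provides the convergence of the tail integral coming from the almost-monotone upper comparison with $t^\gamma$.
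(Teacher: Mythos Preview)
Your approach is essentially the same as the paper's: reduce via Corollary~\ref{cor:ftoc} and the pointwise bound \eqref{eq:Vrhobound} to the potential operator $I_{\tilde\omega}$, then apply Proposition~\ref{pr:KNS}. The verifications of \ref{itm:NKSa} and \ref{itm:NKSb}, and of \ref{itm:NKSc} for small $r$, are correct and match the paper.

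There is, however, a genuine gap in your treatment of \ref{itm:NKSc} for large $r$. You write that ``for $r$ bounded away from zero, both sides of (c) are bounded and continuous, so the inequality follows after enlarging $C$.'' This is not valid: the right-hand side $A^{-1}(1/r^n)$ tends to $0$ as $r\to\infty$, while the hypotheses say nothing about the behaviour of $A$ near~$0$, so you have no control on the rate. The left-hand side is $\sim r^{-n/p}$ (the term $r^{-n/p}\int_0^r \tilde\omega(t)/t\,dt$ dominates, and the integral converges to a finite positive constant), so you would need $A^{-1}(s)\gtrsim s^{1/p}$ for small $s$, which is not assumed.

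The paper closes this gap by exploiting that only $L^A(\Omega)$ with $\Omega$ bounded is needed: one modifies $A$ in a neighbourhood of~$0$ so that \eqref{eq:NKSc} holds for all $r>0$ (concretely, so that $A^{-1}(s)\geq c\,s^{1/p}$ for small $s$), invoking the standard fact that two Young functions equivalent at infinity yield the same Orlicz space on sets of finite measure with equivalent norms (the paper cites \cite[Lemma~4.5]{Ada77} and \cite[Th.~V.1.3]{RaR91}). Once you insert this argument, your proof is complete and coincides with the paper's up to the inessential choice of tail for $\tilde\omega$ (the paper uses $e^{-t}$; any integrable tail works).
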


\begin{proof}
Define $\tilde{\omega}:(0,\infty) \to (0,\infty)$ as
\[
\tilde{\omega}(t)=\begin{cases}
\omega(t) \quad \text{for $t \in (0,\epsilon)$,} \\
e^{-t} \quad \text{for $t \in [\epsilon,\infty)$.} 
\end{cases}
\]
Bound \eqref{eq:Vrhobound} and the fact that $V_{\rho}$ is locally bounded away from the origin allows us to establish the estimate
\begin{equation}\label{eq:Vomega}
 V_{\rho} (x) \leq C \frac{\tilde{\omega} (\abs{x})}{\abs{x}^n}, \qquad x \in B_{\diam \O + \d} \setminus \{ 0 \},
\end{equation}
for a suitable constant $C>0$ and $\d>0$ given by $\supp \rho = \overline{B}_\d$.

Let $u \in H^{\rho,p}_0(\O)$.
By Corollary~\ref{cor:ftoc} and \eqref{eq:Vomega} we can estimate for a.e.~$x \in \Omega$
\begin{align*}
\abs{u(x)} \leq \int_{\Omega_\d} \abs{V_\d(x-y)} \abs{D_\rho u(y)}\,dy\leq C \int_{\R^n} \frac{\tilde{\omega}(\abs{x-y})}{\abs{x-y}^n}\abs{D_\rho u(y)}\,dy = C I_{\tilde{\omega}} (\abs{D_\rho u})(x).
\end{align*}
On the other hand, the bound $\abs{u} \leq C I_{\tilde{\omega}} (\abs{D_\rho u})$ is obvious in $\O^c$. Therefore, it suffices to show that the operator $I_{\tilde{\omega}}$ is bounded from $L^p(\R^n)$ into $L^A(\Omega)$, since we then find
\[
 \norm{u}_{L^{A}(\Omega)} \leq C \norm{I_{\tilde{\omega}} (\abs{D_\rho u})}_{L^{A}(\Omega)} \leq C \norm{\abs{D_\rho u}}_{L^p(\R^n)} = C \norm{D_\rho u}_{L^p(\R^n, \R^n)}.
\]

For the boundedness of $I_{\tilde{\omega}}$ from $L^p(\R^n)$ into $L^A(\Omega)$, we shall check \ref{itm:NKSa}--\ref{itm:NKSc} of Proposition \ref{pr:KNS}.
Condition \ref{itm:NKSa} holds thanks to \eqref{eq:doubling}, since
\[
 \int_0^{\e} \frac{\tilde{\omega} (t)}{t} \, dt \leq C \int_0^{\e} \frac{1}{t^{1-\sigma}} \, dt < \infty .
\]
As for \ref{itm:NKSb}, we calculate, for $r<\epsilon/2$,
\[
\int_{r/2}^{2r} \frac{\tilde{\omega}(t)}{t}\,dt \geq C\sup_{r \leq t \leq 2r}\tilde{\omega}(t) \int_{r/2}^{2r} \frac{1}{t}\,dt=C\log(4) \sup_{r \leq t \leq 2r}\tilde{\omega}(t),
\]
where we have used the second property in \eqref{eq:doubling}, while for $r>2\epsilon$
\[
\int_{r/2}^{2r} \frac{\tilde{\omega}(t)}{t}\,dt \geq \frac{1}{2r}\int_{r/2}^{2r} e^{-t}\,dt= \frac{e^{r/2}-e^{-r}}{2r} e^{-r} \geq C\sup_{r \leq t \leq 2r}\tilde{\omega}(t).
\]
In closing, the bound
\[
 \int_{r/2}^{2r} \frac{\tilde{\omega}(t)}{t}\,dt \geq C\sup_{r \leq t \leq 2r}\tilde{\omega}(t) , \qquad \frac{\e}{2} \leq r \leq 2 \e 
\]
holds trivially as $\tilde{\omega}$ is a piecewise smooth positive function.
Thus, condition \ref{itm:NKSb} is proved.

Finally, we check \ref{itm:NKSc}. 
In fact, since we are only interested in the embedding into $L^{A}(\Omega)$ for $\Omega$ bounded, it suffices to verify  inequality \eqref{eq:NKSc} for $r< r_0$ for some $r_0 < \epsilon$.
Indeed, for $r\geq r_0$ the left-hand side of \eqref{eq:NKSc} is bounded by a constant times $r^{-n/p}$, so that we can change $A$ around zero such that that the inequality is satisfied everywhere (cf.~\cite[Lemma~4.5]{Ada77}), which leads to an equivalent Orlicz space \cite[Th.\ V.1.3]{RaR91}.
For $r< r_0$ we compute
\begin{equation*}\label{eq:ineqOrlicz}
 \frac{1}{r^{n/p}} \int_{0}^r \frac{\tilde{\omega}(t)}{t}\,dt+\int_r^{\infty} \frac{\tilde{\omega}(t)}{t^{1+n/p}}\,dt
 \leq C\frac{\omega(r)}{r^{n/p+\sigma}}\int_0^r\frac{1}{t^{1-\sigma}}\,dt + C\frac{\omega(r)}{r^{\gamma}}\int_r^{\infty} \frac{1}{t^{1-\gamma+n/p}}\,dt = \frac{C \omega(r)}{r^{n/p}},
\end{equation*}
where we have used that $\tilde{\omega}(t)/t^\sigma$ is almost increasing on $(0, r_0)$, whereas $\tilde{\omega}(t)/t^{\gamma}$ is almost decreasing on $(r,\infty)$; note that the last inequality also uses that $\gamma < n/p$. All in all, this shows that it is sufficient to have
\[
\frac{\omega(r)}{r^{n/p}} \leq C A^{-1}(1/r^n),
\]
for all $r< r_0$.
The validity of such an inequality for some $r_0>0$ is a consequence of \eqref{eq:youngident}.
\end{proof}


\begin{example}
We consider the following applications of Theorem~\ref{th:HLS}:
\begin{enumerate}[label=(\alph*)]
\item\label{item:Orlicza}
Let $\rho$ satisfy \ref{itm:h0}--\ref{itm:h4} and have compact support.
Assume $p, q \in (1,\infty)$ satisfy $\gamma p < n$, and
\begin{equation}\label{eq:qpn}
 \liminf_{t \to 0} t^{1 - \frac{1}{q} + \frac{1}{p}- \frac{1}{n}} \overline{\rho} (t)^{\frac{1}{n}} >0 .
\end{equation}
By Theorem \ref{th:HLS}, $H^{\rho,p}_0(\Omega)$ is embedded into $L^q (\Omega)$.
The assumptions are satisfied for the kernel \eqref{eq:truncatedRiesz} of Example \ref{ex:H4} \ref{itm:exH4a}.
In this case, $q = \frac{n p}{n - s p}$ and $\sigma = \gamma = s$.
Thus, the embedding \cite[Th.\ 6.1]{BeCuMo22} is recovered.

\item
Let $\rho$ satisfy \ref{itm:h0}--\ref{itm:h4} and have compact support.
Assume $p \in (1,\infty)$ satisfies $\gamma p < n$.
Using \ref{itm:h3}, we find that \eqref{eq:qpn} holds with $q = \frac{n p}{n - \sigma p}$.
Therefore, by \ref{item:Orlicza}, $H^{\rho,p}_0(\Omega)$ is embedded into $L^q (\Omega)$.

\item
Consider the kernel \eqref{eq:truncatedlog} of Example \ref{ex:H4} \ref{itm:exH4b}.
Assume, in addition, that $p > 1$ with $sp<n$.
Then, we may pick any $\gamma>s$ such that $\gamma p <n$.
Theorem~\ref{th:HLS} shows that $H^{\rho,p}_0(\Omega)$ embeds into the Orlicz space $L^{A}(\Omega)$, where
\[
A(t) = (t\textrm{lm}(t))^{p_*},
\]
$p_*=\frac{np}{n-sp}$ and $\textrm{lm}(t)$ is the modified logarithm function
\[
\textrm{lm}(t) = \begin{cases}
1/(1 - \log t) \quad &\text{for $t \leq 1$},\\
1+\log t \quad \quad&\text{for $t >1$}.
\end{cases}
\]
Indeed, checking that $A$ is a Young function is a routine calculation.
The inverse of $A$ for $t \geq 1$ is given by
\[
 A^{-1}(t) = \frac{t^{1/p_*}}{W(e t^{1/p_*})} ,
\]
with $W$ the Lambert $W$ function (the inverse function of $r\mapsto r e^r$ for $r\geq 0$).
Since $W$ behaves like $\log$ at infinity (i.e., $W(t)/\log t \to 1$ as $t \to \infty$), we can see that  
\[
A^{-1}(t) \geq C \frac{t^{1/p_*}}{\log t} \qquad \text{for $t$ large}.
\]
On the other hand, the corresponding $\omega$ satisfies
\[
 \omega (t) = \frac{t^s}{\bar{\chi} (t) (- \log t)} \leq C \frac{t^s}{-\log t} , \qquad t \in (0, \min\{ \e, 1 \} ) .
\]
The two inequalities above imply \eqref{eq:youngident} at once, so Theorem \ref{th:HLS} concludes the embedding of $H^{\rho,p}_0(\Omega)$ into $L^{A}(\Omega)$ and the validity of \eqref{eq:Orlicz}.
\end{enumerate}
\end{example}

\subsection{Embeddings into spaces of continuous functions}

The opposite case to that of Theorem \ref{th:HLS} that we present is not $\gamma p \geq n$, as one would desire, but $\sigma p>n$.
In this case, we actually have embeddings into spaces of continuous functions.
To show this, we first prove the following estimates for the integrals of $V_\rho$.
 
\begin{lemma}\label{le:intVbound}
Let $\rho$ have compact support and satisfy \ref{itm:h0}--\ref{itm:h4}, $p \in (1,\infty]$ with $\sigma p>n$ and $R>0$. Then, there is a constant $C=C(n,p,\rho,R)>0$ such that:
\begin{enumerate}[label= (\roman*)]
\item\label{itm:intVbound1}
For all $r\in (0,\epsilon)$ and $\abs{\zeta} \leq r/2$,
\[
\norm{V_\rho}_{L^{p'}(B_r)} \leq C \omega(r)r^{-n/p} \quad \text{and} \quad \norm{V_\rho-V_\rho(\cdot + \zeta)}_{L^{p'}(B_R\setminus B_r)} \leq C\omega(r)r^{-n/p}.
\]

\item\label{itm:intVbound2}
For $\abs{\zeta} \leq \epsilon/3$,
\[
 \norm{V_\rho-V_\rho(\cdot + \zeta)}_{L^{p'}(B_R)} \leq  C\omega(\abs{\zeta}) \abs{\zeta}^{-n/p}.
\]
\end{enumerate}
\end{lemma}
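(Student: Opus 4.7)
My approach will rest on the pointwise estimates from Theorem~\ref{th:NTFC}, namely $|V_\rho(x)| \leq C\omega(|x|)/|x|^n$ and $|\nabla V_\rho(x)| \leq C\omega(|x|)/|x|^{n+1}$ for $x \in B_\epsilon\setminus\{0\}$, combined with the scaling relations in \eqref{eq:doubling}. Since $V_\rho$ is smooth away from the origin, whatever happens on $B_R \setminus B_\epsilon$ in the case $R > \epsilon$ can be controlled by crude bounds and absorbed into the main estimate.

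For the first inequality of \ref{itm:intVbound1}, I would pass to polar coordinates and apply $\omega(t) \leq C(t/r)^\sigma \omega(r)$ for $t \in (0,r]$, which is a consequence of the lower scaling in \eqref{eq:doubling}. This gives
\[
\int_{B_r} |V_\rho|^{p'}\,dx \leq C\omega(r)^{p'} r^{-\sigma p'} \int_0^r t^{\sigma p' + n - 1 - np'}\,dt \leq C\omega(r)^{p'} r^{n-np'},
\]
where the integral converges precisely because $\sigma p > n$; taking $p'$-th roots gives the desired bound. For the second inequality of \ref{itm:intVbound1}, I will invoke the mean value theorem: for $|x| \geq r$ and $|\zeta| \leq r/2$ the segment $[x,x+\zeta]$ stays in $\R^n \setminus B_{r/2}$, so $|V_\rho(x)-V_\rho(x+\zeta)| \leq |\zeta|\sup|\nabla V_\rho|$ along it. Splitting the target domain into $B_\epsilon\setminus B_r$ and (if non-empty) $B_R\setminus B_\epsilon$, on the inner annulus I combine the gradient bound with $\omega(t)\leq C(t/r)^\gamma \omega(r)$ for $t \geq r$ (upper scaling in \eqref{eq:doubling}), producing an integrand behaving like $t^{(\gamma-n-1)p' + n-1}$; since $\gamma < 1$ the exponent is negative independently of $p'$, so the integral is controlled by its $r$-endpoint, yielding $C|\zeta|^{p'}\omega(r)^{p'}r^{-(n+1)p'+n}$, which in view of $|\zeta|\leq r/2$ simplifies to $C\omega(r)^{p'}r^{n-np'}$. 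On the outer annulus $\nabla V_\rho$ is uniformly bounded on $\overline{B_R}\setminus B_\epsilon$, contributing only $C|\zeta|^{p'} \leq Cr^{p'}$, which is absorbed into the target bound via $\omega(r)\geq C^{-1}r^\gamma$ and $\gamma<1$.

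For \ref{itm:intVbound2}, I will split $B_R = B_{2|\zeta|} \cup (B_R\setminus B_{2|\zeta|})$. On the outer piece the second inequality of \ref{itm:intVbound1} applies with $r=2|\zeta|$ (valid since $|\zeta|\leq \epsilon/3$ gives $2|\zeta|<\epsilon$, while $|\zeta| = r/2$ fits the hypothesis), producing $C\omega(2|\zeta|)(2|\zeta|)^{-n/p} \leq C\omega(|\zeta|)|\zeta|^{-n/p}$ after using the doubling of $\omega$. On $B_{2|\zeta|}$ the triangle inequality together with translation invariance of Lebesgue measure reduces the estimate to $\|V_\rho\|_{L^{p'}(B_{2|\zeta|})} + \|V_\rho\|_{L^{p'}(B_{3|\zeta|})}$, and both terms are controlled by the first inequality of \ref{itm:intVbound1} followed once more by the doubling property of $\omega$.

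The main obstacle is the careful bookkeeping of exponents to ensure that the integrals give exactly the expected scaling in $r$ (or $|\zeta|$). The assumption $\sigma p > n$ is used only to make the integral in the first step converge at $0$ and determines the final decay rate, while $\gamma < 1$ alone suffices to handle all gradient-based estimates without any further restriction on $p$.
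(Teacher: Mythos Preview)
Your proposal is correct and follows essentially the same strategy as the paper: the first bound via the pointwise estimate on $V_\rho$ together with the lower scaling $\omega(t)\le C(t/r)^\sigma\omega(r)$, the second via a mean-value/gradient estimate combined with the upper scaling $\omega(t)\le C(t/r)^\gamma\omega(r)$ and the crude Lipschitz bound away from the origin, and part \ref{itm:intVbound2} by splitting at radius $2|\zeta|$ and invoking part \ref{itm:intVbound1}. The only cosmetic differences are that the paper splits the outer region at $\epsilon/2$ rather than $\epsilon$ (so the segment $[x,x+\zeta]$ stays inside $B_\epsilon$ where \eqref{eq:Vrhobound} is stated) and uses the integral form of the mean-value theorem followed by Jensen--Fubini instead of a sup along the segment; your version works because $|x+t\zeta|$ is comparable to $|x|$ when $|\zeta|\le|x|/2$, which you should make explicit.
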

\begin{proof}
Part \ref{itm:intVbound1}.
For the first bound we compute using \eqref{eq:Vrhobound}
\begin{equation*}
\norm{V_\rho}_{L^{p'}(B_r)} \leq C \left( \int_{B_r} \left(\frac{\omega(\abs{x})}{\abs{x}^n}\right)^{p'} dx \right)^{1/p'}
\leq \frac{C\omega(r)}{r^{\sigma}} \left( \int_{B_r} \frac{1}{\abs{x}^{(n-\sigma )p'}}\,dx \right)^{1/p'} = C\omega(r) r^{-n/p},
\end{equation*}
where the second inequality uses that $\omega(r)/r^\sigma$ is almost increasing by \ref{itm:h3}.

For the second inequality of the statement, we first note that we may restrict to integration over $B_{\epsilon/2} \setminus B_r$, since $V_\rho$ is Lipschitz continuous on $B_R \setminus B_{\epsilon/2}$; indeed, we have
\[
 \norm{V_\rho-V_\rho(\cdot + \zeta)}_{L^{p'}(B_R\setminus B_{\epsilon/2})} \leq C\abs{\zeta}\norm{\mathbbm{1}_{B_R}}_{L^{p'} (\Rn)} \leq C R^{\frac{n}{p'}} r \leq C r^{\gamma - n/p} \leq C \omega(r) r^{-n/p} ,
\]
where the third inequality holds because $\gamma-\frac{n}{p} < 1$, and the last one is due to \eqref{eq:doubling}.
As for the integration in $B_{\epsilon/2}\setminus B_{r}$, we first compute for $x \in B_{\epsilon/2}\setminus B_{r}$,
\[
 \abs{V_\rho(x)-V_\rho(x+\zeta)} \leq \abs{\zeta} \int_0^1 \abs{\nabla V_\rho(x+t\zeta)} \,dt 
\leq r \int_0^1 \frac{\omega(\abs{x+t\zeta})}{\abs{x+t\zeta}^{n+1}} \, dt ,
\]
by the fundamental theorem of calculus and \eqref{eq:Vrhobound}.
Therefore,
\[
 \norm{V_\rho-V_\rho(\cdot + \zeta)}_{L^{p'} (B_{\epsilon/2} \setminus B_r)} \leq r \left( \int_{B_{\epsilon/2} \setminus B_r} \left( \int_0^1 \frac{\omega(\abs{x+t\zeta})}{\abs{x+t\zeta}^{n+1}} \, dt \right)^{p'} dx \right)^{\frac{1}{p'}}
\]
and, by Jensen's inequality, Fubini's theorem, and the fact that $x+t\zeta \in B_\epsilon  \setminus B_{r/2}$ for each $ x \in B_{\epsilon/2} \setminus B_r$ and $\zeta \in B_{r/2}$,
\[
 \int_{B_{\epsilon/2} \setminus B_r} \left( \int_0^1 \frac{\omega(\abs{x+t\zeta})}{\abs{x+t\zeta}^{n+1}} \, dt \right)^{p'} dx \leq \int_0^1 \int_{B_{\epsilon/2} \setminus B_r} \left( \frac{\omega(\abs{x+t\zeta})}{\abs{x+t\zeta}^{n+1}} \right)^{p'} dx\, dt \leq \int_{B_\epsilon  \setminus B_{r/2}}  \left(\frac{\omega(\abs{x})}{\abs{x}^{n+1}}\right)^{p'} dx .
\]
Now, by the last inequality of \eqref{eq:doubling}, for all $x \in B_\epsilon \setminus B_{r/2}$,
\[
 \omega(\abs{x}) \leq C \left( \frac{\abs{x}}{r}\right)^{\gamma} \omega (r) ,
\]
so
\[
 \left( \int_{B_\epsilon \setminus B_{r/2}} \left(\frac{\omega(\abs{x})}{\abs{x}^{n+1}}\right)^{p'} dx \right)^{\frac{1}{p'}} \leq C \frac{\omega (r)}{r^{\gamma}} \left( \int_{B_{r/2}^c} \frac{1}{\abs{x}^{(n+1- \gamma) p'}} dx \right)^{\frac{1}{p'}} = C \omega (r) r^{-\frac{n}{p} - 1},
\]
since $(n+1- \gamma) p' > n p' \geq n$.
Altogether,
\[
 \norm{V_\rho-V_\rho(\cdot + \zeta)}_{L^{p'} (B_{\epsilon/2} \setminus B_r)} \leq C \omega (r) r^{-\frac{n}{p}} 
\]
and \ref{itm:intVbound1} is proved. \smallskip

Part \ref{itm:intVbound2}. We have
\[
 \norm{V_\rho-V_\rho(\cdot + \zeta)}_{L^{p'}(B_R)} \leq \norm{V_\rho}_{L^{p'}(B_{2 |\zeta|})} + \norm{V_\rho(\cdot + \zeta)}_{L^{p'}(B_{2 |\zeta|})}+ \norm{V_\rho-V_\rho(\cdot + \zeta)}_{L^{p'}(B_R \setminus B_{2 |\zeta|})} ,
\]
and each of the terms of the right-hand side can be estimated by part \ref{itm:intVbound1} and \eqref{eq:doubling} as follows:
\[
 \norm{V_\rho}_{L^{p'}(B_{2 |\zeta|})} + \norm{V_\rho-V_\rho(\cdot + \zeta)}_{L^{p'}(B_R \setminus B_{2 |\zeta|})} \leq 2 C \omega(2\abs{\zeta}) \abs{2\zeta}^{-n/p} \leq C \omega(\abs{\zeta}) \abs{\zeta}^{-n/p} 
\]
and
\[
 \norm{V_\rho(\cdot + \zeta)}_{L^{p'}(B_{2 |\zeta|})} \leq \norm{V_\rho}_{L^{p'}(B_{3 |\zeta|})} \leq C \omega(3\abs{\zeta}) \abs{3\zeta}^{-n/p} \leq C \omega(\abs{\zeta}) \abs{\zeta}^{-n/p} ,
\]
which completes the proof.
\end{proof}

The next step is to show an embedding into spaces of continuous functions.
We first make some observations and introduce the notation of the spaces of functions with a prescribed modulus of continuity.
For $\alpha \in [0, \sigma)$ we define the function $\omega_{\alpha} : [0, \infty) \to [0, \infty)$ as
\[
 \omega_{\alpha} (t) = \begin{cases}
 0 & \text{if } t = 0 , \\
 \omega (t) t^{-\alpha} & \text{if } t \in (0, \e) , \\
 \omega (\e) \e^{-\alpha} & \text{if } t \in [\e, \infty) .
 \end{cases}
 \]
Thanks to \eqref{eq:doubling}, $\omega_{\alpha}$ is continuous at $0$.
In fact, by \ref{itm:h2}, it is continuous, and by \ref{itm:h0}, it only vanishes at $0$.
Given $U \subset \Rn$, we define the space $C^{\omega_{\alpha}} (U)$ as the set of bounded functions $u : U \to \R$ such that there exists $C>0$ for which
\[
 \abs{u(x) - u(y)} \leq C \omega_{\alpha} (|x-y|) , \qquad x, y \in U,
\]
equipped with the seminorm
\[
 [u]_{C^{\omega_{\alpha}} (\O)} = \sup_{\substack{x, y \in U \\ x \neq y}} \frac{\abs{u(x) - u(y)}}{\omega_{\alpha} (|x-y|)}
\]
and the norm
\[
 \norm{u}_{C^{\omega_{\alpha}} (U)} = \norm{u}_{L^{\infty} (U)} + [u]_{C^{\omega_{\alpha}} (U)} .
\]
A standard argument shows that $C^{\omega_{\alpha}} (U)$ is a Banach space.
Moreover, as a consequence of \eqref{eq:doubling}, $C^{\omega_{\alpha}} (U)$ is embedded in the space $C^{0, \sigma - \alpha} (U)$ of bounded, H\"older continuous functions of exponent $\sigma - \alpha$.

With this language we prove, as a consequence of Lemma \ref{le:intVbound}, the following type of Morrey inequality.

\begin{theorem}\label{th:Morrey}
Let $\rho$ have compact support and satisfy \ref{itm:h0}--\ref{itm:h4}, $p \in (1,\infty]$ with $\sigma p>n$ and $\Omega \subset \R^n$ be open and bounded.
Let $\alpha = n/p$.
Then, any function in $H^{\rho, p}_0 (\O)$ admits a representative that is in $C^{\omega_{\alpha}} (\Rn)$.
Moreover, there exists a constant $C=C(n,p,\rho,\Omega)>0$ such that for all continuous $u \in H^{\rho, p}_0 (\O)$,
\begin{equation}\label{eq:Morrey}
 \norm{u}_{C^{\omega_{\alpha}} (\Rn)} \leq C \norm{D_\rho u}_{L^p(\R^n,\R^n)}.
\end{equation}
\end{theorem}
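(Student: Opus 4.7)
The strategy is dictated by the nonlocal fundamental theorem of calculus: by Corollary~\ref{cor:ftoc}, any $u \in H^{\rho,p}_0(\Omega)$ satisfies
\[
u(x) = \int_{\R^n} V_\rho(x-y) \cdot D_\rho u(y)\,dy \quad \text{for a.e.~}x \in \R^n.
\]
Since $\rho$ has compact support $\overline{B_\delta}$ and $u$ vanishes outside $\Omega$, the field $D_\rho u$ is supported in the bounded set $\Omega_\delta$, so the right-hand side of this formula is well defined for every $x \in \R^n$ as a Lebesgue integral. I will denote this pointwise-defined function by $\tilde u$, prove that it belongs to $C^{\omega_\alpha}(\R^n)$ with the claimed estimate, and conclude that $\tilde u$ is the desired continuous representative.

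For the $L^\infty$ bound, fix $R>0$ such that $\Omega_\delta \subset B_R$. For $x \notin B_{2R}$ we have $\tilde u(x)=0$. For $x \in B_{2R}$, $\Omega_\delta - x \subset B_{3R}$, so H\"older's inequality gives
\[
|\tilde u(x)| \leq \norm{V_\rho}_{L^{p'}(B_{3R})}\norm{D_\rho u}_{L^p(\R^n,\R^n)}.
\]
Lemma~\ref{le:intVbound}\,\ref{itm:intVbound1} (applied with any $r \in (0,\epsilon)$ to bound $V_\rho$ near the origin, combined with the fact that $V_\rho$ is smooth on $B_{3R}\setminus B_r$) shows that the $L^{p'}$-norm is finite. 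For the modulus of continuity, let $x, x+\zeta \in \R^n$ and write
\[
|\tilde u(x)-\tilde u(x+\zeta)| \leq \norm{V_\rho - V_\rho(\cdot + \zeta)}_{L^{p'}(B_{R'})}\norm{D_\rho u}_{L^p(\R^n,\R^n)},
\]
where $R'$ is chosen so that $\Omega_\delta-x \cup \Omega_\delta-(x+\zeta) \subset B_{R'}$. For $|\zeta| \leq \epsilon/3$, Lemma~\ref{le:intVbound}\,\ref{itm:intVbound2} yields directly
\[
\norm{V_\rho - V_\rho(\cdot+\zeta)}_{L^{p'}(B_{R'})} \leq C \omega(|\zeta|)|\zeta|^{-n/p} = C\omega_\alpha(|\zeta|).
\]
For $|\zeta| > \epsilon/3$ one falls back on the $L^\infty$ estimate, noting that $\omega_\alpha(|\zeta|)$ is bounded below by a positive constant on $[\epsilon/3,\infty)$, so both halves together give $[\tilde u]_{C^{\omega_\alpha}(\R^n)} \leq C \norm{D_\rho u}_{L^p(\R^n,\R^n)}$.

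To upgrade this to the statement about representatives, note that $\omega_\alpha$ is continuous at zero by \eqref{eq:doubling}, so the estimate above forces $\tilde u$ to be continuous on $\R^n$; since $\tilde u = u$ a.e.\ by Corollary~\ref{cor:ftoc}, $\tilde u$ is the sought representative, and \eqref{eq:Morrey} follows. The case $p=\infty$ (so $\alpha=0$ and $p'=1$) is covered by the same argument, using Lemma~\ref{le:intVbound} with $p'=1$, $(n-\sigma)p' = n-\sigma < n$ still granting the local integrability of $V_\rho$. The only delicate point, which is essentially settled by the two parts of Lemma~\ref{le:intVbound}, is the sharp tracking of how the integrability of $V_\rho$ and its translation differences degenerate at the origin; the rest of the argument is a clean application of H\"older's inequality and the convolution representation.
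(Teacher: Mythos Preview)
Your approach is essentially the paper's, but there is one justification gap. You assert ``For $x \notin B_{2R}$ we have $\tilde u(x)=0$,'' yet $V_\rho$ does not have compact support, so the convolution $\tilde u = V_\rho * D_\rho u$ is not obviously zero outside any fixed ball. The claim is true, but it needs an argument: since $V_\rho \in C^{\infty}(\R^n\setminus\{0\},\R^n)$ (Theorem~\ref{th:NTFC}), the integral defining $\tilde u$ is continuous on $(\overline{\Omega_\delta})^c$; combined with the a.e.\ identity $\tilde u = u = 0$ on $\Omega^c$ from Corollary~\ref{cor:ftoc}, this forces $\tilde u \equiv 0$ there. Without this, your radius $R'$ depends on $x$, and since the constant in Lemma~\ref{le:intVbound}\,\ref{itm:intVbound2} depends on the radius, the modulus-of-continuity estimate would not be uniform.

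The paper avoids this issue by working directly with $u$ rather than with a pointwise-defined $\tilde u$: since $u=0$ a.e.\ on $\Omega^c$, the estimate $|u(x)-u(z)| \leq C\omega_\alpha(|x-z|)\norm{D_\rho u}_{L^p}$ is trivially true when both $x,z \in \Omega^c$, and otherwise one may assume $x \in \Omega \subset B_R$, whence the relevant integration domain sits inside $B_{2R}$ with $R$ fixed. The $L^\infty$ bound is then obtained at the end (after passing to the continuous representative and using that it vanishes on $\Omega^c$) by taking $z \in \Omega^c$ in the seminorm estimate, rather than via a separate H\"older bound on $\norm{V_\rho}_{L^{p'}(B_{3R})}$ as you do. Both routes give \eqref{eq:Morrey}; yours just needs the missing line of explanation.
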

\begin{proof}
Let $u \in H^{\rho,p}_0(\O)$ and $R>0$ be such that $\Omega_{\d} \subset B_R$, where $\d>0$ is such that $\supp \rho =\overline{B}_\d$. Then, by Corollary~\ref{cor:ftoc} and Lemma \ref{le:intVbound} \ref{itm:intVbound2}, we find for a.e.~$x, z \in \Rn$ with $r:=\abs{x-z}<\eps/3$ that
\begin{equation}\label{eq:Morreyproof}
\begin{split}
\abs{u(x)-u(z)} &\leq \int_{B_R} \abs{V_\rho(x-y)-V_\rho(z-y)}\abs{D_\rho u(y)}\,dy \\
& \leq \norm{V_\rho - V_\rho(\cdot + x-z)}_{L^{p'} (B_{2R}; \Rn)} \norm{D_\rho u}_{L^p (\Rn,\Rn)} 
 \leq C \omega_{\alpha} (r) \norm{D_\rho u}_{L^p (\Rn, \Rn)}.
\end{split}
\end{equation}
In particular, there is a continuous representative $\bar{u} \in C(\Rn)$ of $u$, which also satisfies \eqref{eq:Morreyproof}. Since $\bar{u}=0$ in $\O^c$, we even find that
\[
\abs{\bar{u}(x)-\bar{u}(z)} \leq C \omega_{\alpha}(|x-z|) \norm{D_\rho \bar{u}}_{L^p (\R^n, \Rn)} , \qquad \text{for all $x, z \in \Rn$}.
\]
Taking $z \in \O^c$, also yields
\[
 \abs{\bar{u}(x)} \leq C \norm{D_\rho \bar{u}}_{L^p (\R^n, \Rn)} , \qquad \text{for all $x\in \Rn$},
\]
which finishes the proof.
\end{proof}

\begin{example}
Let $p \in (1,\infty]$ and $\Omega \subset \R^n$ be open and bounded.
Then, we have the inequality \eqref{eq:Morrey} for the following modulus of continuity $\omega_{\alpha}$ defined for $t \in (0, \e)$:

\begin{enumerate}[label=(\alph*)]
\item For $\rho$ given by \eqref{eq:truncatedRiesz}, as in Example \ref{ex:H4} \ref{itm:exH4a}, we obtain for $s p > n$,
\[
 \omega_{\alpha} (t) = t^{ s - \frac{n}{p}} .
\]
This is a generalization of \cite[Th.\ 6.3]{BeCuMo22}.

\item For $\rho$ given by \eqref{eq:truncatedlog}, as in Example \ref{ex:H4} \ref{itm:exH4b}, we obtain for $s p > n$,
\[
 \omega_{\alpha} (t) = \frac{t^{ s - \frac{n}{p}}}{-\log t} .
\]

\item For $\rho$ given by \eqref{eq:truncatedlogdown}, as in Example \ref{ex:H4} \ref{itm:exH4c}, we obtain for $s p > n$,
\[
 \omega_{\alpha} (t) = t^{ s - \frac{n}{p}} (-\log t) .
\]

\end{enumerate}
\end{example}

\subsection{Compact embeddings}

An immediate consequence of Theorem \ref{th:Morrey} is the compact inclusion of $H^{\rho, p}_0 (\O)$ into $L^p (\Rn)$ in the regime $\sigma p > n$, since the space of $C^{\omega_{\alpha}} (\Rn)$ functions vanishing in $\O^c$ is compactly embedded into $L^p (\O)$.
Thus, we recover part of the conclusion of Theorem \ref{th:lpbound}, but with stronger assumptions.
Here, we present an approach to prove compactness from $H^{\rho, p}_0 (\O)$ into $L^p (\Rn)$ for any $p \in [1, \infty]$.

A similar proof to the Morrey inequality of Theorem \ref{th:Morrey}, using Lemma~\ref{le:intVbound} as well, yields the following bounds on translations.

\begin{proposition}\label{pr:translations}
Let $\rho$ have compact support and satisfy \ref{itm:h0}--\ref{itm:h4}.
Let $p \in [1,\infty]$ and let $\Omega \subset \R^n$ be open and bounded.
Then, there exists a constant $C=C(n, p, \rho, \Omega)>0$ such that for all $\abs{\zeta} < \epsilon/3$,
\[
\norm{u-u(\cdot +\zeta)}_{L^p(\R^n)} \leq C \omega(\abs{\zeta})\norm{D_\rho u}_{L^p(\R^n,\R^n)} \quad \text{for all }  H^{\rho, p}_0 (\O) .
\]
\end{proposition}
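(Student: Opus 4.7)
The plan is to combine the nonlocal fundamental theorem of calculus (Corollary~\ref{cor:ftoc}) with the convolution-type bound from Lemma~\ref{le:intVbound}\,\ref{itm:intVbound2} evaluated at the endpoint $p=\infty$ (so that $p'=1$). Applying Corollary~\ref{cor:ftoc} to $u\in H^{\rho,p}_0(\O)$ and subtracting the analogous representation at $x+\zeta$ yields
\[
u(x) - u(x+\zeta) = \int_{\R^n}\bigl[V_\rho(x-y) - V_\rho(x+\zeta-y)\bigr]\cdot D_\rho u(y)\,dy \quad \text{for a.e.\ } x\in\R^n.
\]
Since $u$ is supported in $\overline{\O}$ and $D_\rho u$ in $\overline{\O_\delta}$ (where $\supp\rho = \overline{B_\delta}$), both the left-hand side and the integrand are supported inside fixed bounded sets; in particular, only the values of $K_\zeta(z) := V_\rho(z) - V_\rho(z+\zeta)$ on some ball $B_{R_0}$, with $R_0$ depending only on $\O$, $\delta$ and $\epsilon$, enter the identity.

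Next, I would recast the above as the convolution $u-u(\cdot+\zeta) = (\mathbbm{1}_{B_{R_0}}K_\zeta) * D_\rho u$, where the truncation is harmless by the support considerations, and apply Young's convolution inequality to obtain, for every $p\in[1,\infty]$,
\[
\|u - u(\cdot+\zeta)\|_{L^p(\R^n)} \leq \|K_\zeta\|_{L^1(B_{R_0})}\,\|D_\rho u\|_{L^p(\R^n,\R^n)}.
\]
Lemma~\ref{le:intVbound}\,\ref{itm:intVbound2} applied with $p=\infty$ (so that $p'=1$ and the factor $|\zeta|^{-n/p}$ reduces to $1$) yields directly $\|K_\zeta\|_{L^1(B_{R_0})} \leq C\,\omega(|\zeta|)$, and the two estimates combine to give the claim.

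The only subtle point is the convolution bookkeeping: since $V_\rho$ is only guaranteed to be locally integrable by Theorem~\ref{th:NTFC} (its analytic part coming from the Paley--Wiener part of the decomposition may well not decay at infinity), one cannot apply Young's inequality globally to $V_\rho * D_\rho u$. Truncating $K_\zeta$ to the ball $B_{R_0}$ sidesteps this issue, and is licensed by the fact that the supports of $u$, $u(\cdot+\zeta)$ and $D_\rho u$ lie in fixed compact sets uniformly in $|\zeta|<\epsilon/3$. Once this bookkeeping is settled, the estimate is essentially an immediate consequence of the previous lemma.
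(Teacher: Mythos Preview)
Your proposal is correct and follows essentially the same route as the paper: represent $u(x)-u(x+\zeta)$ via Corollary~\ref{cor:ftoc}, reduce to a convolution against the truncated kernel $K_\zeta$, and bound $\|K_\zeta\|_{L^1(B_{R_0})}$ using Lemma~\ref{le:intVbound}\,\ref{itm:intVbound2} with $p=\infty$. The paper phrases the convolution estimate via Minkowski's integral inequality rather than Young's, but this is the same inequality in this context; your explicit discussion of the truncation needed because $V_\rho$ is only locally integrable is a nice clarification of a point the paper handles implicitly through the choice of $R$.
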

\begin{proof}
Let $u \in H^{\rho,p}_0(\O)$ and $R>0$ be such that $\Omega_{\d} \subset B_R$ with $\d>0$ such that $\supp \rho = \overline{B}_\d$.
As in the first inequality of \eqref{eq:Morreyproof}, we find for a.e.~$x \in \O$
\[
\abs{u(x)-u(x + \zeta)} \leq \int_{B_{2R}} \abs{V_\rho(y) - V_\rho(y +  \zeta)}\abs{\Gcal_\rho u(x-y)}\,dy.
\]
Consequently, by Minkowski's integral inequality,
\begin{align*}
 \norm{u-u(\cdot +\zeta)}_{L^p(\R^n)} & \leq \int_{B_{2R}} \abs{V_\rho(y) - V_\rho(y +  \zeta)}\norm{\Gcal_\rho u(\cdot -y)}_{L^p (\Rn)} \,dy \\
 & = \int_{B_{2R}} \abs{V_\rho(y) - V_\rho(y +  \zeta)} \,dy \norm{\Gcal_\rho u}_{L^p (\R^n, \Rn)} \leq C \omega (\abs{\zeta}) \norm{\Gcal_\rho u}_{L^p (\R^n, \Rn)} ,
\end{align*}
where the last inequality is due to Lemma \ref{le:intVbound} \ref{itm:intVbound2}. 
\end{proof}

This result allows for an alternative proof of the compact inclusion from $H^{\rho, p}_0 (\O)$ into $L^p (\Rn)$, via the Fr\'{e}chet-Kolmogorov criterion (when $p < \infty$) or the Ascoli-Arzel\`a theorem (when $p = \infty$), as in \cite[Th.\ 7.3]{BeCuMo22}.
We do not provide a proof since it is standard.
Except for the cases $p = 1, \infty$, the following result requires stronger assumptions than those of Theorem \ref{th:lpbound}.

\begin{corollary}
Let $\rho$ have compact support and satisfy \ref{itm:h0}--\ref{itm:h4}.
Let $p \in [1,\infty]$ and let $\Omega \subset \R^n$ be open and bounded.
Then the inclusion from $H^{\rho, p}_0 (\O)$ into $L^p (\Rn)$ is compact.
\end{corollary}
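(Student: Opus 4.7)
The plan is to separate into the cases $p \in [1, \infty)$ and $p = \infty$, and apply the Fr\'echet-Kolmogorov criterion and the Arzel\`a-Ascoli theorem, respectively. All the hard work has already been done in Proposition \ref{pr:translations} and Theorem \ref{th:Morrey}.

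For $p \in [1, \infty)$, let $F \subset H^{\rho,p}_0(\O)$ be a bounded set, say $\norm{u}_{H^{\rho,p}(\R^n)} \leq M$ for every $u \in F$. I will verify the three hypotheses of the Fr\'echet-Kolmogorov theorem for relative compactness in $L^p(\R^n)$. First, uniform boundedness in $L^p(\R^n)$ is immediate from $\norm{u}_{L^p(\R^n)} \leq M$. Second, uniform tightness: since $\O$ is bounded, there exists $R_0>0$ such that $\O \subset B_{R_0}$, and since $u \equiv 0$ on $\O^c$, we have $\int_{B_R^c} |u|^p \, dx = 0$ for all $R \geq R_0$ and all $u \in F$. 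Third, equicontinuity of translations: by Proposition \ref{pr:translations}, for $|\zeta| < \epsilon/3$,
\[
 \norm{u - u(\cdot+\zeta)}_{L^p(\R^n)} \leq C \omega(|\zeta|) \norm{D_\rho u}_{L^p(\R^n,\R^n)} \leq C M \omega(|\zeta|),
\]
and $\omega(|\zeta|) \to 0$ as $\zeta \to 0$ thanks to the bound $\omega(t) \leq C t^\sigma$ in \eqref{eq:doubling}. Consequently, $F$ is relatively compact in $L^p(\R^n)$.

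For $p = \infty$, the exponent $\alpha = n/p = 0$ is admissible and $\sigma p = \infty > n$, so Theorem \ref{th:Morrey} applies and yields, for each $u \in H^{\rho,\infty}_0(\O)$, a representative $\bar u \in C^{\omega}(\R^n)$ with
\[
 \norm{\bar u}_{C^{\omega}(\R^n)} \leq C \norm{D_\rho u}_{L^\infty(\R^n,\R^n)}.
\]
Take a bounded set $F \subset H^{\rho,\infty}_0(\O)$ with bound $M$. After passing to representatives, $F$ is uniformly bounded in $L^\infty(\R^n)$ and equicontinuous on $\R^n$ (with the common modulus of continuity $C M \omega$). Choosing $R_0>0$ with $\O \subset B_{R_0}$, every $\bar u \in F$ vanishes on $B_{R_0}^c$, so restricting to the compact set $\overline{B_{R_0+1}}$ and applying the Arzel\`a-Ascoli theorem produces a subsequence converging uniformly, hence in $L^\infty(\R^n)$. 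This establishes the claim.

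The main (and only) potential obstacle lies in verifying the equicontinuity conditions: in both cases these reduce to the translation estimate of Proposition \ref{pr:translations}, together with the elementary continuity $\omega(t)\to 0$ at $t=0$, so no further technical work is required.
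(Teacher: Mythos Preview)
Your proposal is correct and follows precisely the approach indicated in the paper, which explicitly states that the result follows from Proposition~\ref{pr:translations} via the Fr\'echet--Kolmogorov criterion for $p<\infty$ and the Ascoli--Arzel\`a theorem for $p=\infty$, without giving further details. Your verification of the three conditions (boundedness, tightness from the zero extension outside $\Omega$, and equicontinuity from the translation estimate with $\omega(t)\to 0$) is exactly what is needed.
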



\section{Inclusion between spaces for different kernels}\label{se:inclusion}

In this section, we compare the nonlocal Sobolev spaces induced by different kernels.
We first prove the following upper bound on $\widehat{Q}_\rho$, complementing Lemma~\ref{le:Qhatpositive}.

\begin{lemma}\label{le:qhatupper}
Let $\rho$ have compact support, be differentiable outside the origin and satisfy \ref{itm:h0}, \ref{itm:h1} and \ref{itm:h4}. Then, there is a $C=C(n, \rho)>0$ such that
\[
\widehat{Q}_\rho(\xi) \leq C \frac{\overline{\rho}(1/\abs{\xi})}{\abs{\xi}^{n}} \quad \text{for all $\xi \in B_{1/\epsilon}^c$}.
\]
\end{lemma}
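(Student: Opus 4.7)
The plan is to parallel the proof of Lemma~\ref{le:Qhatpositive}, starting from the oscillatory-integral formula \eqref{eq:hatQcoarea}
\[
\widehat{Q}_\rho(\xi) = \frac{1}{\pi \abs{\xi}}\int_{\Sn_+} z_1\, I(\abs{\xi} z_1)\,d\mc{H}^{n-1}(z), \qquad I(\theta) := \int_0^\infty f_\rho(r)\sin(2\pi\theta r)\,dr,
\]
and extracting a matching pointwise upper bound on $I(\theta)$ for $\theta > 1/\epsilon$. I would split $I(\theta)$ at $r=1/\theta$: on the inner piece $[0,1/\theta]$ the inequality $\abs{\sin(2\pi\theta r)}\leq 2\pi\theta r$ combined with assumption~\ref{itm:h4} applied to $h_\rho$ gives $\int_0^{1/\theta} r^{n-1}\overline{\rho}(r)\,dr \leq C(1/\theta)^n \overline{\rho}(1/\theta)$, so this piece contributes at most $C\theta^{1-n}\overline{\rho}(1/\theta)$; on the outer piece $[1/\theta,\infty)$ I would integrate by parts (using differentiability of $\rho$ outside the origin, and compact support to kill the boundary at infinity), picking up a boundary term $f_\rho(1/\theta)\cos(2\pi)/(2\pi\theta) = \theta^{1-n}\overline{\rho}(1/\theta)/(2\pi)$ and an integral remainder dominated by $\int_{1/\theta}^\infty \abs{f_\rho'(r)}\,dr = f_\rho(1/\theta)$ thanks to $f_\rho$ being decreasing by~\ref{itm:h1}. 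Altogether this yields $\abs{I(\theta)} \leq C \theta^{1-n}\overline{\rho}(1/\theta)$ for $\theta > 1/\epsilon$.

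Next I would split the angular integration over $\Sn_+$ at $z_1 = 1/(\abs{\xi}\epsilon)$. On the region $z_1 > 1/(\abs{\xi}\epsilon)$ the above pointwise bound applies, and the monotonicity of $f_\rho$ (from~\ref{itm:h1}) gives, for $z_1 \leq 1$, the key inequality $z_1^{2-n} \overline{\rho}(1/(\abs{\xi} z_1)) \leq \overline{\rho}(1/\abs{\xi})$, so this region contributes at most $C\overline{\rho}(1/\abs{\xi})/\abs{\xi}^n$ after integration. The complementary region $z_1 \leq 1/(\abs{\xi}\epsilon)$ is empty for $n=1$; for $n\geq 2$, where the oscillatory structure is lost, I would use only the crude estimate $\abs{I(\theta)}\leq C\theta$ (from $\abs{\sin}\leq 2\pi\theta r$ and the compact support of $\rho$) together with the elementary spherical bound $\int_{\Sn_+\cap\{z_1\leq a\}}z_1^2\,d\mc{H}^{n-1}(z)\leq C a^3$, which yields a contribution of order $\abs{\xi}^{-3}$.

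The last and most delicate step is absorbing this $\abs{\xi}^{-3}$ into $C\overline{\rho}(1/\abs{\xi})/\abs{\xi}^n$. For this I would again invoke~\ref{itm:h1}: the monotonicity of $f_\rho$ on $(0,\infty)$ gives $f_\rho(1/\abs{\xi}) \geq f_\rho(\epsilon) =: \kappa > 0$ whenever $\abs{\xi}\geq 1/\epsilon$, equivalently $\overline{\rho}(1/\abs{\xi})\geq \kappa\abs{\xi}^{n-2}$, and therefore $\overline{\rho}(1/\abs{\xi})/\abs{\xi}^n\geq \kappa\abs{\xi}^{-2}$, which dominates $\abs{\xi}^{-3}$ as soon as $\abs{\xi}\geq 1/\kappa$. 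The residual compact range $\abs{\xi}\in[1/\epsilon, 1/\kappa]$ (if non-empty) is handled by continuity of $\widehat{Q}_\rho$ and strict positivity and continuity of $\overline{\rho}(1/\abs{\xi})/\abs{\xi}^n$ there, up to enlarging the constant. The hardest conceptual point is precisely this near-equator contribution: it admits no useful oscillatory cancellation, and only the combination of its shrinking angular measure with the $z_1^2$ weight makes the resulting error of order $\abs{\xi}^{-3}$, which is just barely compatible with the natural lower bound on $\overline{\rho}(1/\abs{\xi})/\abs{\xi}^n$ supplied by~\ref{itm:h1}.
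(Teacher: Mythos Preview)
Your proposal is correct and follows the same overall architecture as the paper's proof: start from the oscillatory formula \eqref{eq:hatQcoarea}, establish the pointwise bound $I(\theta)\le C\theta^{-1}f_\rho(1/\theta)$ for $\theta>1/\epsilon$, then split the angular integration at $z_1\abs{\xi}=1/\epsilon$. The outer radial piece $[1/\theta,\infty)$ is handled identically in both proofs via one integration by parts and the monotonicity of $f_\rho$.

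Two technical choices differ. For the inner radial piece $[0,1/\theta]$ the paper does not use $\abs{\sin}\le 2\pi\theta r$ directly; instead it rewrites the integral as $\int_0^{1/(2\theta)}\bigl(f_\rho(r)-f_\rho(r+\tfrac{1}{2\theta})\bigr)\sin(2\pi\theta r)\,dr$ and controls the difference by splitting $f_\rho=h_\rho/r^{1+\gamma}$ and invoking \ref{itm:h4}. Your route is more direct and equally valid: $\abs{\sin}\le 2\pi\theta r$ together with the almost-increase of $h_\rho$ gives the same $C\theta^{1-n}\overline{\rho}(1/\theta)$ bound without the pairing trick. For the near-equator region $\{z_1\abs{\xi}\le 1/\epsilon\}$ the paper bounds the integrand pointwise by $C/\abs{\xi}$ and multiplies by the measure of the band, whereas you keep the extra factor of $z_1$ in the estimate $z_1 I(\abs{\xi}z_1)\le C\abs{\xi}z_1^2$ and integrate $z_1^2$ over the band; your bookkeeping here is in fact cleaner and yields the $\abs{\xi}^{-3}$ contribution transparently.

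One small point: you write $f_\rho(1/\abs{\xi})\ge f_\rho(\epsilon)=:\kappa>0$, but since $\rho$ has compact support nothing prevents $\overline{\rho}(\epsilon)=0$. Simply replace $\epsilon$ by any $\epsilon'\in(0,\epsilon)$ (so that $\overline{\rho}(\epsilon')>0$ by \ref{itm:h0}) and the absorption step goes through unchanged.
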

\begin{proof}
For $\theta > 1/\epsilon$ and $0 < r < \frac{1}{2 \theta}$ we compute, thanks to \ref{itm:h4},
\begin{align*}
 f_\rho(r)-f_\rho\left(r+\frac{1}{2\theta}\right) 
 & = h_\rho(r)\left(\frac{1}{r^{1+\gamma}}-\frac{1}{\left(r+\frac{1}{2\theta}\right)^{1+\gamma}}\right)
 + \left(h_\rho(r)-h_\rho\left(r+\frac{1}{2\theta}\right)\right)\frac{1}{\left(r+\frac{1}{2\theta}\right)^{1+\gamma}} \\
 & \leq C h_\rho\left(\frac{1}{2\theta}\right)  \left(\frac{1}{r^{1+\gamma}}-\frac{1}{\left(r+\frac{1}{2\theta}\right)^{1+\gamma}}\right) 
 + C \frac{h_\rho\left(r+\frac{1}{2\theta}\right)}{\left(r+\frac{1}{2\theta}\right)^{1+\gamma}} \\
 & = C \frac{1}{(2 \theta)^{1 + \gamma}} f_\rho\left(\frac{1}{2\theta}\right)  \left(\frac{1}{r^{1+\gamma}}-\frac{1}{\left(r+\frac{1}{2\theta}\right)^{1+\gamma}}\right) 
 + C f_\rho\left(r+\frac{1}{2\theta}\right) .
\end{align*}
Consequently,
\begin{align*}
 \left( f_\rho(r)-f_\rho\left(r+\frac{1}{2\theta}\right) \right) & \sin(2\pi\theta r) \\ 
 & \leq \frac{C}{\theta^{1 + \gamma}} f_\rho\left(\frac{1}{2\theta}\right)  \left(\frac{1}{r^{1+\gamma}}-\frac{1}{\left(r+\frac{1}{2\theta}\right)^{1+\gamma}}\right) \sin(2\pi\theta r) 
 + C f_\rho\left(r+\frac{1}{2\theta}\right) .
\end{align*}
Since, as in \eqref{eq:oscillatory2},
\[
 \int_{0}^{\frac{1}{2\theta}}  \left(\frac{1}{r^{1+\gamma}}-\frac{1}{\left(r+\frac{1}{2\theta}\right)^{1+\gamma}}\right) \sin(2\pi\theta r) \, dr = 
 \int_{0}^{\frac{1}{\theta}} \frac{1}{r^{1+\gamma}} \sin(2\pi\theta r) \, dr \leq C \theta \int_{0}^{\frac{1}{\theta}} \frac{1}{r^{\gamma}} dr \leq C \theta^{\gamma}
\]
and, by \ref{itm:h1},
\[
 \int_{0}^{\frac{1}{2\theta}}   f_\rho\left(r+\frac{1}{2\theta}\right) \, dr \leq \frac{1}{2\theta} f_\rho\left(\frac{1}{2\theta}\right) ,
\]
we conclude that
\begin{equation*}
 \int_{0}^{\frac{1}{\theta}} f_\rho(r) \sin(2\pi\theta r)\,dr = \int_{0}^{\frac{1}{2\theta}} \left(f_\rho(r)-f_\rho\left(r+\frac{1}{2\theta}\right)\right)\sin(2\pi\theta r)\,dr \leq \frac{C}{\theta} f_\rho\left(\frac{1}{2\theta}\right).
\end{equation*}
Furthermore, with integration by parts we find
\begin{align*}
\int_{\frac{1}{\theta}}^{\infty} f_\rho(r)\sin(2\pi\theta r)\,dr &= \frac{1}{2\pi\theta}f_\rho\left(\frac{1}{\theta}\right)+\frac{1}{2\pi\theta}\int_{\frac{1}{\theta}}^\infty \frac{d}{dr}f_\rho(r)\cos(2 \pi \abs{\xi} r z_1)\,dr\\
& \leq \frac{C}{\theta} \left(f_\rho\left(\frac{1}{\theta}\right)+\int_{\frac{1}{\theta}}^{\infty} -\frac{d}{dr}f_\rho(r)\,dr\right) = \frac{C}{\theta} f_\rho\left(\frac{1}{\theta}\right)\leq \frac{C}{\theta} f_\rho\left(\frac{1}{2\theta}\right).
\end{align*}
Summing the previous two inequalities, we find that for all $\theta >1/\epsilon$,
\begin{equation}\label{eq:theta1epsilon}
\int_0^\infty f_\rho(r)\sin(2 \pi \theta r)\,dr \leq \frac{C}{\theta} f_\rho\left(\frac{1}{2\theta}\right).
\end{equation}
On the other hand, recalling  \eqref{eq:hatQcoarea}, we obtain that for $\xi \not = 0$,
\begin{align*}
\pi\abs{\xi}\widehat{Q}_\rho(\xi)&= \int_{\Sn_+} z_1 \int_0^\infty f_\rho (r)\sin(2 \pi \abs{\xi} r z_1)\,dr\,d \mc{H}^{n-1} (z)\\
&\leq \int_{\Sn_+ \cap \{z_1\abs{\xi} \leq 1/\epsilon\}} \frac{1}{\abs{\xi}\epsilon}\int_0^\infty f_\rho(r)\sin(2 \pi \abs{\xi} r z_1)\,dr\,d \mc{H}^{n-1} (z) \\
 &\qquad + \int_{\Sn_+ \cap \{z_1\abs{\xi} > 1/\epsilon\}} z_1\int_0^\infty f_\rho(r)\sin(2 \pi \abs{\xi} r z_1)\,dr\,d \mc{H}^{n-1} (z) .
\end{align*}
For the first term of the right-hand side, we use the inequality $\sin t \leq t$ (for $t>0$) as well as \eqref{eq:H0corbarrho} and the compact support of $f_\rho$, to obtain that when $0 < z_1\abs{\xi} \leq 1/\epsilon$,
\[
 \frac{1}{\abs{\xi}\epsilon}\int_0^\infty f_\rho(r)\sin(2 \pi \abs{\xi} r z_1)\,dr \leq  C \frac{1}{\abs{\xi}\epsilon}\abs{\xi}z_1 \leq  C \frac{1}{\abs{\xi}}.
\]
Hence, using that $ \mc{H}^{n-1}  (\Sn_+ \cap \{z_1\abs{\xi} \leq 1/\epsilon\}) \leq C / \abs{\xi}^{n-1}$, we have, for $\xi \in B_{1/\epsilon}^c$,
\[
  \int_{\Sn_+ \cap \{z_1\abs{\xi} \leq 1/\epsilon\}} \frac{1}{\abs{\xi}\epsilon}\int_0^\infty f_\rho(r)\sin(2 \pi \abs{\xi} r z_1)\,dr\,d \mc{H}^{n-1} (z) \leq \frac{C}{\abs{\xi}^n} \leq C \frac{\overline{\rho}(1/\abs{\xi})}{\abs{\xi}^{n-1}} , 
\]
where in the last equality we have used the assumption $\inf_{B_{\e}} \rho > 0$ of \ref{itm:h0}.
For the second term, we apply \eqref{eq:theta1epsilon} to obtain that, when $z_1\abs{\xi} > 1/\epsilon$,
\begin{equation*}
\begin{split}
 z_1\int_0^\infty f_\rho(r)\sin(2 \pi \abs{\xi} r z_1)\,dr &\leq \frac{C}{\abs{\xi}}f_\rho\left(\frac{1}{2\abs{\xi}z_1}\right) \leq \frac{C}{\abs{\xi}}f_\rho\left(\frac{1}{2 \abs{\xi}}\right) \\
 & =  \frac{C}{\abs{\xi}^{n-1}} \overline{\rho} \left(\frac{1}{2 \abs{\xi}}\right)\leq\frac{C}{\abs{\xi}^{n-1}} \overline{\rho} \left(\frac{1}{\abs{\xi}}\right),
 \end{split}
\end{equation*}
where we have used that $f_{\rho}$ is decreasing and, for the last inequality, \ref{itm:h4}.
The proof is concluded.
\end{proof}

As a consequence of this lemma, we may prove embeddings of our nonlocal spaces for different kernels.

\begin{theorem}\label{th:comparison}
Let $\O \subset \Rn$ be open.
Let $\rho_1,\rho_2$ have compact support and satisfy \ref{itm:h0}--\ref{itm:h1}; let $\rho_1$ satisfy \ref{itm:h4}. Assume $\liminf_{t \downarrow 0} \overline{\rho_2}(t)/\overline{\rho_1}(t)>0$ and some of the following:
\begin{enumerate}[label=(\roman*)]
\item\label{itm:comparison1} $p \in (1,\infty)$, $\rho_1,\rho_2$ satisfy \ref{itm:h2} and $\liminf_{t \downarrow0} t^{n-1}\overline{\rho_1}(t)>0$.

\item\label{itm:comparison2} $p=2$ and $\rho_1$ is differentiable outside the origin.
\end{enumerate}
Then, the continuous inclusion $H^{\rho_2,p}_0 (\O) \subset H^{\rho_1,p}_0 (\O)$ holds and there is a $C=C(n,\rho_1,\rho_2)>0$ such that
\[
\norm{D_{\rho_1} u}_{L^p(\R^n,\R^n)} \leq C \norm{D_{\rho_2} u}_{L^p(\R^n,\R^n)} \quad \text{for all } u \in H^{\rho_2,p}_0 (\O) .
\]
\end{theorem}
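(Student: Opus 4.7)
The core idea is to express the two nonlocal gradients in Fourier space and treat the ratio of the symbols as a Fourier multiplier. Since $\rho_1, \rho_2$ have compact support, we have $\rho_i \in L^1(\R^n)$, and Proposition \ref{pr:Qhat}\,\ref{item:Qhat2} gives $\widehat{\mathcal{G}_{\rho_i}\phi}(\xi) = 2\pi i\xi\,\widehat{Q}_{\rho_i}(\xi)\widehat{\phi}(\xi)$ for $\phi\in C_c^\infty(\R^n)$. Lemma \ref{le:Qhatpositive} guarantees $\widehat{Q}_{\rho_2}(\xi)>0$ for all $\xi\in\R^n$ (it is positive near the origin since $\widehat{Q}_{\rho_2}(0)=\|Q_{\rho_2}\|_{L^1}>0$, and positive at infinity by the quoted lower bound), so the ratio
\[
m(\xi):=\frac{\widehat{Q}_{\rho_1}(\xi)}{\widehat{Q}_{\rho_2}(\xi)}
\]
is a well-defined continuous function on $\R^n$, and by Lemma \ref{le:lamda} we have the pointwise identity $\widehat{\mathcal{G}_{\rho_1}\phi}(\xi)=m(\xi)\widehat{\mathcal{G}_{\rho_2}\phi}(\xi)$.

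The first key step is to show $m\in L^\infty(\R^n)$. For $|\xi|$ bounded this follows from continuity together with the positivity of $\widehat{Q}_{\rho_2}$. For $|\xi|\geq 2/\epsilon$, I combine the upper bound of Lemma \ref{le:qhatupper} applied to $\rho_1$ (which is legitimate because $\rho_1$ satisfies \ref{itm:h0}, \ref{itm:h1}, \ref{itm:h4} and is differentiable outside the origin --- either by hypothesis in case (ii), or via the smoothness of $f_{\rho_1}$ forced by \ref{itm:h2} in case (i)) with the lower bound of Lemma \ref{le:Qhatpositive} applied to $\rho_2$, giving
\[
m(\xi)\leq C\,\frac{\overline{\rho_1}(1/|\xi|)}{\overline{\rho_2}(1/|\xi|)},
\]
and the hypothesis $\liminf_{t\downarrow 0}\overline{\rho_2}(t)/\overline{\rho_1}(t)>0$ bounds this uniformly.

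In case (ii), Plancherel's theorem plus boundedness of $m$ yields
\[
\|\mathcal{G}_{\rho_1}\phi\|_{L^2}=\|m\,\widehat{\mathcal{G}_{\rho_2}\phi}\|_{L^2}\leq \|m\|_\infty \|\mathcal{G}_{\rho_2}\phi\|_{L^2}
\]
for every $\phi\in C_c^\infty(\R^n)$. In case (i), I have to check that $m$ satisfies the Mihlin--H\"ormander condition $|\partial^\alpha m(\xi)|\leq C_\alpha|\xi|^{-|\alpha|}$ for $\xi\neq 0$. Near the origin, $\widehat{Q}_{\rho_1}$ and $\widehat{Q}_{\rho_2}$ are analytic with $\widehat{Q}_{\rho_2}(0)>0$, so $m$ is smooth and bounded together with its derivatives, which trivially gives the required bound (the weight $|\xi|^{|\alpha|}$ absorbs any constant for $|\alpha|\geq 1$). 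For $|\xi|\geq 1$, I follow the Fa\`a di Bruno/Leibniz argument of the proof of Theorem \ref{th:lpbound}: Lemma \ref{le:Qhatderivdecay} applied to $\rho_1$ and $\rho_2$ combined with the lower bound $\widehat{Q}_{\rho_i}(\xi)\geq C|\xi|^{-1}$ (valid for $i=1$ directly and for $i=2$ via $\overline{\rho_2}\gtrsim\overline{\rho_1}$ and $\liminf t^{n-1}\overline{\rho_1}(t)>0$) yields
\[
|\partial^\beta \widehat{Q}_{\rho_1}(\xi)|\leq C_\beta|\xi|^{-|\beta|}\widehat{Q}_{\rho_1}(\xi),\qquad |\partial^\gamma \widehat{Q}_{\rho_2}^{-1}(\xi)|\leq C_\gamma|\xi|^{-|\gamma|}\widehat{Q}_{\rho_2}^{-1}(\xi),
\]
and Leibniz then produces $|\partial^\alpha m(\xi)|\leq C_\alpha|\xi|^{-|\alpha|}m(\xi)\leq C_\alpha\|m\|_\infty|\xi|^{-|\alpha|}$. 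The Mihlin--H\"ormander multiplier theorem (\cite[Th.\ 6.2.7]{Gra14a}) then provides $\|\mathcal{G}_{\rho_1}\phi\|_{L^p}\leq C\|\mathcal{G}_{\rho_2}\phi\|_{L^p}$.

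Finally, I upgrade the inequality from $C_c^\infty(\R^n)$ to $H^{\rho_2,p}_0(\Omega)$ by density (Theorem \ref{th:density}\,\ref{item:density1}): approximate $u\in H^{\rho_2,p}_0(\Omega)$ by $\phi_j\in C_c^\infty(\R^n)$ with $\phi_j\to u$ and $\mathcal{G}_{\rho_2}\phi_j\to D_{\rho_2}u$ in $L^p$; the uniform bound makes $\mathcal{G}_{\rho_1}\phi_j$ Cauchy in $L^p$, and its limit is $D_{\rho_1}u$ by distributional continuity, giving $u\in H^{\rho_1,p}_0(\Omega)$ together with the stated estimate. The main obstacle is verifying the derivative bounds on $m$ uniformly on $\R^n$; the analysis around $|\xi|=1$ where the two asymptotic regimes meet is the delicate point, but is handled by the Fa\`a di Bruno bookkeeping already used in the proof of Theorem \ref{th:lpbound}.
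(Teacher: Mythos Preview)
Your proposal is correct and follows essentially the same approach as the paper: define $m=\widehat{Q}_{\rho_1}/\widehat{Q}_{\rho_2}$, bound it via Lemma~\ref{le:qhatupper} (for $\rho_1$) against Lemma~\ref{le:Qhatpositive} (for $\rho_2$) combined with the ratio hypothesis, invoke Plancherel for $p=2$ or Mihlin--H\"ormander using Lemma~\ref{le:Qhatderivdecay} plus the Fa\`a di Bruno/Leibniz bookkeeping from Theorem~\ref{th:lpbound} for general $p$, and close with the density Theorem~\ref{th:density}\,\ref{item:density1}. Your remark that the differentiability of $\rho_1$ required by Lemma~\ref{le:qhatupper} is supplied in case~(i) by the smoothness in \ref{itm:h2} is exactly the observation the paper relies on implicitly.
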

\begin{proof}
Let $\phi \in C^{\infty}_c (\O)$.
By Proposition \ref{pr:Qhat} \ref{item:Qhat2} and Lemma \ref{le:Qhatpositive}, we can write 
\[
\widehat{\Gcal_{\rho_1} \phi} = \frac{\widehat{Q}_{\rho_1}}{\widehat{Q}_{\rho_2}}\widehat{\Gcal_{\rho_2} \phi}.
\]
Therefore, if 
\[
m:= \frac{\widehat{Q}_{\rho_1}}{\widehat{Q}_{\rho_2}}
\]
is an $L^p$ Fourier multiplier, the result follows readily from a density argument based on Theorem \ref{th:density} \ref{item:density1}.
In the case $p=2$ (part \ref{itm:comparison2}), we only have to check that $m$ is bounded, while for general $p$ (part \ref{itm:comparison1}), we shall see that $m$ satisfies the hypotheses of the Mihlin-H\"{o}rmander theorem (cf.~\cite[Th.\ 6.2.7]{Gra14a}).

To show that $m$ is bounded, we invoke Lemma~\ref{le:qhatupper} for $\rho_1$ and Lemma~\ref{le:Qhatpositive} for $\rho_2$ to obtain for all $\xi \in B_{2/\epsilon}^c$,
\[
 m (\xi) \leq  C \frac{\overline{\rho_1}(1/\abs{\xi})}{\overline{\rho_2}(1/\abs{\xi})} \leq C ,
\]
where in the last inequality we have used $\liminf_{t \downarrow 0} \overline{\rho_2}(t)/\overline{\rho_1}(t)>0$.
The fact that $m$ is bounded in $B_{2/\epsilon}$ is a consequence of Lemma~\ref{le:Qhatpositive}.
This completes the proof under assumption \ref{itm:comparison2}.

For $p \in (1, \infty)$, to check the hypotheses of the Mihlin-H\"{o}rmander theorem we need to verify that
\[
\abs{\partial^{\alpha} m(\xi)} \leq C_{\alpha} \abs{\xi}^{-\abs{\alpha}},
\]
for all multiindices $\alpha \in \N^n$ with $\abs{\alpha} \leq n/2+1$. We note that this condition holds for $\xi \in B_1$, since both $\widehat{Q}_{\rho_1}$ and $\widehat{Q}_{\rho_2}$ are smooth and $\widehat{Q}_{\rho_2}$ is positive. Furthermore, the assumptions imply
\[
 \liminf_{t \downarrow0} t^{n-1}\overline{\rho_1}(t)>0, \qquad \liminf_{t \downarrow0} t^{n-1}\overline{\rho_2}(t)>0 ,
\]
so we can use \eqref{eq:hatQxi-1} as well as Lemma~\ref{le:Qhatderivdecay} to obtain that, for any $\beta \in \N^n$ and $\xi \in B_1^c$,
\[
\abs{\partial^{\beta} \widehat{Q}_{\rho_1}(\xi)} \leq C_\beta \abs{\xi}^{-\abs{\beta}} \abs{\widehat{Q}_{\rho_1}(\xi)} \quad \text{and} \quad \abs{\partial^{\beta} \widehat{Q}_{\rho_2}(\xi)} \leq C_\beta \abs{\xi}^{-\abs{\beta}} \abs{\widehat{Q}_{\rho_2}(\xi)} .
\]
A straightforward yet tedious calculation now shows that for $\abs{\xi} \geq 1$
\begin{equation*}\label{eq:redcution}
\abslr{\partial^{\alpha} \left(\frac{\widehat{Q}_{\rho_1}}{\widehat{Q}_{\rho_2}}\right)(\xi)} \leq C_\alpha\abs{\xi}^{-\abs{\alpha}}\abslr{\frac{\widehat{Q}_{\rho_1}(\xi)}{\widehat{Q}_{\rho_2}(\xi)}}=C_\alpha\abs{\xi}^{-\abs{\alpha}}m(\xi) \leq C_\alpha\abs{\xi}^{-\abs{\alpha}},
\end{equation*}
where in the last inequality we use that $m$ is bounded. This completes the proof under assumption \ref{itm:comparison1}.
\end{proof}

%

\begin{example}
Let $\O \subset \Rn$ be open, $s \in (0,1)$ and $p \in (1, \infty)$.
As in Examples \ref{ex:H4} \ref{itm:exH4a}--\ref{itm:exH4c}, consider $\rho_1$ as in \eqref{eq:truncatedRiesz}, $\rho_2$ as in \eqref{eq:truncatedlog} and $\rho_3$ as in \eqref{eq:truncatedlogdown}.
Then $H^{\rho_2,p}_0(\Omega) \subset H^{\rho_1,p}_0(\Omega) \subset H^{\rho_3,p}_0(\Omega)$.
Moreover, if $\rho'_2$ is as in \eqref{eq:truncatedlog} but for a exponent $s' \in (0, s)$, then $H^{\rho_3,p}_0(\Omega) \subset H^{\rho'_2,p}_0(\Omega)$.
\end{example}

Of course, changing the roles of $\rho_1$ and $\rho_2$ in Theorem \ref{th:comparison} gives rise to a criterion of equality of spaces, which complements that of Proposition \ref{prop:carryover}.

\begin{corollary}\label{th:comparison2}
Let $\O \subset \Rn$ be open.
Let $\rho_1,\rho_2$ have compact support and satisfy \ref{itm:h0}--\ref{itm:h1} and \ref{itm:h4}.
Assume
\[
 0 < \liminf_{t \downarrow 0} \frac{\overline{\rho_2}(t)}{\overline{\rho_1}(t)} \leq \limsup_{t \downarrow 0} \frac{\overline{\rho_2}(t)}{\overline{\rho_1}(t)} < \infty
\]
and some of the following:
\begin{enumerate}[label=(\roman*)]
\item $p \in (1,\infty)$, $\rho_1,\rho_2$ satisfy \ref{itm:h2} and $\liminf_{t \downarrow0} t^{n-1}\overline{\rho_1}(t)>0.$

\item $p=2$ and $\rho_1, \rho_2$ are differentiable outside the origin.
\end{enumerate}
Then $H^{\rho_1,p}_0(\Omega) = H^{\rho_2,p}_0(\Omega)$ and there is a $C=C(n,p, \rho_1,\rho_2)>0$ such that
\[
  \frac{1}{C} \norm{D_{\rho_2} u}_{L^p(\R^n,\R^n)} \leq \norm{D_{\rho_1} u}_{L^p(\R^n,\R^n)} \leq C \norm{D_{\rho_2} u}_{L^p(\R^n,\R^n)} \quad \text{for all } u \in H^{\rho_1,p}_0(\Omega) .
\]
\end{corollary}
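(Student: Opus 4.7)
The plan is to derive Corollary~\ref{th:comparison2} as a direct application of Theorem~\ref{th:comparison}, applied twice with the roles of $\rho_1$ and $\rho_2$ interchanged. The hypothesis that $\overline{\rho_2}/\overline{\rho_1}$ is both bounded below and above near zero is exactly the symmetric strengthening of the one-sided asymptotic condition in Theorem~\ref{th:comparison}, since $\limsup_{t\downarrow 0}\overline{\rho_2}(t)/\overline{\rho_1}(t) < \infty$ is equivalent to $\liminf_{t\downarrow 0}\overline{\rho_1}(t)/\overline{\rho_2}(t) > 0$.

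First, applying Theorem~\ref{th:comparison} with the pair $(\rho_1,\rho_2)$ in the roles given by the statement yields $H^{\rho_2,p}_0(\Omega) \subset H^{\rho_1,p}_0(\Omega)$ together with the estimate
\[
\norm{D_{\rho_1} u}_{L^p(\R^n,\R^n)} \leq C \norm{D_{\rho_2} u}_{L^p(\R^n,\R^n)} \quad \text{for all } u \in H^{\rho_2,p}_0(\Omega).
\]
To invoke Theorem~\ref{th:comparison} in the reverse direction, I would verify its hypotheses with $\rho_2$ now playing the role of $\rho_1$. The conditions \ref{itm:h0}, \ref{itm:h1} (and \ref{itm:h2} in case (i), or differentiability in case (ii)) are symmetric in the two kernels and hence available. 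The condition \ref{itm:h4} on the new ``$\rho_1$'' (which is $\rho_2$ here) is assumed in Corollary~\ref{th:comparison2} (note the change from Theorem~\ref{th:comparison}, where \ref{itm:h4} is only required on $\rho_1$; symmetry in the corollary forces us to impose it on both). The asymptotic hypothesis becomes $\liminf_{t\downarrow 0}\overline{\rho_1}(t)/\overline{\rho_2}(t) > 0$, which is precisely $\limsup_{t\downarrow 0}\overline{\rho_2}(t)/\overline{\rho_1}(t) < \infty$.

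The only subtle point is the condition $\liminf_{t \downarrow 0} t^{n-1}\overline{\rho}_{1}(t) > 0$ in case (i), which must now be verified for $\rho_2$ instead. This follows immediately from the two hypotheses
\[
\liminf_{t \downarrow 0} t^{n-1}\overline{\rho_1}(t)>0 \qquad \text{and} \qquad \liminf_{t\downarrow 0}\frac{\overline{\rho_2}(t)}{\overline{\rho_1}(t)} > 0,
\]
by multiplying the two liminfs. Thus Theorem~\ref{th:comparison} applies in the reverse direction and produces $H^{\rho_1,p}_0(\Omega) \subset H^{\rho_2,p}_0(\Omega)$ with the estimate $\norm{D_{\rho_2} u}_{L^p} \leq C'\norm{D_{\rho_1} u}_{L^p}$. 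Combining the two one-sided inclusions and inequalities yields the equality of spaces and the two-sided norm equivalence claimed. There is no real obstacle here beyond carefully checking that the symmetric hypotheses on \ref{itm:h4} and on the lower bound of $t^{n-1}\overline{\rho}$ suffice to reapply Theorem~\ref{th:comparison} after swapping the kernels.
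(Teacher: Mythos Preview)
Your proposal is correct and follows exactly the paper's intended approach: the corollary is stated immediately after Theorem~\ref{th:comparison} with the remark that ``changing the roles of $\rho_1$ and $\rho_2$'' yields the equality of spaces, and your verification that the swapped hypotheses (in particular \ref{itm:h4} for $\rho_2$ and $\liminf_{t\downarrow 0} t^{n-1}\overline{\rho_2}(t)>0$ in case (i)) are all available is precisely what is needed.
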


We finish this section by showing a partial converse of Theorem~\ref{th:l2bound}.


\begin{proposition}\label{pr:Poincareconverse}
Let $\Omega \subset \R^n$ be open and bounded.
Let $\rho$ have compact support, be differentiable outside the origin and satisfy \ref{itm:h0}, \ref{itm:h1} and \ref{itm:h4}.
Then, the following two statements hold:
\begin{enumerate}[label = (\roman*)]
\item\label{item:Poincareconverse1} If there is a $C=C(\Omega,n,\rho)>0$ such that
\[
\norm{u}_{L^2(\Omega)} \leq C \norm{D_\rho u}_{L^2(\R^n,\R^n)} \quad \text{for all $u \in H^\rho_0(\Omega)$},
\]
then $\limsup_{t \downarrow0} t^{n-1}\overline{\rho}(t)>0$.

\item\label{item:Poincareconverse2} If $H^\rho_0(\Omega)$ is compactly embedded into $L^2(\R^n)$, then $\limsup_{t \downarrow 0}t^{n-1}\overline{\rho}(t)=\infty$.
\end{enumerate}
\end{proposition}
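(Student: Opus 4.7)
The plan is to prove both statements by contrapositive via a common family of high-frequency test functions. Fix once and for all a nonzero $\phi \in C_c^{\infty}(\Omega)$ and consider, for sequences $(\xi_k) \subset \R^n$ with $|\xi_k| \to \infty$, the functions
\[
u_k(x) = \phi(x)\cos(2\pi \xi_k \cdot x) \in C_c^{\infty}(\Omega) \subset H^\rho_0(\Omega).
\]
By Riemann--Lebesgue applied to $\phi^2 \in L^1(\R^n)$ one has
\[
\|u_k\|_{L^2}^2 = \tfrac12\|\phi\|_{L^2}^2 + \tfrac12 \int \phi^2(x) \cos(4\pi \xi_k \cdot x)\,dx \longrightarrow \tfrac12\|\phi\|_{L^2}^2 > 0.
\]
Moreover, since $\phi$ has compact support, $\phi v \in L^1(\R^n)$ for every $v \in L^2(\R^n)$, so a second application of Riemann--Lebesgue yields $u_k \weakc 0$ in $L^2(\R^n)$.

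To analyze the nonlocal gradient, Proposition~\ref{pr:Qhat}\,\ref{item:Qhat2} and Plancherel give
\[
\|\Gcal_\rho u_k\|_{L^2(\R^n,\R^n)}^2 = 4\pi^2 \int_{\R^n} |\xi|^2 \widehat{Q}_\rho(\xi)^2 |\widehat{u_k}(\xi)|^2\,d\xi.
\]
Expanding $\widehat{u_k}(\xi) = \tfrac12(\widehat\phi(\xi - \xi_k) + \widehat\phi(\xi + \xi_k))$, using $(a+b)^2 \leq 2(a^2+b^2)$, and translating reduces the right-hand side to a sum of integrals of the form $\int_{\R^n} |\eta \pm \xi_k|^2 \widehat{Q}_\rho(\eta \pm \xi_k)^2 |\widehat\phi(\eta)|^2\,d\eta$. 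The crucial input is Lemma~\ref{le:qhatupper}, which yields for $|\xi| \geq 1/\epsilon$,
\[
|\xi|\,\widehat{Q}_\rho(\xi) \leq C\, \tau^{n-1}\overline\rho(\tau) \quad\text{with } \tau = 1/|\xi|,
\]
and combines with the continuity and boundedness of $\widehat{Q}_\rho$ on $\R^n$ (from $Q_\rho \in L^1(\R^n)$ by Lemma~\ref{le:Q}\,\ref{item:Q3}) to control $|\xi|^2\widehat{Q}_\rho(\xi)^2$ globally.

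For part~\ref{item:Poincareconverse1}, the contrapositive assumption $\lim_{t\downarrow 0} t^{n-1}\overline\rho(t) = 0$ forces $|\xi|\,\widehat{Q}_\rho(\xi) \to 0$ as $|\xi|\to\infty$, so $|\xi|^2\widehat Q_\rho(\xi)^2$ is bounded on $\R^n$ and vanishes at infinity. The integrand in the reduced formula then converges pointwise to $0$ as $|\xi_k|\to\infty$ and is dominated by $C|\widehat\phi(\eta)|^2 \in L^1(\R^n)$, so dominated convergence yields $\|\Gcal_\rho u_k\|_{L^2} \to 0$, contradicting the assumed Poincar\'e inequality. For part~\ref{item:Poincareconverse2}, the assumption $\limsup_{t\downarrow 0} t^{n-1}\overline\rho(t) < \infty$ only gives $|\xi|^2\widehat Q_\rho(\xi)^2 \leq M$ globally, so $(u_k)$ is bounded in $H^\rho_0(\Omega)$; compactness of the embedding into $L^2(\R^n)$ would then produce a subsequence converging strongly in $L^2$, necessarily to the weak limit $0$, contradicting $\|u_k\|_{L^2}^2 \to \tfrac12\|\phi\|_{L^2}^2 > 0$.

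The conceptual core and main obstacle is the sharp upper bound on $|\xi|\,\widehat Q_\rho(\xi)$ for large $|\xi|$, i.e.\ Lemma~\ref{le:qhatupper}, which is precisely what necessitates the extra hypothesis that $\rho$ be differentiable outside the origin and satisfy~\ref{itm:h4}. Once this bound is in place the remaining steps---Plancherel, Riemann--Lebesgue, dominated convergence, and a weak--strong compactness argument---are routine.
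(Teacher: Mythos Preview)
Your proof is correct. Both parts hinge, as you note, on Lemma~\ref{le:qhatupper}, and your explicit high-frequency test functions $u_k = \phi\cos(2\pi\xi_k\cdot x)$ together with Plancherel, Riemann--Lebesgue and dominated convergence do the job cleanly.

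The paper takes a different, more abstract route after invoking the same lemma. For part~\ref{item:Poincareconverse1}, instead of building a witnessing sequence, it observes that $\lambda_\rho(\xi)=2\pi i\xi\widehat{Q}_\rho(\xi)$ is bounded and vanishes at infinity, so by a Fourier-space Fr\'echet--Kolmogorov criterion the operator $D_\rho:L^2(\Omega)\to L^2(\R^n,\R^n)$ itself is compact; a Poincar\'e inequality would then make a compact operator bounded below on an infinite-dimensional space, which is impossible. For part~\ref{item:Poincareconverse2}, it notes that boundedness of $\lambda_\rho$ makes $D_\rho$ bounded on $L^2(\R^n)$, so $H^\rho_0(\Omega)$ and $L^2(\Omega)$ coincide with equivalent norms and a compact embedding would force the identity on $L^2(\Omega)$ to be compact. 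Your approach is more concrete and self-contained (no need for the Pego compactness criterion); the paper's is shorter and exposes the structural reason---compactness or boundedness of $D_\rho$ as a Fourier multiplier---behind the failure.
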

\begin{proof}
Part \ref{item:Poincareconverse1}. We assume to the contrary that $\lim_{t \downarrow 0}t^{n-1} \overline{\rho}(t) = 0$.
Then, we obtain thanks to Lemma \ref{le:qhatupper} that the function $\lambda_\rho(\xi) = 2\pi i \xi \widehat{Q}_{\rho}(\xi)$ (see Proposition \ref{pr:Qhat}) is a bounded function with $\abs{\lambda_\rho(\xi)} \to 0$ as $\abs{\xi} \to \infty$.
Therefore, in light of Lemma~\ref{le:lamda}, we find
\[
 \lim_{R \to \infty} \sup_{\substack{u \in C^{\infty}_c (\R^n) \\ \norm{u}_{L^2 (\Rn)} \leq 1}} \norm{\widehat{\Gcal_{\rho} u}}_{L^2(B_R^c)} \leq \lim_{R\to \infty} \sup_{\abs{\xi} \geq R} \abs{\lambda_\rho(\xi)} =0,
\]
which implies by the Fr\'{e}chet-Kolmogorov criterion in Fourier space (cf.~\cite[Theorem~3]{Peg85}) and the compact support of $\rho$, that $D_{\rho}$ is a compact operator from $L^2(\O)$ (extended as zero in $\O^c$) to $L^2(\R^n,\R^n)$.
Therefore, a Poincar\'{e} inequality is not possible. \smallskip

Part \ref{item:Poincareconverse2}. If $\limsup_{t \downarrow 0} t^{n-1}\overline{\rho}(t)<\infty$, then the function $\lambda_\rho$ is bounded.
We infer that $D_\rho$ is a bounded operator from $L^2(\Rn)$ to $L^2(\Rn,\Rn)$, as a composition of the following bounded operators in $L^2$, initially defined for $u \in C^{\infty}_c (\Rn)$:
\[
 u \mapsto \widehat{u} \mapsto \lambda_{\rho} \widehat{u} \mapsto (\lambda_{\rho} \widehat{u})^\vee = \Gcal_{\rho} u ;
\]
see Lemma \ref{le:lamda}.
Since $D_\rho$ is bounded from $L^2(\Rn)$ to $L^2 (\R^n, \Rn)$, a compact embedding is not possible.
\end{proof}

\section*{Acknowledgements} 

J.C.B. has been supported by the Agencia Estatal de Investigaci\'on of the Spanish Ministry of Research and Innovation, through project PID2020-116207GB-I00.
C.M.-C. has been supported by the Agencia Estatal de Investigaci\'on of the Spanish Ministry of Research and Innovation, through projects and PID2021-124195NB-C32 (also H.S.) and the Severo Ochoa Programme for Centres of Excellence in R\&D CEX2019-000904-S (also H.S.), by the Madrid Government (Comunidad de Madrid, Spain) under the multiannual Agreement with UAM in the line for the Excellence of the University Research Staff in the context of the V PRICIT (Regional Programme of Research and Technological Innovation), and by the ERC Advanced Grant 834728. H.S. acknowledges funding for his research visit to Madrid in 2023 by the Bayerische Forschungsallianz, through the project BayIntAn\_KUEI\_2023\_16, and also acknowledges the hospitality of ICMAT (Instituto de Ciencias Matem\'{a}ticas), which hosted him during this period. In addition, H.S. thanks Adolfo Arroyo-Rabasa for the helpful discussions that led to the inclusion of Remark~\ref{rem:QrhoBessel}.

\bibliographystyle{abbrv}
\bibliography{FTOCbib}
\end{document}